\newcommand{\mysection}[1]{\section{#1}
      \setcounter{equation}{0}}
\newcommand\cbrk{\text{$]$\kern-.15em$]$}} 
\newcommand\opar{\text{\raise.2ex\hbox{${\scriptstyle | }$}\kern-.34em$($} }
\DeclareMathOperator*{\esssup}{\pi-ess\,sup}
\newtheorem{theorem}{Theorem}[section]
\newtheorem{lemma}[theorem]{Lemma}
\newtheorem{corollary}[theorem]{Corollary}
\theoremstyle{definition}
\newtheorem{assumption}{Assumption}[section]
\theoremstyle{remark}
\newtheorem{remark}{Remark}[section]
\newcommand\bL{\mathbb{L}}
\newcommand\bR{\mathbb{R}}
\newcommand\cB{\mathcal{B}}
\newcommand\cC{\mathcal{C}}
\newcommand\cF{\mathcal{F}}
\newcommand\cH{\mathcal{H}}
\newcommand\cL{\mathcal{L}}
\newcommand\cP{\mathcal{P}}
\newcommand\cS{\mathcal{S}}
\newcommand\cU{\mathcal{U}}
\newcommand\cV{\mathcal{V}}
\newcommand\cZ{\mathcal{Z}}
 \newcommand{\sumstar}
 {\operatornamewithlimits{\sum@\kern-.2em\raise1ex\hbox{*}}}
\renewcommand{\>}{\rangle}
\begin{document}

\author[I. Gy\"ongy]{Istv\'an Gy\"ongy}
\address{School of Mathematics and Maxwell Institute, 
University of Edinburgh, Scotland, United Kingdom.}
\email{i.gyongy@ed.ac.uk}

\author[S. Wu]{Sizhou Wu}
\address{School of Mathematics,
University of Edinburgh,
King's  Buildings,
Edinburgh, EH9 3JZ, United Kingdom}
\email{Sizhou.Wu@ed.ac.uk}

\keywords{It\^o formula, random measures, L\'evy processes}

\subjclass[2010]{Primary  	60H05, 60H15; Secondary 35R60}

\begin{abstract} We present an It\^o formula for the $L_p$-norm of 
jump processes having stochastic differentials in $L_p$-spaces. The main results extend 
well-known theorems of Krylov to the case of processes with jumps, and which can be used 
to prove existence and uniqueness theorems in $L_p$-spaces for SPDEs driven by L\'evy processes. 
\end{abstract}

\title[It\^o formula]{It\^o's formula for jump processes in $L_p$-spaces}

\maketitle

\mysection{Introduction}
It\^o formulas for semimartingales taking values in function spaces 
play important roles in the theory of stochastic partial differential equations (SPDEs).   
To get a priori estimates in the $L_2$-theory of SPDEs, driven by 
a cylindrical Wiener process 
$(w^1_t,w^2_t,...)_{t\in[0,T]}$, one usually needs a suitable formula for $|u_t|^2_H$, 
the square of $H$-valued solutions $(u_t)_{t\in[0,T]}$ to SPDEs, when $H$ is a Hilbert space.   
In the framework of $L_2$-theory 
there is a Banach space $V$, embedded continuously and densely in $H$, 
such that $u_t\in V$ for $P\otimes dt$-almost every 
$(\omega,t)\in\Omega\times[0,T]$, and from the definition of the solution it follows 
that for some processes $(v^{\ast}_t)_{t\in [0,T]}$ 
and $(g^r_t)_{t\in[0,T]}$, 
with values in $V^{\ast}$ and $H$, respectively, for $r=1,2,...$, 
\begin{equation}                                      \label{eq0}
du_t=v^{\ast}_t\,dt
+g^r_t\,dw^r_t
\quad\text{for $P\otimes dt$-a.e. $(\omega,t)\in\Omega\times[0,T]$,}
\end{equation} 
where  $V^{\ast}$ 
is the adjoint of $V$. 
(Here, and later on, the summation convention with respect 
to repeated integer valued indices is used, 
i.e., $(g^r_t,\varphi)\,dw^r_t$ means $\sum_r(g^r_t,\varphi)\,dw^r_t$.) 
A basic example for such couple of spaces  $V$ and $H$ 
is the couple of Hilbert spaces $W^1_2$ and $L_2$ of real functions 
defined on the whole Euclidean space $\bR^d$. 
In this case equation \eqref{eq0} can be rewritten 
as
\begin{equation}                                                                                        \label{eq01}
du_t=D_{\alpha}f^{\alpha}_t\,dt
+g^r_t\,dw^r_t
\end{equation}
with some $L_2$-valued processes $(f^{\alpha}_t)_{t\in [0,T]}$, 
$\alpha=0,1,2,...,d$, where 
$D_{\alpha}=\frac{\partial}{\partial x^{\alpha}}$ for $\alpha=1,2,...,d$, and 
$D_{\alpha}$ is the identity operator for $\alpha=0$. 
It was first proved in \cite{P1975} that if \eqref{eq0} holds 
and $u$, $f$ and $g$ satisfy appropriate measurability 
and integrability conditions then $u$ 
has a continuous $H$-valued modification, denoted also 
by $u$, such that  $|u_t|^2_H$ has the 
stochastic differential
\begin{equation}                                                                             \label{formula1}
d|u_t|^2_H=\big(2<u_t,v^{\ast}_t>+\|g_t\|^2_H\big)\,dt
+2(u_t,g^r_t)\,dw^r_t,  
\end{equation}
where $\|h\|^{2}_{H}=\sum_r|h^r|^2_H$, and $(\,,\,)$ and $<\,,\,>$ denote  
the inner product in $H$ and the duality product of $V$ and 
$V^{\ast}$, respectively.  
The proof of this result in \cite{P1975} was combined with 
the theory of SPDEs developed 
there. A direct proof was first given in \cite{KR1981}, see also 
\cite{PR} and \cite{Ro}, and a very nice shorter proof is 
presented in \cite{K2013} when $V$ is a Hilbert space. 
 To study SPDEs driven by 
(possibly discontinuous) semimartingales,  
processes $u$ satisfying \eqref{eq0} with $dA_t$ and $dM_t$ in place 
of $dt$ and $dw_t$ were considered, and a theorem on It\^o's formula 
was proved in \cite{GK1982},  
when $A=(A_t)_{t\in[0,T]}$ 
and $M=(M^1_t,M_t^2,...)_{t\in[0,T]}$ are (possibly discontinuous) increasing 
processes and martingales, respectively.  In this situation 
a further generalisation was given in \cite{GS2017}. 
In the special case when $V=W^1_2$, $H=L_2$ and equation \eqref{eq01} holds, It\^o's 
formula \eqref{formula1} has the form 
\begin{equation}
d|u_t|^2_{L_2}=\big(2(D^{\ast}_{\alpha}u_t, f^{\alpha}_t)
+\|g_t\|^2_{L_2}\big)\,dt+2(u_t,g^r_t)\,dw^r_t,  
\end{equation}
where $D^{\ast}_{\alpha}=-D_{\alpha}$ for $\alpha=1,2,...,d$ and $D_{\alpha}^{\ast}$ 
is the identity operator for $\alpha=0$. This formula is an 
important tool in the proof of existence and uniqueness of 
solutions in $W^m_2$ Sobolev spaces for SPDEs driven by Wiener processes.  
To have the corresponding tool for solvability in $W^m_p$ spaces,  
for $p\geq2$ 
a theorem on It\^o formula for $|u_{t}|^p_{L_p}$ is proved in \cite{K2010} when 
$(u_t)_{t\in[0,T]}$ is a $W^1_p$-valued process and  
$f^{\alpha}=(f^{\alpha}_t)_{t\in[0,T]}$ 
and $(g_t^r)_{t\in[0,T]}$ in equation \eqref{eq01} are $L_p$-valued processes.  
Our aim is to present an It\^o formula for $d|u_t|^p_{L_p}$ 
when instead of \eqref{eq01} we have 
\begin{equation}                                         \label{eq02}
du_t=D_{\alpha} f^{\alpha}_t\,dt
+g^r_t\,dw^r_t+\int_{Z}h_t(z)\,d\tilde\pi_t(dz), 
\end{equation}
where $(\tilde \pi_t(dz))_{t\in[0,T]}$ is a 
Poisson martingale measure with structure measure 
$\mu(dz)$ on a measurable space $(Z,\cZ)$, and $h=(h_t)_{t\in[0,T]}$ 
is a process with 
values in 
$$
L_p(\bR^d,L_2(Z,\mu))\cap L_p(\bR^d,L_p(Z,\mu)).  
$$
Our motivation is to present 
an It\^o formula to study solvability in $L_p$-spaces of SPDEs 
driven by Wiener processes and Poisson martingale measures. 
Our main  theorems on It\^o's formula, Theorem \ref{theorem1} 
and Theorem \ref{theorem2} 
generalise Lemma 5.1 and Theorem 2.1, respectively, from \cite{K2010}. 
We use them to prove an existence and uniqueness theorem 
for a class of stochastic integro-differential equations 
in \cite{GW2019}.   In \cite{GW2019} we need an It\^o's formula 
for $d|\<u_t\>|^p_{L_p}$, where $\<u_t\>=(\sum_{i=1}^M|u^i_t|^2)^{1/2}$ and $(u^i_t)_{t\in[0,T]}$ 
is a $W^1_p$-valued process having a stochastic differential of the type 
\eqref{eq02} for each $i=1,2,...,M$.  Therefore in Theorem \ref{theorem1} of the present paper 
we consider a system of stochastic differentials  instead of a single one.  

There are well-known theorems in the literature on It\^o's 
formula for semimartingales with values 
in separable Banach spaces, see for example \cite{BNV}, \cite{R2006} and \cite{VV}. 
In some aspects these results are more general than our main theorems, but do not cover them. 
In \cite{BNV} and \cite{VV}  
only continuous semimartingales are considered and their differential 
does not contain $D_if^i\,dt$ 
terms. The semimartingales $(u_t)_{t\in[0,T]}$ in \cite{R2006} 
contains stochastic integrals  
with respect to Poisson random measures and random 
martingale measures, but the theorems on It\^o's formula in this paper cannot be applied 
to $|u_t|_{L_p}^p$.

The structure of the paper is the following. In the next section we formulate 
the main results, Theorems \ref{theorem1} and \ref{theorem2}.  
In Section \ref{section preliminaries} we 
present a suitable It\^o's formula, Theorem \ref{theorem0} 
for the $p$-th power of the norm of 
an $\bR^M$-valued semimartingale, together with a stochastic Fubini theorem  
and a very simple tool, Lemma \ref{lemma tool}, 
which allow us to prove our main results in Section \ref{section proof}, 
by adapting the ideas and methods 
of \cite{K2010}. 

In conclusion we present some notions and notations. 
All random elements are given on a fixed complete probability space 
$(\Omega,\cF,P)$ equipped with a right-continuous filtration $(\cF_t)_{t\geq0}$ 
such that $\cF_0$ contains all $P$-zero sets of $\cF$. The $\sigma$-algebra  
of the predictable subsets of $\Omega\times[0,\infty)$ is denoted by $\cP$. 
We are given a sequence 
$w=(w^1_t,w^2_t,...)_{t\geq0}$ of $\cF_t$-adapted independent Wiener processes 
$w^r=(w^r_t)_{t\geq0}$,  such that $w_t-w_s$ is independent of $\cF_s$ for any $0\leq s\leq t$. 
We are given also a Poisson random measure $\pi(dz,dt)$ on $[0,\infty)\times Z$, 
with intensity measure $\mu(dz)dt$, where $\mu$ is a $\sigma$-finite measure 
on a measurable space $(Z,\cZ)$ with a countably generated  $\sigma$-algebra 
$\cZ$. We assume that the process $\pi_t(\Gamma):=\pi((0,t]\times \Gamma)$,  
$t\geq0$, is $\cF_t$-adapted and $\pi_t(\Gamma)-\pi_s(\Gamma)$ is independent 
of $\cF_s$ for any $0\leq s\leq t$  and $\Gamma\in\cZ$ such that $\mu(\Gamma)<\infty$. 
We use the notation $\tilde\pi(dz,dt)=\pi(dz,dt)-\mu(dz)dt$ for the {\it compensated 
Poisson random measure}, and set 
$\tilde\pi_t(\Gamma)=\tilde\pi(\Gamma,(0,t])=\pi_t(\Gamma)-t\mu(\Gamma)$ for $t\geq0$ 
and $\Gamma\in \cZ$ 
such that $\mu(\Gamma)<\infty$. For basic results concerning stochastic integrals 
with respect $\pi$ and $\tilde \pi$ we refer to \cite{A2009} and \cite{IW2011}. 
The Borel $\sigma$-algebra of a topological space $V$ is denoted by $\cB(V)$. 

The space of smooth functions $\varphi=\varphi(x)$ with compact supports on 
the $d$-dimensional Euclidean space $\bR^d$ is denoted by 
$C_0^{\infty}$. For $p, q\geq 1$ we denote by $L_p=L_p(\bR^d,\bR^M)$  
and $\cL_{q}=\cL_{q}(Z, \bR^M)$ 
the Banach spaces of $\bR^M$-valued Borel-measurable functions of $f=(f^i(x))_{i=1}^M$ 
and $\cZ$-measurable functions $h=(h^{i}(z))_{i=1}^M$ of $x\in\bR^d$ and $z\in Z$, respectively 
such that 
$$
|f|_{L_p}^p=\int_{\bR^d}|f(x)|^p\,dx
\quad
\text{and}
\quad 
|h|^{q}_{\cL_{q}}=\int_{\bR^d}|h(z)|^{q}\,\mu(dz)<\infty, 
$$
where $|v|$ means the Euclidean norm for vectors $v$ from Euclidean spaces. 
The notation 
$\cL_{p,q}$ means the space $\cL_{p}\cap\cL_{q}$ with the norm 
$$
|v|_{\cL_{p,q}}=\max(|v|_{\cL_{p}},|v|_{\cL_{q}}) \quad\text{for $v\in \cL_{p}\cap\cL_{q}$}.
$$
As usual $W^1_p$ denotes the space of functions $u\in L_p$ such that 
$D_iu\in L_p$ for every $i=1,2,...,d$, where $D_iv$ means the generalised derivative 
of $v$ in $x^i$ for locally integrable functions $v$ on $\bR^d$. The norm of $u\in W^1_p$ 
is defined by 
$$
|u|_{W^1_p}=|u|_{L_p}+\sum_{i=1}^d|D_iu|_{L_p}. 
$$
The space of sequences
$\nu=(\nu^{1},\nu^{2},...)$
of vectors $\nu^{k}\in\bR^{M}$ with finite norm
$$
|\nu|_{\ell_{2} }=\big(
\sum_{k=1}^{\infty}|\nu^k|^{2}\big)^{1/2}
$$
is denoted by $\ell_2=\ell_2(\bR^M)$, and by $l_2$ when $M=1$. 
We use the notation $L_p=L_p(\ell_2)$ for $L_p(\bR^d,\ell_2)$, 
the space of Borel-measurable 
functions $g=(g^{ir})$ on $\bR^d$ with values in $\ell_2$ such that 
$$
|g|_{L_p}^p=\int_{\bR^d}|g(x)|_{\ell_2}^p\,dx<\infty.  
$$
We denote by $L_{p}=L_p(\cL_{p,q})$ and $L_p=L_p(\cL_q)$ 
the Banach spaces of Borel-measurable functions 
$h=(h^i(x,z))$ and $\tilde{h}=(\tilde{h}^i(x,z))$ of $x\in\bR^d$ 
with values in $\cL_{p,q}$ and $\cL_q$,  
respectively,  
such that 
$$
|h|^p_{L_{p}}=\int_{\bR^d}|h(x,\cdot)|^p_{\cL_{p,q}}\,dx<\infty
\quad\text{and}\quad  
|\tilde{h}|^p_{L_{p}}=\int_{\bR^d}|\tilde{h}(x,\cdot)|^p_{\cL_q}\,dx<\infty. 
$$
For fixed $T>0$, $p\geq2$ and for a separable real Banach 
space $V$ we denote by $\bL_p=\bL_p(V)$ 
the space of predictable $V$-valued functions $f=(f_t)$ of 
$(\omega,t)\in\Omega\times[0,T]$ such that 
$$
|f|_{\bL_p}^p=E\int_0^T|f_t|^p_{V}\,dt<\infty. 
$$
In the sequel $V$ will be $L_p(\bR^d,\bR^M)$, or $L_p(\bR^d,\ell_2)$, or 
$L_p(\bR^d,\cL_{p,2})$. When $V=L_p(\bR^d,\cL_{p,2})$ then for $\bL_p(V)$  
the notation $\bL_{p,2}$ is also used.  
Recall that the summation convention with respect 
to integer valued indices is used throughout 
the paper. 
\mysection{Formulation of the results}                         \label{section formulation}

\begin{assumption}                                                          \label{assumption1}
Let $u=(u^{i}_{t})_{t\in0,T}$ be a progressively measurable $L_p$-valued 
process such that there exist $f=(f^{i}_t(x))\in \bL_p$, 
$g=(g^{ir}_{t}(x))\in\bL_p$, $h=(h^i_t(x,z))\in\bL_{p,2}$, and    
an $L_p$-valued 
$\mathcal{F}_0$-measurable random variable $\psi=(\psi^i(x))_{i=1}^M$, 
such that 
for every $\varphi\in C_0^{\infty}$
\begin{equation}                                                                \label{eq1}
(u^i_t,\varphi)
=(\psi,\varphi)
+\int_0^t(f^i_s,\varphi)\,ds
+\int_0^t(g_s^{ir},\varphi)\,dw_s^r
+\int_0^t\int_Z(h^i_s(z),\varphi)\,\tilde{\pi}(dz,ds)
\end{equation}
for $P\otimes dt$-almost every $(\omega,t)\in\Omega\times[0,T]$ 
for $i=1,2,...,M$.
\end{assumption}
In equation \eqref{eq1}, and later on, we use the notation $(v,\phi)$ 
for the Lebesgue integral over $\bR^d$ of the product $v\phi$ 
for functions  $v$ and $\phi$ on $\bR^d$ when their product 
and its integral are well-defined. 

\begin{theorem}                                                          \label{theorem1}
Let Assumption \ref{assumption1} hold with $p\geq2$. Then there is 
an $L_p$-valued adapted cadlag process
 $\bar u=(\bar u^i_t)_{t\in[0,T]}$ 
such that equation \eqref{eq1}, with $\bar u$ in place of $u$, holds 
for each $\varphi\in C_0^{\infty}$  almost surely for all $t\in[0,T]$. 
Moreover,  $u=\bar u$ for $P\otimes dt$-almost every 
$(\omega,t)\in\Omega\times[0,T]$, and almost surely 
\begin{align}
|\bar u_t|^p_{L_p}
&= |\psi|_{L_p}^p
+p\int_0^t\int_{\mathbb{R}^d}|\bar u_s|^{p-2}\bar u^i_sg^{ir}_s\,dx\, dw^r_s                    \nonumber\\
& 
+\tfrac{p}{2}\int_0^t\int_{\mathbb{R}^d}\big( 2|\bar u_s|^{p-2}\bar u^i_sf^i_s
+(p-2)|\bar u_s|^{p-4}|\bar u^i_sg^{i\cdot}_s|_{l_2}^2
+|\bar u_s|^{p-2}|g_s|_{\ell_2}^2\big)\,dx\,ds                                                                \nonumber\\
& 
+p\int_0^t\int_Z\int_{\mathbb{R}^d}|\bar u_{s-}|^{p-2}\bar u^i_{s-}h^i_s
\,dx\,\tilde{\pi}(dz,ds)                                                                                              \nonumber\\
&
+\int_0^t\int_Z\int_{\mathbb{R}^d}
(|\bar u_{s-}+h_s|^p-|\bar u_{s-}|^p-p|\bar u_{s-}|^{p-2}\bar u^i_{s-}h^i_s)
\,dx\,\pi(dz,ds)                                                                                                         \label{Ito1}
\end{align}
for all $t\in[0,T]$, where   $\bar u_{s-}$ means the left-hand limit 
in $L_p$ at $s$ of $\bar u$. 
\end{theorem}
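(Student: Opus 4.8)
The plan is to follow the mollification strategy of \cite{K2010}, reducing the infinite-dimensional formula \eqref{Ito1} to the finite-dimensional It\^o formula of Theorem \ref{theorem0} applied pointwise in $x$, and then integrating in $x$ by the stochastic Fubini theorem. First I would fix a nonnegative $\eta\in C_0^{\infty}$ with $\int_{\bR^d}\eta\,dx=1$, put $\eta_{\varepsilon}(x)=\varepsilon^{-d}\eta(x/\varepsilon)$, and form the spatial mollifications $u^{(\varepsilon)}_t(x)=(u_t,\eta_{\varepsilon}(x-\cdot))$, and likewise $\psi^{(\varepsilon)},f^{(\varepsilon)},g^{(\varepsilon)},h^{(\varepsilon)}$. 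Taking $\eta_{\varepsilon}(x-\cdot)$ as the test function in \eqref{eq1}, for each fixed $x$ the process $u^{(\varepsilon)}_t(x)$ is an $\bR^M$-valued semimartingale with initial value $\psi^{(\varepsilon)}(x)$ and differential $du^{(\varepsilon),i}_t(x)=f^{(\varepsilon),i}_t(x)\,dt+g^{(\varepsilon),ir}_t(x)\,dw^r_t+\int_Z h^{(\varepsilon),i}_t(x,z)\,\tilde\pi(dz,dt)$; since the right-hand side is cadlag in $t$, the identity holds for all $t$ simultaneously, and joint measurability in $(\omega,t,x)$ is routine from progressive measurability.

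Applying Theorem \ref{theorem0} to the function $y\mapsto|y|^p$ on $\bR^M$, whose first two derivatives are $p|y|^{p-2}y^i$ and $p|y|^{p-2}\delta_{ij}+p(p-2)|y|^{p-4}y^iy^j$, gives a pointwise-in-$x$ identity in which the second-order term splits exactly into the three summands $2|u^{(\varepsilon)}_s|^{p-2}u^{(\varepsilon),i}_sf^{(\varepsilon),i}_s$, $(p-2)|u^{(\varepsilon)}_s|^{p-4}|u^{(\varepsilon),i}_sg^{(\varepsilon),i\cdot}_s|_{l_2}^2$ and $|u^{(\varepsilon)}_s|^{p-2}|g^{(\varepsilon)}_s|_{\ell_2}^2$ of \eqref{Ito1}. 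I would then integrate this identity over $\bR^d$ and use the stochastic Fubini theorem of Section \ref{section preliminaries} to interchange $\int_{\bR^d}dx$ with the $dw^r$ and $\tilde\pi(dz,ds)$ integrals; the hypotheses $f,g\in\bL_p$, $h\in\bL_{p,2}$ together with the smoothing supply the integrability needed to apply it. At this stage \eqref{Ito1} holds for every $\varepsilon>0$ with all quantities replaced by their mollifications.

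The remaining task is the passage $\varepsilon\to0$. The data converge in the natural norms, namely $\psi^{(\varepsilon)}\to\psi$ in $L_p$, $f^{(\varepsilon)}\to f$ and $g^{(\varepsilon)}\to g$ in $\bL_p$, and $h^{(\varepsilon)}\to h$ in $\bL_{p,2}$, by standard properties of mollifiers. To construct the cadlag modification $\bar u$ I would apply the formula just obtained to the difference $u^{(\varepsilon)}-u^{(\varepsilon')}$ and combine it with the Burkholder--Davis--Gundy inequality to show that $(u^{(\varepsilon)})_{\varepsilon>0}$ is Cauchy in the space of adapted cadlag $L_p$-valued processes under the norm $\big(E\sup_{t\le T}|\,\cdot_t|_{L_p}^p\big)^{1/p}$. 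Its limit is the desired $\bar u$, which is cadlag (continuous apart from the jumps coming from $\tilde\pi$) and, since $u^{(\varepsilon)}_t\to u_t$ in $L_p$ for $P\otimes dt$-a.e.\ $(\omega,t)$, agrees with $u$ for $P\otimes dt$-a.e.\ $(\omega,t)$.

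I expect the main obstacle to be controlling the nonlinear integrands so that each term of \eqref{Ito1} converges, most delicately the compensator term $\int_0^t\int_Z\int_{\bR^d}\big(|\bar u_{s-}+h_s|^p-|\bar u_{s-}|^p-p|\bar u_{s-}|^{p-2}\bar u^i_{s-}h^i_s\big)\,dx\,\pi(dz,ds)$. Here the two-sided integrability $h\in\bL_{p,2}$ is essential: by the elementary estimate recorded in Lemma \ref{lemma tool}, the Taylor remainder is nonnegative and bounded above by $C\big(|\bar u_{s-}|^{p-2}|h_s|^2+|h_s|^p\big)$, so after integration in $z$ against $\mu$ it is dominated by $|\bar u_{s-}|^{p-2}|h_s|_{\cL_2}^2+|h_s|_{\cL_p}^p$, which is controlled in $x$ and in $(\omega,s)$ precisely because $h\in\bL_{p,2}$. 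The same lemma, with the convergence of the mollified data and dominated convergence, lets me pass to the limit in the remaining nonlinear terms $|\bar u_s|^{p-2}\bar u^i_sg^{ir}_s$, $|\bar u_s|^{p-2}\bar u^i_sf^i_s$, $|\bar u_s|^{p-4}|\bar u^i_sg^{i\cdot}_s|_{l_2}^2$ and $|\bar u_s|^{p-2}|g_s|_{\ell_2}^2$, as well as in the jump martingale term, thereby yielding \eqref{Ito1} for $\bar u$ and completing the proof.
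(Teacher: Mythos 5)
Your overall strategy---mollify in $x$, apply the finite-dimensional It\^o formula of Theorem \ref{theorem0} to $|u^{(\varepsilon)}_t(x)|^p$ for each fixed $x$, integrate over $\bR^d$ via the stochastic Fubini theorems, and pass to the limit $\varepsilon\to0$ by dominated convergence---is exactly the paper's. The one genuine divergence is how $\bar u$ is produced. The paper constructs it \emph{before} any mollification: Lemmas \ref{lemma f}, \ref{lemma g} and \ref{lemma h} provide versions $m,a,b\in\cU_p$ of the three integral processes that are cadlag in $t$ for each $(\omega,x)$, jointly measurable, and satisfy $E\int_{\bR^d}\sup_{t\le T}|\cdot|^p\,dx<\infty$, and then $\bar u:=\psi+a+b+m$. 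You instead propose to obtain $\bar u$ a posteriori, as the limit of the family $(u^{(\varepsilon)})$ shown to be Cauchy in $E\sup_{t\le T}|\cdot|^p_{L_p}$ by applying the $\varepsilon$-level formula to differences $u^{(\varepsilon)}-u^{(\varepsilon')}$ together with the Burkholder--Davis--Gundy inequality. That Cauchy argument is precisely what the paper is forced to use in the proof of Theorem \ref{theorem2} (because of the $D_if^i$ term), and it can be made to work here as well.

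This reordering does, however, leave a gap you need to close. The verification of the integrability hypotheses of the stochastic Fubini theorems (the quantities $A_1(x),\dots,C$ in the paper's proof) rests on the bound $\int_{\bR^d}\sup_{t\le T}|u^{(\varepsilon)}_t(x)|^p\,dx<\infty$ a.s., with the supremum \emph{inside} the $dx$-integral; the paper obtains it from $\sup_t|u^{(\varepsilon)}_t(x)|^p\le(\sup_t|u_t|^p)^{(\varepsilon)}(x)$ together with estimate \eqref{u sup estimate}, i.e.\ precisely from the $\cU_p$-construction you skip. In your scheme this must be supplied separately (it can be, e.g.\ by the pointwise-in-$x$ BDG inequality for the $\bR^M$-valued semimartingale $u^{(\varepsilon)}_\cdot(x)$ followed by integration in $x$ and Lemma \ref{lemma epsilon}); likewise, the ``routine'' joint measurability of a version of $u^{(\varepsilon)}_t(x)$ that is cadlag in $t$ for \emph{every} $x$ is exactly the non-trivial ``regular version'' content of Theorems \ref{theorem Fubini1} and \ref{theorem Fubini2} and of Lemma \ref{lemma h}, not a consequence of progressive measurability alone. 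Finally, a misattribution: the bound $|J^a|v|^p|\le N(|v|^{p-2}|a|^2+|a|^p)$ on the Taylor remainder is \eqref{Jestimate}, obtained from Taylor's formula; Lemma \ref{lemma tool} is the unrelated device for extracting a dominating function from a norm-convergent sequence in a Banach function space, which is what the paper actually uses (in combination with that bound) to run the dominated-convergence arguments in the limit $\varepsilon\to0$.
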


Notice that for $M=1$ equation \eqref{Ito1} has the simpler form 
$$ 
|\bar u_t|^p_{L_p}= 
 |\psi|_{L_p}^p+p\int_0^t\int_{\mathbb{R}^d}|\bar u_s|^{p-2}\
 \bar u_sg_s^r\,dx\,dw^r_s
 $$
 $$
+\tfrac{p}{2}\int_0^t\int_{\mathbb{R}^d}\big( 2|\bar u_s|^{p-2}\bar u_sf_s
+(p-1)|\bar u_s|^{p-2}|g_s|^2_{l_2}   \big)\,dx\,ds
$$
$$
+p\int_0^t\int_Z\int_{\mathbb{R}^d}
|\bar u_{s-}|^{p-2}\bar u_{s-}h_s\,dx\,\tilde{\pi}(dz,ds)
$$
\begin{equation}                                                               \label{Ito1simple}
+\int_0^t\int_Z\int_{\mathbb{R}^d}\big( | \bar u_{s-}
+h_s|^p-| \bar u_{s-}|^p-p| \bar u_{s-}|^{p-2}\bar u_{s-}h_s  \big)\,dx\,\pi(dz,ds).    
\end{equation}

\medskip

To formulate our second main theorem we take $M=1$ and make 
the following assumption.
\begin{assumption}                                                                            \label{assumption2}
Let $u=(u_{t})_{t\in0,T}$ be a progressively measurable 
$W^1_p$-valued process such that the following conditions hold:
\newline
(i)$$
E\int_0^T|u_t|^p_{W^1_p}\,dt<\infty\,; 
$$
(ii) there exist $f^{\alpha}=(f^{\alpha}_t(x))\in \bL_p$ 
for $\alpha\in\{0,1,...,d\}$, 
$g=(g^{r}_{t}(x))\in\bL_p$, $h=(h_t(x,z))\in\bL_{p,2}$, and    
an $L_p$-valued 
$\mathcal{F}_0$-measurable random variable $\psi=(\psi(x))$, 
such that 
for every  $\varphi\in C_0^{\infty}$ we have 
\begin{equation}                                                                                                \label{eq2}
(u_t,\varphi)=(\psi,\varphi)+\int_0^t(f_s^\alpha,D^*_\alpha\varphi)\,ds
+\int_0^t(g_s^r,\varphi)\,dw_s^r+\int_0^t\int_Z(h_s(z),\varphi)\,\tilde{\pi}(dz,ds)
\end{equation}
for $P\otimes dt$-almost every $(\omega,t)\in\Omega\times[0,T]$, where 
$D_{\alpha}^{\ast}=-D_{\alpha}$ for $\alpha=1,2,..,d$,  
and $D^{\ast}_{\alpha}$ is the identity operator for $\alpha=0$. 
\end{assumption}

\begin{theorem}                                                        \label{theorem2}
Let Assumption \ref{assumption2} hold with $p\geq2$. Then there is 
an $L_p$-valued adapted cadlag process
 $\bar u=(\bar u_t)_{t\in[0,T]}$ 
such that for each $\varphi\in C_0^{\infty}$ equation \eqref{eq2} holds 
with $\bar u$ in place of $u$ almost surely for all $t\in[0,T]$. 
Moreover,  $u=\bar u$ for $P\otimes dt$-almost every 
$(\omega,t)\in\Omega\times[0,T]$, and 
almost surely 
$$ 
|\bar u_t|^p_{L_p}= 
 |\psi|_{L_p}^p+p\int_0^t\int_{\mathbb{R}^d}|u_s|^{p-2}u_sg_s^r\,dx\,dw^r_s
 $$
 $$
+\tfrac{p}{2}\int_0^t\int_{\mathbb{R}^d}
\big(2|u_s|^{p-2}u_sf^0_s-2(p-1)|u_s|^{p-2}f^i_sD_iu_s
+(p-1)|u_s|^{p-2}|g_s|^2_{l_2}   \big)\,dx\,ds
$$
$$
+\int_0^t\int_Z\int_{\mathbb{R}^d}
p|\bar u_{s-}|^{p-2}\bar u_{s-}h_s\,dx\,\tilde{\pi}(dz,ds)
$$
\begin{equation}                                                                          \label{Ito2}
+\int_0^t\int_Z\int_{\mathbb{R}^d}\big( | \bar u_{s-}
+h_s|^p-| \bar u_{s-}|^p-p| \bar u_{s-}|^{p-2}\bar u_{s-}h_s  \big)\,dx\,\pi(dz,ds)   
\end{equation}
for all $t\in[0,T]$, where $\bar u_{s-}$ denotes the 
left-hand limit in $L_p(\bR^d)$ of $\bar u$ at $s\in(0,T]$. 
Furthermore, there is a constant $N=N(d,p)$ such that 
$$     
E\sup_{t\leq T}|\bar u_t|^p_{L_p} 
\leq 2 E|\psi|_{L_p}^p+NT^{p-1}E\int_0^T|f^0_t|^p_{L_p}\,dt+
NE\int_0^T|h_t|^p_{L_p(\cL_p)}\,dt
$$
\begin{equation}
+ NT^{(p-2)/2}E\int_0^T|g|^p_{L_p}+\sum_{i=1}^d|f^i_t|^p_{L_p}
+|Du_t|^p_{L_p}\,dt
+NT^{(p-2)/2}E\int_0^T|h_t|^p_{L_p(\cL_2)}\,dt.           \label{Ito Lp estimate}
\end{equation}
\end{theorem}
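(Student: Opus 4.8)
The plan is to deduce Theorem \ref{theorem2} from Theorem \ref{theorem1} (in the scalar case $M=1$) by a mollification and integration-by-parts argument, in the spirit of the passage from Lemma 5.1 to Theorem 2.1 in \cite{K2010}. Fix a nonnegative $\zeta\in C_0^{\infty}$ with $\int\zeta\,dx=1$, set $\zeta_{\epsilon}(x)=\epsilon^{-d}\zeta(x/\epsilon)$, and write $v^{(\epsilon)}=v\ast\zeta_{\epsilon}$ for mollification in the $x$-variable. Taking in \eqref{eq2} the test function $\varphi(\cdot)=\zeta_{\epsilon}(x-\cdot)$ for each fixed $x$, and using the stochastic Fubini theorem to justify interchanging $\int_{\bR^d}\cdot\,dx$ with the stochastic integrals, the smoothed process $u^{(\epsilon)}$ satisfies, for $P\otimes dt$-almost every $(\omega,t)$,
$$
u^{(\epsilon)}_t=\psi^{(\epsilon)}+\int_0^t \tilde f^{(\epsilon)}_s\,ds+\int_0^t (g^r_s)^{(\epsilon)}\,dw^r_s+\int_0^t\int_Z (h_s(z))^{(\epsilon)}\,\tilde\pi(dz,ds),
$$
where $\tilde f^{(\epsilon)}_s=(f^0_s)^{(\epsilon)}+\sum_{\alpha=1}^{d}D_{\alpha}(f^{\alpha}_s)^{(\epsilon)}$. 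The crucial point is that, since the kernel is smooth, $D_{\alpha}(f^{\alpha}_s)^{(\epsilon)}=f^{\alpha}_s\ast D_{\alpha}\zeta_{\epsilon}$ lies in $L_p$, so $\tilde f^{(\epsilon)}\in\bL_p$ and Assumption \ref{assumption1} holds for $u^{(\epsilon)}$ with $M=1$. Theorem \ref{theorem1} then furnishes an $L_p$-valued cadlag modification $\bar u^{(\epsilon)}$ obeying the scalar It\^o formula \eqref{Ito1simple}, with $\tilde f^{(\epsilon)}$, $(g^r)^{(\epsilon)}$, $(h)^{(\epsilon)}$, $\psi^{(\epsilon)}$ in place of $f$, $g^r$, $h$, $\psi$.

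Next I would recast the drift of \eqref{Ito1simple} in the form required by \eqref{Ito2}. In $\int_{\bR^d}|\bar u^{(\epsilon)}_s|^{p-2}\bar u^{(\epsilon)}_s\,\tilde f^{(\epsilon)}_s\,dx$ the summand containing $D_{\alpha}(f^{\alpha}_s)^{(\epsilon)}$ for $\alpha\ge1$ is integrated by parts in $x$; this is legitimate because $u^{(\epsilon)}_s$ is smooth in $x$, and using $D_{\alpha}(|v|^{p-2}v)=(p-1)|v|^{p-2}D_{\alpha}v$ for scalar $v$ the summand equals $-(p-1)\int_{\bR^d}|\bar u^{(\epsilon)}_s|^{p-2}(f^{\alpha}_s)^{(\epsilon)}D_{\alpha}\bar u^{(\epsilon)}_s\,dx$. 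Substituting, the It\^o formula for $\bar u^{(\epsilon)}$ takes exactly the shape of \eqref{Ito2}, with every coefficient replaced by its $\epsilon$-mollification and $D_iu$ replaced by $D_i\bar u^{(\epsilon)}=(D_iu)^{(\epsilon)}$.

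The third step is to establish the a priori bound \eqref{Ito Lp estimate} for $\bar u^{(\epsilon)}$, \emph{uniformly in $\epsilon$}. Taking expectations in the mollified \eqref{Ito2}, the Wiener and compensated-Poisson integrals vanish in mean, and $E\sup_{t\le T}|\bar u^{(\epsilon)}_t|^p_{L_p}$ is controlled by the Burkholder--Davis--Gundy inequality applied to the martingale parts, together with Young's inequality to absorb the resulting supremum factors, and H\"older's inequality in $t$ to produce the powers $T^{p-1}$ and $T^{(p-2)/2}$. The genuinely delicate contribution is the nonlinear jump term $\int_0^t\!\int_Z\!\int_{\bR^d}(|\bar u^{(\epsilon)}_{s-}+(h_s)^{(\epsilon)}|^p-|\bar u^{(\epsilon)}_{s-}|^p-p|\bar u^{(\epsilon)}_{s-}|^{p-2}\bar u^{(\epsilon)}_{s-}(h_s)^{(\epsilon)})\,dx\,\pi(dz,ds)$; after compensation its mean equals the $\mu(dz)\,ds$-integral, whose integrand I would bound pointwise by Lemma \ref{lemma tool}, controlling $|a+b|^p-|a|^p-p|a|^{p-2}ab$ in absolute value by a constant multiple of $|a|^{p-2}|b|^2+|b|^p$. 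Integrating over $\bR^d$ and $Z$ and invoking $h\in\bL_{p,2}$ (so $h$ lies in both $\cL_p$ and $\cL_2$) produces precisely the norms $|h|^p_{L_p(\cL_p)}$ and $|h|^p_{L_p(\cL_2)}$ in \eqref{Ito Lp estimate}, while the contraction $|v^{(\epsilon)}|_{L_p}\le|v|_{L_p}$ of the mollifier keeps all constants free of $\epsilon$.

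Finally I would pass to the limit $\epsilon\downarrow0$. Since $\psi^{(\epsilon)}\to\psi$ in $L_p$, $(f^{\alpha})^{(\epsilon)}\to f^{\alpha}$ and $(g^r)^{(\epsilon)}\to g^r$ in $\bL_p$, $(h)^{(\epsilon)}\to h$ in $\bL_{p,2}$, and $u^{(\epsilon)}\to u$ in $\bL_p(W^1_p)$, I would apply the uniform estimate of the previous step to the difference $\bar u^{(\epsilon)}-\bar u^{(\delta)}$, which solves an equation of the same mollified type with differenced coefficients, to conclude $E\sup_{t\le T}|\bar u^{(\epsilon)}_t-\bar u^{(\delta)}_t|^p_{L_p}\to0$ as $\epsilon,\delta\downarrow0$. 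Hence $(\bar u^{(\epsilon)})$ is Cauchy in the complete space of adapted $L_p$-valued cadlag processes under $(E\sup_{t\le T}|\cdot|^p_{L_p})^{1/p}$, and its limit $\bar u$ is the required modification; since $u^{(\epsilon)}=\bar u^{(\epsilon)}$ for $P\otimes dt$-a.e.\ $(\omega,t)$, passing to the limit gives $u=\bar u$ a.e.\ and \eqref{eq2} for $\bar u$, while \eqref{Ito Lp estimate} survives the limit. Passing to the limit in each term of the mollified \eqref{Ito2} — the martingale terms again through Burkholder--Davis--Gundy, the nonlinear Poisson term through Lemma \ref{lemma tool} — yields \eqref{Ito2} for $\bar u$. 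The main obstacle throughout is this nonlinear Poisson integral: both its uniform estimation and the verification of its convergence rest essentially on the dual integrability $h\in\bL_{p,2}$ and on the elementary bound of Lemma \ref{lemma tool}, and it is here that the argument departs most from the continuous setting of \cite{K2010}.
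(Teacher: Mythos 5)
Your proposal follows essentially the same route as the paper's proof: mollify in $x$ so that $D_\alpha(f^\alpha)^{(\varepsilon)}\in\bL_p$ and Assumption \ref{assumption1} holds for $u^{(\varepsilon)}$, apply Theorem \ref{theorem1}, integrate by parts to convert $|u^{(\varepsilon)}|^{p-2}u^{(\varepsilon)}D_if^{i(\varepsilon)}$ into $-(p-1)|u^{(\varepsilon)}|^{p-2}f^{i(\varepsilon)}D_iu^{(\varepsilon)}$, derive the a priori bound uniformly in $\varepsilon$ via Davis/BDG, Minkowski, H\"older and Young together with the Taylor estimate \eqref{Jestimate}, conclude that $(\bar u^{\varepsilon})$ is Cauchy, and pass to the limit. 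The only slip is one of attribution: the pointwise bound $\bigl||a+b|^p-|a|^p-p|a|^{p-2}ab\bigr|\leq N(|a|^{p-2}|b|^2+|b|^p)$ is the Taylor inequality \eqref{Jestimate}, not Lemma \ref{lemma tool}, which is the domination lemma used (correctly, as you do later) in the limit passage.
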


\mysection{Preliminaries}                                                  \label{section preliminaries}

First we present an It\^o formula for an $\bR^M$-valued semimartigale 
$X=(X^1_t,...,X^M_t)_{t\in[0,T]}$ given by 
$$
X_t=X_0+\int_0^tf_s\,ds
+\int_0^tg_s^{r}\,dw_s^r
$$
\begin{equation}                                             \label{1.9.2}
+\int_0^t\int_{Z}\bar h_s(z)\,\pi(dz,ds)
+\int_0^t\int_Zh_s(z)\,\tilde{\pi}(dz,ds), \quad\text{for $t\in[0,T]$},  
\end{equation}
where $X_0$ is 
an $\bR^M$-valued $\mathcal{F}_0$-measurable random variable, 
$f=(f^i_t)_{t\in[0,T]}$ and $g=(g^{ir}_t)_{t\in[0,T]}$ are predictable 
processes with values in $\bR^M$ and $\ell_2=\ell_2(\bR^M)$, respectively, 
$\bar h=(\bar h_t^i(z))_{t\in[0,T]}$ and $h=(h_t^i(z))_{t\in[0,T]}$ are 
$\bR^M$-valued $\cP\otimes\cZ$-measurable functions  
on $\Omega\times[0,T]\times Z$ such that almost surely
\begin{equation}                                                             \label{integrands}
\bar h_t^i(z)h_t^j(z)=0 \quad\text{for $i,j=1,2,...,M$,  
for all $t\in[0,T]$ and $z\in Z$},   
\end{equation}
and 
\begin{equation}                                                             \label{integrals}
\int_0^T\int_Z|\bar h_s(z)|\,\pi(dz,dt)<\infty,\quad 
\int_0^T|f_t|+|g_t|^2_{\ell_2}+|h_t(\cdot)|^{2}_{\cL_2}\,dt<\infty. 
\end{equation}
\begin{theorem}                                                             \label{theorem0}
Let conditions \eqref{integrands} and \eqref{integrals} hold, 
and let $\phi$ from $C^2(\bR^M)$, the space of continuous 
real functions on $\bR^M$ whose derivatives up 
to second order are continuous functions on $\bR^M$. 
Then $\phi(X_t)$ is a semimartingale such that 
\begin{align}
\phi(X_t)
= &\phi(X_0)+ \int_0^tD_i\phi(X_s)g_s^{ir}\,dw_s^r                        
+\int_0^tD_i\phi(X_s)f^i_s+
\tfrac{1}{2}D_iD_j\phi(X_s)g_s^{ir}g_s^{jr}\,ds                                                                    \nonumber\\
& + \int_0^t\int_Z\phi(X_{s-}+\bar h_s(z))-\phi(X_{s-})\,\pi(dz,ds)
+\int_0^t\int_ZD_i\phi(X_{s-})h^i_s(z)\,\tilde\pi(dz,ds)                                             \nonumber\\
&+\int_0^t\int_Z
\phi(X_{s-}+h_s(z))-\phi(X_{s-})-D_i\phi(X_{s-})h^i_s(z)\,\pi(dz,ds)                \label{Ito01}
\end{align}
almost surely for all $t\in[0,T]$. 
\end{theorem}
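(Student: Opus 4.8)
The plan is to deduce Theorem \ref{theorem0} from the classical It\^o formula for $\bR^M$-valued semimartingales with jumps (see, e.g., \cite{IW2011} or \cite{A2009}) and then to rearrange the resulting terms by exploiting the disjointness condition \eqref{integrands}. First I would check that $X$ is a genuine cadlag semimartingale with the decomposition read off from \eqref{1.9.2}: conditions \eqref{integrals} guarantee that $\int_0^\cdot f_s\,ds$ has absolutely continuous (hence finite-variation) paths, that $\int_0^\cdot g_s^r\,dw_s^r$ is a continuous local martingale, that $\int_0^\cdot\int_Z \bar h_s(z)\,\pi(dz,ds)$ is a finite-variation pure-jump process (since $\int_0^T\int_Z|\bar h|\,\pi<\infty$ a.s.), and that $\int_0^\cdot\int_Z h_s(z)\,\tilde\pi(dz,ds)$ is a locally square-integrable martingale. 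Thus the continuous local-martingale part of $X^i$ is $\int_0^\cdot g_s^{ir}\,dw_s^r$, so its bracket is $\langle X^{i,c},X^{j,c}\rangle_t=\int_0^t g_s^{ir}g_s^{jr}\,ds$, while at a point $(s,z)$ of $\pi$ the jump is $\Delta X_s=\bar h_s(z)+h_s(z)$.

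Next I would write down the general It\^o formula,
\[
\phi(X_t)=\phi(X_0)+\int_0^t D_i\phi(X_{s-})\,dX_s^i
+\tfrac12\int_0^t D_iD_j\phi(X_{s-})\,d\langle X^{i,c},X^{j,c}\rangle_s
+\sum_{0<s\le t}\big(\phi(X_s)-\phi(X_{s-})-D_i\phi(X_{s-})\Delta X_s^i\big),
\]
and identify each piece. The first integral splits, by associativity of stochastic integration, into $\int_0^t D_i\phi(X_{s-})f_s^i\,ds+\int_0^t D_i\phi(X_{s-})g_s^{ir}\,dw_s^r+\int_0^t\int_Z D_i\phi(X_{s-})\bar h_s^i(z)\,\pi(dz,ds)+\int_0^t\int_Z D_i\phi(X_{s-})h_s^i(z)\,\tilde\pi(dz,ds)$; in the first two of these, $X_{s-}$ may be replaced by $X_s$ since the jump times form a Lebesgue-null subset of $[0,T]$. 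The crucial step is that, by \eqref{integrands}, at every jump $(s,z)$ at most one of $\bar h_s(z)$, $h_s(z)$ is nonzero, so the jump sum separates into a $\bar h$-part and an $h$-part; moreover each summand vanishes wherever the corresponding increment is zero, so both parts may be written as integrals over all of $Z$ against $\pi$. Combining the $\bar h$-part with $\int_0^t\int_Z D_i\phi(X_{s-})\bar h_s^i\,\pi$ recombines to $\int_0^t\int_Z(\phi(X_{s-}+\bar h_s)-\phi(X_{s-}))\,\pi(dz,ds)$, and the remaining terms reproduce the last two lines of \eqref{Ito01}.

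The main obstacle is that $\phi\in C^2(\bR^M)$ only, without bounded derivatives, so I must justify that each integral and the jump sum are well defined and that the formula holds up to time $T$ and not merely up to a localizing sequence. I would introduce the stopping times $\tau_n=\inf\{t\ge0:|X_t|\ge n\}\wedge T$ and apply the formula on $[0,\tau_n]$, on which the paths of $X$ stay in a compact set and hence $D_i\phi$, $D_iD_j\phi$ are bounded along $X_{s-}$. Boundedness of $D_i\phi$ together with the second bound in \eqref{integrals} shows that $D_i\phi(X_{s-})h_s^i$ is $\tilde\pi$-integrable, so the compensated integral is a well-defined local martingale; boundedness of $D_iD_j\phi$ together with a Taylor estimate bounds the $h$-jump correction by $\tfrac12\sup|D^2\phi|\int_0^t\int_Z|h|^2\,\pi$, which is a.s. finite because $\int_0^T\int_Z|h|^2\,\mu(dz)\,ds<\infty$ a.s. (a stopping-time comparison of $\pi$ with its compensator along the level sets of $\int_0^\cdot\int_Z|h|^2\,\mu(dz)\,ds$ gives $\int_0^T\int_Z|h|^2\,\pi<\infty$ a.s.). Letting $n\to\infty$ and using $\tau_n\uparrow T$ a.s. (as $X$ is cadlag, hence locally bounded) removes the localization and yields \eqref{Ito01} on all of $[0,T]$.
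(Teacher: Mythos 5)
Your proof is correct, but it takes a genuinely different route from the paper's --- in fact it is essentially the alternative argument the authors themselves sketch in the Remark following their proof (via Theorem VIII.27 of \cite{DM1982}). The paper starts from the It\^o formula of Theorem \ref{theorem standard} (the Ikeda--Watanabe form, valid only for bounded $h$), rewrites the compensated terms via the decomposition $I^a\phi=J^a\phi+D_i\phi\cdot a^i$ to reach \eqref{Ito01}, and then removes the boundedness assumption by truncating $h$ at level $n$, passing to a subsequence along which $\sup_{t\le T}|X^{(n_k)}_t-X_t|\to0$ a.s., and using dominated convergence term by term; the key a.s.\ bounds there are $\esssup|\bar h|<\infty$ and $\esssup|h|<\infty$ with respect to $\pi(dz,dt)$, the latter obtained by exactly the stopping-time comparison of $\int\int|h|^2\,\pi$ with its compensator that you also use. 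You instead invoke the general semimartingale It\^o formula (\cite{DM1982}, \cite{Pr2005}), identify the continuous bracket and the jumps of $X$, and use \eqref{integrands} to split the jump sum into the $\bar h$- and $h$-parts; the unboundedness of $D\phi$, $D^2\phi$ is handled by localizing at $\tau_n=\inf\{t:|X_t|\ge n\}$ rather than by truncating $h$. Your route is shorter and dispenses with the approximation $X^{(n)}\to X$ altogether, at the price of relying on the heavier general formula and on the (standard but not free) identification of $\Delta X_s$ with $\bar h_s(z)+h_s(z)$ at the atoms of $\pi$ and of the jumps of the $\tilde\pi$-integral with $h_s(z)$. Two small points you should tighten: first, the Taylor bound for $J^{h_s(z)}\phi(X_{s-})$ requires $|D^2\phi|$ to be controlled on the segment from $X_{s-}$ to $X_s=X_{s-}+\Delta X_s$, not merely ``along $X_{s-}$''; on $[0,\tau_n)$ both endpoints lie in the ball of radius $n$, so the bound $\tfrac12\sup_{|x|\le n}|D^2\phi(x)|\,|h_s(z)|^2$ is legitimate, but this needs to be said (and the single jump at $s=\tau_n$ treated separately). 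Second, the de-localization works because $R=\sup_{t\le T}|X_t|<\infty$ a.s.\ forces $\tau_n=T$ for every $n>R$, so the identity on $[0,\tau_n]$ is eventually the identity on all of $[0,T]$; mere convergence $\tau_n\uparrow T$ would not by itself recover the terminal time.
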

In this paper we need the following corollary of this theorem. 
\begin{corollary}                                                        \label{corollary Ito}
Let conditions \eqref{integrands} and \eqref{integrals} hold. Then for 
any $p\geq2$ the process $|X_t|^p$ is a semimartingale such that  
\begin{align}
|X_t|^p
= &|\psi|^p+ p\int_0^t|X_s|^{p-2}X^i_sg_s^{ir}\,dw_s^r                           \nonumber\\
& + \tfrac{p}{2}\int_0^t\left(2|X_s|^{p-2}X^i_sf^i_s+
(p-2)|X_s|^{p-4}|X^i_sg^{i\cdot}_s|_{l_2}^2
+|X_s|^{p-2}|g_s|_{l_2}^2\right)\,ds                                                                    \nonumber\\
& + p\int_0^t\int_Z|X_{s-}|^{p-2}X^i_{s-}h^i_s(z)\,\tilde\pi(dz,ds)
+\int_0^t\int_Z(|X_{s-}+\bar{h}_s|^p-|X_{s-}|^p)\,\pi(dz,ds)                \nonumber\\
&+\int_0^t\int_Z\left(
|X_{s-}+h_s|^p-|X_{s-}|^p-p|X_{s-}|^{p-2}X^i_{s-}h^i_s\right)\,\pi(dz,ds)                \label{Itop}
\end{align}
almost surely for all $t\in[0,T]$. 
\end{corollary}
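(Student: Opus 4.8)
The plan is to obtain \eqref{Itop} by applying Theorem \ref{theorem0} to the function $\phi(x)=|x|^p$ on $\bR^M$. The first task is to verify that $\phi\in C^2(\bR^M)$ for $p\geq2$ and to record its derivatives. Away from the origin a direct computation gives
$$
D_i\phi(x)=p|x|^{p-2}x^i,
\qquad
D_iD_j\phi(x)=p|x|^{p-2}\delta_{ij}+p(p-2)|x|^{p-4}x^ix^j.
$$
For $p=2$ these are globally smooth, while for $p>2$ one extends them to the origin by setting $D_i\phi(0)=0$ and $D_iD_j\phi(0)=0$.

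The step that requires care --- and the only genuine obstacle --- is the continuity of $D_iD_j\phi$ at $x=0$ when $2<p<4$, where the factor $|x|^{p-4}$ is singular. Writing $x^i=|x|\theta^i$ with $\theta=x/|x|$, the second term equals $p(p-2)|x|^{p-2}\theta^i\theta^j$, and since $|\theta^i\theta^j|\leq1$ while $|x|^{p-2}\to0$ as $x\to0$, both terms tend to $0=D_iD_j\phi(0)$; hence $\phi\in C^2(\bR^M)$ and Theorem \ref{theorem0} applies, so $|X_t|^p=\phi(X_t)$ is a semimartingale. For $p=2$ the coefficient $p(p-2)$ vanishes, so the Hessian reduces to $2\delta_{ij}$, and this same vanishing factor makes the term $(p-2)|X_s|^{p-4}|X_s^ig_s^{i\cdot}|_{l_2}^2$ in \eqref{Itop} well-defined (to be read as $0$) even on the set where $X_s=0$.

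With $\phi$ and its derivatives in hand, I would substitute them into \eqref{Ito01}. The Wiener integrand becomes $D_i\phi(X_s)g_s^{ir}=p|X_s|^{p-2}X_s^ig_s^{ir}$, and contracting the Hessian against $g_s^{ir}g_s^{jr}$ in the $ds$-integrand gives
$$
\tfrac12 D_iD_j\phi(X_s)g_s^{ir}g_s^{jr}
=\tfrac{p}{2}|X_s|^{p-2}|g_s|_{l_2}^2
+\tfrac{p(p-2)}{2}|X_s|^{p-4}|X_s^ig_s^{i\cdot}|_{l_2}^2,
$$
using $\delta_{ij}g_s^{ir}g_s^{jr}=|g_s|_{l_2}^2$ and $X_s^iX_s^jg_s^{ir}g_s^{jr}=|X_s^ig_s^{i\cdot}|_{l_2}^2$. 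Together with the first-order contribution $D_i\phi(X_s)f_s^i=p|X_s|^{p-2}X_s^if_s^i$, collecting terms with the common factor $\tfrac{p}{2}$ reproduces the $ds$-term of \eqref{Itop}. The three jump terms translate verbatim: $\phi(X_{s-}+\bar h_s)-\phi(X_{s-})=|X_{s-}+\bar h_s|^p-|X_{s-}|^p$ in the $\bar h$-driven $\pi$-integral, $D_i\phi(X_{s-})h_s^i=p|X_{s-}|^{p-2}X_{s-}^ih_s^i$ in the compensated integral, and the analogous substitution in the remainder term matches \eqref{Itop} line for line; finally the initial value $\phi(X_0)=|X_0|^p$ supplies the constant term (written $|\psi|^p$ in \eqref{Itop}), completing the identification.
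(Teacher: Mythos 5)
Your proposal is correct and follows exactly the paper's own route: apply Theorem \ref{theorem0} with $\phi(x)=|x|^p$, using $D_i|x|^p=p|x|^{p-2}x^i$ and $D_jD_i|x|^p=p(p-2)|x|^{p-4}x^ix^j+p|x|^{p-2}\delta_{ij}$ (with the convention $0/0:=0$). The only difference is that you spell out the $C^2$-verification at the origin and the contraction identities, which the paper leaves implicit.
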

\begin{proof}
Since the function $\phi(x)=|x|^p$ for $p\geq2$ belongs to $C^2(\bR^M)$  
with 
$$
D_i|x|^p=p|x|^{p-2}x^i, \quad D_jD_i|x|^p=p(p-2)|x|^{p-4}x^ix^j+p|x|^{p-2}\delta_{ij},  
$$
it is easy to see that Theorem \ref{theorem0} for $\phi(x)=|x|^p$ gives the corollary. 
Here and in the sequel $0/0:=0$. 
\end{proof}
We obtain Theorem \ref{theorem0} from 
the following well-known theorem on It\^o's formula. 

\begin{theorem}                                                                               \label{theorem standard}
Besides conditions \eqref{integrands} and 
\eqref{integrals} assume there is a constant $K$ such that $|h|\leq K$ for all 
$\omega\in\Omega$, $t\in[0,T]$ and $z\in Z$. Then for any 
$\phi\in C^2(\bR^M)$ the process $(\phi(X_t))_{t\in[0,T]}$ is a semimartingale 
such that 
$$
\phi(X_t)=\phi(X_0)
+\int_0^tf^i_sD_i\phi(X_s)+\tfrac{1}{2}g_s^{ir}g_s^{jr}D_{i}D_{j}\phi(X_s)\,ds
+\int_0^tg^{ir}_sD_i\phi(X_s)\,dw^r_s
$$
$$
+\int_0^t\int_Z\phi(X_{s-}+\bar h_s(z))-\phi(X_{s-})\,\pi(dz,ds)
+\int_0^t\int_Z\phi(X_{s-}+h_s(z))-\phi(X_{s-})\,\tilde \pi(dz,ds)
$$
\begin{equation}                                                                       \label{formula standard}
+\int_0^t\int_Z
\left(
\phi(X_s+h_s(z))-\phi(X_s)-h^i_s(z)D_i\phi(X_s)
\right)
\,\mu(dz)\,ds. 
\end{equation}
\end{theorem}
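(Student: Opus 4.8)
The plan is to obtain \eqref{formula standard} from the general It\^o formula for $\bR^M$-valued semimartingales with jumps, as found in \cite{IW2011} or \cite{A2009}, by first identifying the characteristics of $X$ and then rewriting the resulting jump terms into the claimed form. I would begin by checking that $X$ is indeed a semimartingale and decomposing it accordingly: the drift $\int_0^t f_s\,ds$ and the integral $\int_0^t\int_Z\bar h_s(z)\,\pi(dz,ds)$ are of finite variation (the latter by the first bound in \eqref{integrals}), $M^c_t=\int_0^t g^r_s\,dw^r_s$ is a continuous local martingale with $[X^{c,i},X^{c,j}]_t=\int_0^t g^{ir}_s g^{jr}_s\,ds$, and $\int_0^t\int_Z h_s(z)\,\tilde\pi(dz,ds)$ is a locally square-integrable martingale under $|h|\le K$ and \eqref{integrals}. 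The key structural observation is that the disjoint-support condition \eqref{integrands} forces, at every atom $(s,z)$ of $\pi$, at most one of $\bar h_s(z)$, $h_s(z)$ to be non-zero, so that the jump of $X$ at such a time is $\Delta X_s=\bar h_s(z)+h_s(z)$ with at most one summand surviving; the compensator $\mu(dz)\,ds$ of $\tilde\pi$, being continuous, contributes nothing to the jumps.

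Then I would apply the general It\^o formula in the form
\[\phi(X_t)=\phi(X_0)+\int_0^t D_i\phi(X_{s-})\,dX^i_s+\tfrac12\int_0^t D_iD_j\phi(X_s)\,d[X^{c,i},X^{c,j}]_s+\sum_{s\leq t}R_s,\]
where $R_s=\phi(X_s)-\phi(X_{s-})-D_i\phi(X_{s-})\Delta X^i_s$. Expanding the first integral into its $ds$, $dw^r$, $\pi$ and $\tilde\pi$ pieces, and writing $\sum_{s\le t}R_s$ as the $\pi$-integral of $\phi(X_{s-}+\bar h_s+h_s)-\phi(X_{s-})-D_i\phi(X_{s-})(\bar h^i_s+h^i_s)$, I would use \eqref{integrands} to split every $\pi$-integral into a part supported on $\{\bar h\neq0\}$ and a part supported on $\{h\neq0\}$. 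On the $\bar h$-part the first-order term $D_i\phi(X_{s-})\bar h^i_s$ recombines with $R_s$ to give $\int_0^t\int_Z(\phi(X_{s-}+\bar h_s)-\phi(X_{s-}))\,\pi(dz,ds)$. On the $h$-part the first-order term arising from the $\tilde\pi$-integral cancels the $-D_i\phi(X_{s-})h^i_s$ inside $R_s$, leaving $\int_0^t\int_Z(\phi(X_{s-}+h_s)-\phi(X_{s-}))\,\pi(dz,ds)$ minus the compensator $\int_0^t\int_Z D_i\phi(X_s)h^i_s\,\mu(dz)\,ds$; writing $\pi=\tilde\pi+\mu\,ds$ in the first of these and using that $X_{s-}=X_s$ for $ds$-a.e.\ $s$ reproduces exactly the $\tilde\pi$-integral and the Taylor-remainder $\mu\,ds$-integral appearing in \eqref{formula standard}.

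The step I expect to be the main obstacle is the integrability bookkeeping that licences the decomposition $\pi=\tilde\pi+\mu\,ds$ and the separate appearance of each term, since individually some of these integrals need not be absolutely convergent even when their combination is. Here the hypothesis $|h|\leq K$ does the work: together with $\phi\in C^2$ it gives, along the pathwise bounded cadlag trajectory of $X$ on $[0,T]$, the bounds $|\phi(X_{s-}+h_s)-\phi(X_{s-})|\leq C|h_s|$ and $|R_s|\leq C|h_s|^2$ on $\{h\neq0\}$, so that by \eqref{integrals} the $\tilde\pi$-integral is a well-defined locally square-integrable martingale and the $\mu\,ds$-remainder is absolutely convergent. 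A localisation by stopping times reducing $X$, $f$, $g$ and the jump sums to bounded quantities makes all the manipulations rigorous, after which the formula passes to the limit. I would also note at the outset that replacing $X_{s-}$ by $X_s$ in the $ds$- and $dw^r$-integrals is harmless, since the two differ only on the countable set of jump times.
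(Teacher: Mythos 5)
Your proposal is correct, but it takes a genuinely different route from the paper: the paper does not prove Theorem \ref{theorem standard} at all, it simply observes that the statement (with a finite-dimensional Wiener process) is a standard result proved for example in \cite{IW2011}, and leaves the extension to a sequence of independent Wiener processes as an exercise. You instead derive the formula from the general It\^o formula for $\bR^M$-valued semimartingales with jumps, identifying the continuous martingale part, the finite-variation part and the jump measure of $X$, and then reorganising the jump sum using the disjoint-support condition \eqref{integrands}. This is essentially the strategy the authors themselves sketch, for the \emph{unbounded} case, in the Remark following the proof of Theorem \ref{theorem0}: apply Theorem VIII.27 of \cite{DM1982} and rewrite. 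What your approach buys is a self-contained argument; what the paper's citation buys is brevity and the avoidance of exactly the bookkeeping you flag. One point to tighten: your intermediate expression ``$\int_0^t\int_Z(\phi(X_{s-}+h_s)-\phi(X_{s-}))\,\pi(dz,ds)$ minus the compensator $\int_0^t\int_Z D_i\phi(X_s)h^i_s\,\mu(dz)\,ds$'' consists of two terms that need not be individually absolutely convergent even under $|h|\le K$, since \eqref{integrals} only controls $\int_0^T\int_Z|h|^2\,\mu(dz)\,ds$ and not $\int_0^T\int_Z|h|\,\mu(dz)\,ds$. The clean rearrangement (which is exactly the algebra the paper performs, in the opposite direction, in the proof of Theorem \ref{theorem0}) is to keep the second-order remainder $J^{h_s}\phi(v)=\phi(v+h_s)-\phi(v)-D_i\phi(v)h^i_s$ together: the $h$-part of the jump sum is $\int_0^t\int_Z J^{h_s}\phi(X_{s-})\,\pi(dz,ds)$, which is absolutely convergent because $|J^{h_s}\phi|\le C|h_s|^2$ and $\int_0^T\int_Z|h|^2\,\pi(dz,ds)<\infty$ a.s.; one then splits this as $\tilde\pi+\mu\,ds$ (using $|J^{h_s}\phi|^2\le CK^2|h_s|^2$ for the $\tilde\pi$-part, which is where $|h|\le K$ enters) and absorbs the $\tilde\pi$-piece into the first-order $\tilde\pi$-integral to produce the two well-defined terms of \eqref{formula standard}. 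With that reorganisation your localisation argument closes the proof.
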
 
\begin{proof}
This theorem, with a finite dimensional Wiener process 
in place of an infinite sequence of independent Wiener processes   
is proved, for example, in \cite{IW2011}. The extension 
of it to our setting is a simple exercise left for the reader. 
\end{proof}
Notice that for $\phi(x)=|x|^p$ 
the last two integrals in \eqref{formula standard} may not exist  
without the additional condition that  
$h$ is bounded. Thus It\^o's formula \eqref{formula standard} 
does not hold in general for $\phi(x)=|x|^p$, $p\geq2$, 
under the conditions \eqref{integrands} and \eqref{integrals}. 

We prove Theorem \ref{theorem0} by rewriting 
It\^o formula \eqref{formula standard} 
into equation \eqref{Ito01} 
under the additional condition that $h$ is bounded, and  
we dispense with this condition by approximating 
$h$ by bounded functions.

\begin{proof}[Proof of Theorem \ref{theorem0}]
First in addition to the conditions \eqref{integrands} and 
\eqref{integrals} assume there is a constant $K$ such that 
$|h|\leq K$. By Taylor's formula 
for 
$$
I^{a}\phi(v):=\phi(v+a)-\phi(v)
\quad\text{and}\quad 
J^{a}\phi(v):
=I^{a}\phi(v)-D_i\phi(v)a^i,
$$ 
for each $v,a\in\bR^M$ we have 
\begin{equation}                                                               \label{Taylor}
|I^a\phi(v)|\leq \sup_{|x|\leq |a|+|v|}|D\phi(x)||a|, 
\quad |J^a\phi(v)|\leq \sup_{|x|\leq |a|+|v|}|D^2\phi(x)||a|^2,  
\end{equation}
where $|D\phi|^2:=\sum_{i=1}^M|D_i\phi|^2$ and 
$|D^2\phi|^2:= \sum_{i=1}^M\sum_{j=1}^M|D_iD_j\phi|^2$. 
Since $(X_t)_{t\in[0,T]}$ is a cadlag process, $R:=\sup_{t\leq T}|X_t|$ 
is a finite random variable. Thus we have  
\begin{equation}                                                             \label{Jintegral1}
\int_0^T\int_Z|J^{h_t(z)}\phi(X_{t-})|\mu(dz)\,dt
\leq \sup_{|x|\leq R+K}|D^2\phi(x)
|\int_0^T\int_Z|h_t(z)|^2\,\mu(dz)\,dt<\infty 
\end{equation}
and 
\begin{equation}                                                            \label{Jintegral2}
\int_0^T\int_Z|J^{h_t(z)}\phi(X_{t-})|^2\mu(dz)\,dt
\leq \sup_{|x|\leq R+K}|D^2\phi(x)|^2K^2
\int_0^T\int_Z|h_t(z)|^2\,\mu(dz)\,dt<\infty 
\end{equation}
almost surely.  Clearly, 
$$
\int_0^T\int_Z|D_i\phi(X_{t-})h^i_t(z)|^2\,\mu(dz)\,dt
\leq \sup_{|x|\leq R}|D\phi(x)|^2
\int_0^T\int_Z|h_t(z)|^2\,\mu(dz)\,dt<\infty \,\,(\rm{a.s.}).   
$$
Hence, by virtue of \eqref{Jintegral2} the stochastic It\^o integral 
$$
\int_0^t\int_Z\phi(X_{t-}+h_t(z))-\phi(X_t)\,\tilde\pi(dz,dt)
=\int_0^t\int_ZI^{h_t(z)}\phi(X_{t-})\tilde\pi(dz,dt)
$$
can be decomposed as
$$
\int_0^t\int_ZI^{h_t(z)}\phi(X_{t-})\tilde\pi(dz,dt)
=\int_0^t\int_ZJ^{h_t(z)}\phi(X_{t-})\,\tilde\pi(dz,dt)
+\int_0^t\int_ZD_i\phi(X_{t-})h^i_t(z)\,\tilde\pi(dz,dt), 
$$ 
and by virtue of \eqref{Jintegral1} and \eqref{Jintegral2}, 
$$
\int_0^t\int_ZJ^{h_t(z)}\phi(X_{t-})\,\tilde\pi(dz,dt)
+\int_0^t\int_ZJ^{h_t(z)}\phi(X_{t-})\, \mu(dz)\,dt
=\int_0^t\int_ZJ^{h_t(z)}\phi(X_{t-})\,\pi(dz,dt). 
$$
Hence
$$
\int_0^t\int_ZI^{h_t(z)}\phi(X_{t-})\,\tilde\pi(dz,dt)+
\int_0^t\int_ZJ^{h_t(z)}\phi(X_{t-})\, \mu(dz)\,dt
$$
$$
=\int_0^t\int_ZD_i\phi(X_{t-})h^i_t(z)\,\tilde\pi(dz,dt)
+\int_0^t\int_ZJ^{h_t(z)}\phi(X_{t-})\,\pi(dz,dt), 
$$
which shows that Theorem \ref{theorem0} holds 
under the additional condition that $|h|$ is bounded. 
To prove the theorem in full generality we approximate 
$h$ by $h^{(n)}=(h^{1n},...,h^{Mn})$,  where 
$h_t^{in}=-n\vee h_t^{i}\wedge n$ for integers $n\geq1$, and define 
$$
X^{(n)}_t:=X_0+\int_0^tf_s\,ds+\int_0^tg_s^{r}\,dw_s^r
+\int_0^t\int_Z\bar h_s(z)\,\pi(dz,ds)
+\int_0^t\int_Zh^{(n)}_s(z)\,\tilde{\pi}(dz,ds), \quad t\in[0,T]. 
\quad
$$
Clearly, for all $(\omega,t,z)$ 
\begin{equation}                                                      \label{h}
|h^{(n)}|\leq \min(|h|, nM)\quad
\text{and \quad $h^{(n)}\rightarrow h$\quad 
as $n\rightarrow \infty$}. 
\end{equation}
Therefore Theorem \ref{theorem0} for $X^{(n)}$ holds, and 
$$
\lim_{n\to\infty}\int_0^T\int_Z|h^{(n)}_t(z)-h_t(z)|^2\,\mu(dz)\,dt=0\,\,(\rm{a.s.}), 
$$
which implies 
$$
\sup_{t\leq T}|X^{(n)}_t-X_t|\to 0 \quad\text{in probability as $n\to\infty$.}
$$
Thus there is a strictly increasing subsequence 
of positive integers $(n_k)_{k=1}^{\infty}$  such that 
$$
\lim_{k\to\infty}\sup_{t\leq T}|X^{(n_k)}_t-X_t|=0\quad(\rm{a.s.}), 
$$ 
which implies  
$$
\rho:=\sup_{k\geq1}\sup_{t\leq T}|X^{(n_k)}_t|<\infty\quad (\rm{a.s.}). 
$$
Hence it is easy to pass to the limit $k\to\infty$ in $\phi(X_t^{(n_k)})$ 
and in the first two integral terms in the equation for $\phi(X_t^{(n_k)})$ in 
Theorem \ref{theorem0}. To pass to the limit in the other terms in this equation  
notice that since $\pi(dz,dt)$ is a counting measure of a point process, 
from the condition 
for $\bar h$  in \eqref{integrals} we get 
\begin{equation}                                                                    \label{esssup1}
\xi:={\esssup}\,|\bar h|<\infty\,\,(\rm{a.s.}),  
\end{equation}
where ${\esssup}$ denotes the essential supremum operator 
with respect to the measure $\pi(dz,dt)$ over $Z\times[0,T]$. 
Similarly, from the condition for $h$ we have
\begin{equation}                                                                     \label{esssup2}
\eta:={\esssup}\,|h|<\infty\,\,(\rm{a.s.}).   
\end{equation}
This can be seen by noting that 
for the sequence 
of predictable stopping times 
$$
\tau_j=\inf\left\{t\in[0,T]:\int_0^t\int_Z|h_s(z)|^2\,\mu(dz)\,ds\geq j \right\}, 
\quad\text{$j=1,2,...$}, 
$$
we have 
$$
E\int_0^T\int_Z{\bf1}_{t\leq\tau_j}|h_t(z)|^2\,\pi(dz,dt)
=E\int_0^T\int_Z{\bf1}_{t\leq\tau_j}|h_t(z)|^2\,\mu(dz)\,dt\leq j<\infty, 
$$
which gives 
$$
\int_0^T\int_Z|h_t(z)|^2\,\pi(dz,dt)<\infty 
\quad
\text{almost surely on 
$\Omega_j=\{\omega\in\Omega:\tau_j\geq T\}$ for each $j\geq1$.} 
$$
Since $(\tau_j)_{j=1}^{\infty}$ is an increasing sequence 
converging to infinity, we have $P(\cup_{j=1}^{\infty}\Omega_j)=1$, i.e., 
\begin{equation}                                                                      \label{pi}
\int_0^T\int_Zh^2_t(z)\,\pi(dz,dt)<\infty\,\,(\rm{a.s.})   
\end{equation}
which implies \eqref{esssup2}. 
 By \eqref{esssup1} and 
the first inequality 
in \eqref{Taylor}, we have  
$$
|I^{\bar h_t(z)}\phi(X_{t-}^{(n_k)})|+|I^{\bar h_t(z)}\phi(X_{t-})|
\leq 2\sup_{|x|\leq \rho+\xi}|D\phi(x)||\bar h_t(z)|<\infty
$$
almost surely for $\pi(dz,dt)$-almost every $(z,t)\in Z\times[0,T]$. 
Hence by Lebesgue's theorem on dominated convergence we get 
$$
\lim_{k\to\infty}\int_0^T\int_Z
|I^{\bar h_s(z)}\phi(X^{(n_k)}_{s-})-I^{\bar h_s(z)}\phi(X_{s-})|\,\pi(dz,ds)=0 \quad \rm{(a.s.)},  
$$
which implies that for $k\to\infty$ 
$$
\int_0^t\int_Z
I^{\bar h_s(z)}\phi(X^{(n_k)}_{s-})\,\pi(dz,ds)
\to
\int_0^t\int_Z I^{\bar h_s(z)}\phi(X_{s-})\,\pi(dz,ds)
$$
almost surely, uniformly in $t\in[0,T]$. Clearly, 
$$
|D_i\phi(X^{(n_k)}_{t-})h^{in_k}_t(z)|^2+|D_i\phi(X_{t-})h^{i}_t(z)|^2
\leq 2\sup_{|x|\leq \rho}|D\phi(x)|^2|h_t(z)|^2
$$
almost surely for all $(z,t)\in Z\times[0,T]$. Hence by Lebesgue's theorem 
on dominated convergence 
$$
\lim_{k\to\infty}\int_0^T\int_Z
|D_i\phi(X^{(n_k)}_{t-})h^{in_k}_t(z)-D_i\phi(X_{t-})h^{i}_t(z)|^2
\,\mu(dz)\,dt=0\quad \rm{(a.s.)},  
$$
which implies that for $k\to\infty$ 
$$
\int_0^t\int_Z
D_i\phi(X^{(n_k)}_{t-})h^{in_k}_t(z)\,\tilde\pi(dz,dt)
\to
\int_0^t\int_Z
D_i\phi(X_{t-})h^{i}_t(z)\,\tilde\pi(dz,dt)   
$$
in probability, uniformly in $t\in[0,T]$. 
Finally note that by using the second inequality in \eqref{Taylor} 
together with \eqref{esssup2} we have  
$$
|J^{h^{(n_k)}_t(z)}\phi(X_{t-}^{(n_k)})|+|J^{h_t(z)}\phi(X_{t-})|
\leq 2\sup_{|x|\leq \rho+\eta}|D^2\phi(x)||h_t(z)|^2
$$
almost surely for $\pi(dz,dt)$-almost every $(z,t)\in Z\times[0,T]$. Hence, 
taking into account \eqref{pi}, by Lebesgue's theorem on dominated convergence 
we obtain  
$$
\lim_{k\to\infty}\int_0^T\int_Z
|J^{h^{(n_k)}_t(z)}\phi(X^{(n_k)}_{t-})-J^{h_t(z)}\phi(X_{t-})|
\,\pi(dz,dt)=0\quad \rm{(a.s.)},  
$$
which implies that for $k\to\infty$ 
$$
\int_0^t\int_ZJ^{h^{(n_k)}_t(z)}\phi(X^{(n_k)}_{t-})\,\pi(dz,dt)
\to
\int_0^t\int_ZJ^{h_t(z)}\phi(X_{t-})\,\pi(dz,dt)
$$
almost surely, uniformly in $t\in[0,T]$, and finishes the proof of the theorem.  
\end{proof}
\begin{remark}
One can give a different proof of Theorem \eqref{theorem0} 
by showing that for finite measures $\mu$, 
the It\^o's formula 
for general semimartingales, Theorem VIII.27 in \cite{DM1982}, 
 applied to $(X_t)_{t\in[0,T]}$, can be rewritten as 
equation \eqref{Ito01}. Hence by an approximation procedure 
one can get the general case of $\sigma$-finite measures $\mu$. 
\end{remark}
To obtain Theorem \ref{theorem1} from Theorem \ref{theorem0} besides 
well-known Fubini theorems for deterministic integrals and stochastic integrals 
with respect to Wiener processes, see \cite{K2011}, 
we need the following Fubini theorems for stochastic integrals 
with respect to Poisson random measures and 
Poisson martingale measures, where $(\Lambda,\mathcal{S},m)$ 
denotes a measure space, with a $\sigma$-finite measure $m$ 
and a countably generated $\sigma$-algebra $\cS$.

\begin{theorem}                                                                    \label{theorem Fubini1}
Let $f=f(\omega,t,z,\lambda)$ 
be a 
$\mathcal{P}\otimes \mathcal{Z}\otimes \mathcal{S}$-measurable 
real function on $\Omega\times[0,T]\times Z\times\Lambda$ 
such that
\begin{equation}                                                                       \label{condition1}
\int_0^T\int_Z|f(t,z,\lambda)|^2\,\mu(dz)\,dt <\infty
\end{equation}
for every $\lambda\in\Lambda$ and $\omega\in\Omega$. 
Then there is an $\cF\otimes\cB([0,T])\otimes\cS$-measurable function 
$F=F(t,\lambda)$ such that it is cadlag in $t\in[0,T]$ for 
every $(\omega,\lambda)\in\Omega\times\Lambda$, 
for each $\lambda\in\Lambda$ the process $(F(t,\lambda))_{t\in[0,T]}$ 
is a locally square-integrable $\cF_t$-martingale  
and
\begin{equation}                                                              \label{regular}
F(t,\lambda)=\int_0^t\int_Zf(s,z,\lambda)\,\tilde{\pi}(dz,ds)
\quad\text{almost surely for all $t\in[0,T]$}.
\end{equation}
Moreover, if almost surely 
\begin{equation}                                                           \label{Fubini cond}
\int_\Lambda
\left( \int_0^T\int_Z|f(t,z,\lambda)|^2\,\mu(dz)\,dt 
\right)^{1/2}\,m(d\lambda)<\infty, 
\end{equation}
then almost surely
\begin{equation}                                                             \label{Fubini eq}
\int_\Lambda F(t,\lambda)\,m(d\lambda)
=\int_0^t\int_Z\int_\Lambda 
f(s,z,\lambda)\,m(d\lambda)\,\tilde{\pi}(dz,ds)
\quad\text{ for all $t\in[0,T]$}.
\end{equation}
\end{theorem}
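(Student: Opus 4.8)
The plan is to prove the two assertions by the classical two-stage scheme for stochastic Fubini theorems: first verify everything on a class of \emph{elementary} integrands, for which the stochastic integral against $\tilde\pi$ is an explicit finite sum, and then reach general $f$ by approximation, using the It\^o isometry and Doob's maximal inequality to control the errors. For the first assertion I would start with integrands of the form $f(\omega,t,z,\lambda)=\sum_{k=1}^{n}\xi_k(\omega,\lambda)\mathbf{1}_{(t_k,t_{k+1}]}(t)\mathbf{1}_{\Gamma_k}(z)$, where $0\le t_1<\dots<t_{n+1}\le T$, each $\Gamma_k\in\cZ$ has $\mu(\Gamma_k)<\infty$, and each $\xi_k$ is bounded and $\cF_{t_k}\otimes\cS$-measurable. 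For such $f$ the integral is $F(t,\lambda)=\sum_k\xi_k(\omega,\lambda)\big(\tilde\pi_{t\wedge t_{k+1}}(\Gamma_k)-\tilde\pi_{t\wedge t_k}(\Gamma_k)\big)$, which is at once $\cF\otimes\cB([0,T])\otimes\cS$-measurable, cadlag in $t$, and for each fixed $\lambda$ a square-integrable $\cF_t$-martingale satisfying \eqref{regular}.

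For a general $f$ obeying \eqref{condition1}, observe that the bound there holds for \emph{every} $\omega$, so that for each fixed $\lambda$, after the pathwise localisation by $\tau_j(\lambda)=\inf\{t:\int_0^t\int_Z|f(s,z,\lambda)|^2\mu(dz)\,ds\ge j\}$, the process $F(\cdot,\lambda)=\int_0^{\cdot}\int_Zf\,\tilde\pi$ is a well-defined cadlag locally square-integrable martingale. The delicate point is to select these per-$\lambda$ versions jointly measurably in $(\omega,t,\lambda)$. I would treat $\lambda\mapsto F(\cdot,\lambda)$ as a map into the separable metric space $\mathcal{D}$ of cadlag processes on $[0,T]$ endowed with the metric $\rho(X,Y)=E\big[\sup_{t\le T}(|X_t-Y_t|\wedge1)\big]$ of uniform convergence in probability. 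Approximation by the elementary integrands above shows, via the isometry and Doob's inequality, that this map is measurable; separability of $\mathcal{D}$ then lets me approximate it \emph{at a rate uniform in $\lambda$} by $\mathcal{D}$-valued simple functions $\sum_k\Phi_k\mathbf{1}_{B_k}(\lambda)$, each possessing an evident jointly measurable representative $\sum_k\Phi_k(\omega,t)\mathbf{1}_{B_k}(\lambda)$. Borel--Cantelli then produces a jointly measurable cadlag limit $F$ coinciding, for each $\lambda$, with the stochastic integral, which is \eqref{regular}.

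For the second assertion I would again begin with integrands simple in $\lambda$, $f^{(m)}=\sum_k f_k(\omega,t,z)\mathbf{1}_{A_k}(\lambda)$ with $m(A_k)<\infty$; there \eqref{Fubini eq} is merely the interchange of a finite sum with $\int_\Lambda\cdot\,m(d\lambda)$ and the linearity of the stochastic integral. To pass to general $f$ under \eqref{Fubini cond} I would localise by the $\lambda$-independent stopping times $\sigma_j=\inf\{t:\int_\Lambda(\int_0^t\int_Z|f|^2\mu(dz)\,ds)^{1/2}m(d\lambda)\ge j\}$, which by \eqref{Fubini cond} increase to $T$, and on each stochastic interval $[0,\sigma_j]$ estimate, by Minkowski's inequality for the $m$-integral followed by Doob's inequality,
$$
\Big(E\sup_{t\le\sigma_j}\Big|\int_\Lambda\big(F^{(m)}-F\big)(t,\lambda)\,m(d\lambda)\Big|^2\Big)^{1/2}
\le 2\int_\Lambda\Big(E\int_0^{\sigma_j}\!\!\int_Z|f^{(m)}-f|^2\,\mu(dz)\,dt\Big)^{1/2}m(d\lambda),
$$
which tends to $0$ by dominated convergence. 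The same bound shows $\int_\Lambda f^{(m)}\,m(d\lambda)\to\int_\Lambda f\,m(d\lambda)$ as integrands, so by the isometry the two sides of \eqref{Fubini eq} for $f^{(m)}$ converge to the two sides for $f$, and letting $j\to\infty$ removes the localisation.

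The step I expect to be the main obstacle is the joint-measurable regularisation in the second paragraph. The natural temptation---to approximate $f$ by elementary integrands with a summable error and take an a.s.\ limit of the integrals---fails, because the $L_2(\mu\otimes dt)$ convergence $f^{(m)}\to f$ cannot be made uniform in $\lambda$, so any a.s.\ convergent subsequence of the integrals would depend on $\lambda$ and could not be glued measurably. The resolution is to move the uniform approximation from the integrands to the \emph{integrals}: although $f^{(m)}\to f$ is not uniform in $\lambda$, the map $\lambda\mapsto F(\cdot,\lambda)$ lives in a separable metric space, so it \emph{can} be approximated at a rate $2^{-m}$ simultaneously for all $\lambda$, and it is these $\mathcal{D}$-valued simple functions, not the integrands, that are reassembled into a jointly measurable process. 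Once joint measurability is secured, the martingale property, \eqref{regular}, and the identity \eqref{Fubini eq} all follow from the elementary case and the isometry estimates above.
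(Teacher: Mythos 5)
There is a genuine gap in your proof of the identity \eqref{Fubini eq}, located exactly at the displayed estimate. The inequality itself (Minkowski for the $m$-integral, then Doob and the isometry for each fixed $\lambda$) is correct, but its right-hand side,
\[
\int_\Lambda\Big(E\int_0^{\sigma_j}\!\!\int_Z|f^{(m)}-f|^2\,\mu(dz)\,dt\Big)^{1/2}m(d\lambda),
\]
is not controlled by anything you have. The hypothesis \eqref{Fubini cond} and your stopping times $\sigma_j$ only give the \emph{pathwise} bound $\int_\Lambda\big(\int_0^{\sigma_j}\int_Z|f|^2\,\mu(dz)\,dt\big)^{1/2}m(d\lambda)\le j$, hence after Tonelli a bound on $\int_\Lambda E\big[(\cdots)^{1/2}\big]\,m(d\lambda)$; but by Jensen $\big(E[\,\cdots]\big)^{1/2}\ge E\big[(\cdots)^{1/2}\big]$, so the quantity you need sits on the \emph{wrong} side of the inequality and may well be $+\infty$ for every $j$ (the $\lambda$-wise quadratic variations $E\int_0^{\sigma_j}\int_Z|f(\cdot,\cdot,\lambda)|^2\mu\,dt$ are not individually controlled by $\sigma_j$). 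The condition \eqref{Fubini cond} is an $L_1$-type condition on the square root of the bracket, and the matching maximal inequality is the Davis ($p=1$ Burkholder--Davis--Gundy) inequality, $E\sup_{t\le T}|M_t|\le 3\,E\big[\langle M\rangle_T^{1/2}\big]$: this is what the paper uses, yielding $E\int_\Lambda\sup_t|F^{(m)}-F|\,m(d\lambda)\le 3\,E\int_\Lambda\big(\int_0^{T\wedge\sigma_j}\int_Z|f^{(m)}-f|^2\mu\,dt\big)^{1/2}m(d\lambda)$, which is $\le 3\,E[Q_m(T)\wedge j]$ in the paper's notation and tends to zero by bounded convergence. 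Replacing Doob's $L_2$ inequality by Davis's $L_1$ inequality (and working with convergence in probability of the suprema rather than in $L_2$) repairs this step; as written it fails.

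A secondary but real issue is your regularisation argument for \eqref{regular}. The space $\mathcal{D}$ of cadlag processes with the metric $\rho(X,Y)=E[\sup_{t\le T}(|X_t-Y_t|\wedge1)]$ is \emph{not} separable (the deterministic processes ${\bf1}_{[s,T]}$, $s\in[0,T]$, are at mutual distance $1$), so you cannot invoke separability of $\mathcal{D}$ to approximate $\lambda\mapsto F(\cdot,\lambda)$ uniformly by simple functions. What you actually need is separability of the \emph{range} of this map, which requires an argument (e.g.\ that $f$ generates a countably generated sub-$\sigma$-algebra of $\cP\otimes\cZ\otimes\cS$, so that $\{f(\cdot,\cdot,\cdot,\lambda):\lambda\in\Lambda\}$ is separable in the local $L_2$ topology on integrands, and then continuity of stochastic integration). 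The paper avoids this entirely: it runs a monotone class argument over integrands of the product form $c{\bf1}_{(r,s]}{\bf1}_U\varphi(\lambda)$ and builds the jointly measurable cadlag version at each limiting step via (a modification of) Lemma 2.1 of \cite{K2011}, which manufactures a regular version from a family of processes converging uniformly in probability. Your gluing of simple $\mathcal{D}$-valued approximants via Borel--Cantelli is essentially that lemma, so once you replace ``separability of $\mathcal{D}$'' by a proof that the range is separable, this part can be made to work; but as stated it rests on a false premise.
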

\begin{proof} The proof of this theorem is similar to that of Lemma 2.5 from \cite{K2011}. 
Let us call the function $F$, whose existence is stated in the theorem, a regular version of 
the stochastic integral process defined in the right-hand side of \eqref{regular}. 
Assume first that $\mu$ and $m$ are finite measures, and consider the 
space $\cH$ of $\mathcal{P}\otimes \mathcal{Z}\otimes \mathcal{S}$-measurable 
bounded real functions $f$ such that the conclusions of the theorem hold. 
Then it is easy to see that $\cH$ is a real vector space which contains the constants. 
Let $(f^n)_{n=1}^{\infty}$ be an increasing uniformly bounded sequence from $\cH$,   
and denote by $F^n$ the regular version of the stochastic integral of $f^n$. 
Thus, in particular, for each $\lambda\in\Lambda$ 
\begin{equation}                                                       \label{regularn}
F^n(t,\lambda)=\int_0^t\int_Zf^n(s,z,\lambda)\,\tilde{\pi}(dz,ds)
\quad\text{almost surely for all $t\in[0,T]$,}
\end{equation}
and almost surely 
\begin{equation}                                                             \label{nequation}
\int_\Lambda F^n(t,\lambda)\,m(d\lambda)
=\int_0^t\int_Z\int_\Lambda 
f^n(s,z,\lambda)\,m(d\lambda)\,\tilde{\pi}(dz,ds)
\quad\text{ for all $t\in[0,T]$}. 
\end{equation}
Set $f=\lim_{n\to\infty}f^n$. Then $f$ is a bounded 
$\mathcal{P}\otimes \mathcal{Z}\otimes \mathcal{S}$-measurable 
function and 
$$
\lim_{n\to\infty}\int_0^T\int_Z(f^n(t,z,\lambda)-f(t,z,\lambda))^2\,\mu(dz)\,dt=0
\quad
\text{for every $\lambda\in\Lambda$.}
$$
Consequently, for each $\lambda\in\Lambda$ 
the sequence $F^n(t,\lambda)$ converges 
in probability, uniformly in $t\in[0,T]$, and hence 
by a straightforward modification 
of Lemma 2.1 from \cite{K2011} there is a 
$\cF\otimes\cB([0,T])\otimes\cS$-measurable function 
$F=F(t,\lambda)$ such that it is cadlag in $t\in[0,T]$ for 
every $(\omega,\lambda)\in\Omega\times\Lambda$, 
for each $\lambda\in\Lambda$ the process $(F(t,\lambda))_{t\in[0,T]}$ 
is a locally square-integrable $\cF_t$-martingale,   
and \eqref{regular} holds. Now we show that almost surely \eqref{Fubini eq} 
also holds, by taking $n\to\infty$ in equation \eqref{nequation}. 
Clearly, by Lebesgue's theorem on dominated convergence we have 
$$
\lim_{n\to\infty}
\int_0^T\int_Z\left(\int_{\Lambda}(f^n-f)(t,z,\lambda)\,m(d\lambda)\right)^2
\,\mu(dz)\,dt=0
$$
for every $\omega\in\Omega$, which implies that for 
$n\to\infty$ 
\begin{equation}                            \label{conv1}
\int_0^t\int_Z\int_\Lambda 
f^n(s,z,\lambda)\,m(d\lambda)\,\tilde{\pi}(dz,ds)\to 
\int_0^t\int_Z\int_\Lambda 
f(s,z,\lambda)\,m(d\lambda)\,\tilde{\pi}(dz,ds)
\end{equation}
in probability, uniformly in $t\in[0,T]$. By the Davis inequality 
$$
E\int_{\Lambda}\sup_{t\in[0,T]}|F^n-F|(t,\lambda)\,m(d\lambda)
\leq 3\int_{\Lambda}E\left(\int_0^T\int_Z|f^n(t,z)-f(t,z)|^2\,\mu(dz)\,dt\right)^{1/2}
\,m(d\lambda), 
$$
and the right-hand side of this inequality converges to zero by virtue of 
Lebesgue's theorem on dominated convergence again. Hence 
for $n\to\infty$ 
\begin{equation}                                \label{conv2}
\int_{\Lambda}F^n(t,\lambda)\,m(d\lambda)\to 
\int_{\Lambda}F(t,\lambda)\,m(d\lambda)
\quad
\text{in probability, uniformly in $t\in[0,T]$},  
\end{equation}
and equation \eqref{Fubini eq} follows. 
Thus we have proved that if $f$ is the limit of an 
increasing uniformly bounded sequence of functions $f^n$ from $\cH$ 
then $f$ belongs to $\cH$. Let $\cC$ denote the class of functions $f$ 
of the form $f(t,z,\lambda)=c{\bf1}_{(r,s]}{\bf1}_U\varphi(\lambda)$, 
for  $0\leq r\leq s\leq T$, bounded $\cF_r$-measurable random 
variables $c$, sets $U\in\cZ$ and bounded $\cS$-measurable 
real functions $\varphi$. Then 
$$
\int_0^t\int_Zf(s,z,\lambda)\,\tilde\pi(dz,ds)=c\varphi(\lambda)
\tilde\pi((r\wedge t, s\wedge t]\times U)=:F(t,\lambda), \quad t\in[0,T],\,\lambda\in\Lambda 
$$
is a regular version of the stochastic integral of $f$, 
and it is easy to see that \eqref{Fubini eq} 
holds. Notice that $\cC$ is closed with respect to the multiplication of functions, 
and the $\sigma$-algebra generated by  
$\cC$ on $\Omega\times[0,T]\times Z\times\Lambda$
 is $\cP\otimes\cZ\otimes\cS$. 
Consequently, by the well-known Monotone Class Theorem, $\cH$ contains 
all $\mathcal{P}\otimes \mathcal{Z}\otimes \mathcal{S}$-measurable bounded 
real functions on $\Omega\times[0,T]\times Z\times\Lambda$. 

Consider now a  
$\mathcal{P}\otimes \mathcal{Z}\otimes \mathcal{S}$-measurable function $f$ 
satisfying \eqref{condition1}, and for every integer $n\geq1$  
define $f^n=-n\vee f\wedge n$. Then clearly, $f^n$ is bounded, 
$\mathcal{P}\otimes \mathcal{Z}\otimes \mathcal{S}$-measurable, and satisfies 
\eqref{condition1} and \eqref{Fubini cond}. Hence by virtue of what we have proved 
above, there is a regular version $F^n$ of the stochastic integral process of $f^n$, 
i.e., in particular, with this $F^n$ and $f^n=-n\vee f\wedge n$ equations 
\eqref{regularn} and \eqref{nequation} hold. Clearly, 
$\lim_{n\to\infty}f^n=f$ and $|f^n-f|\leq |f|$ for 
all $\omega\in\Omega$, $t\in[0,T]$, $z\in Z$ and $\lambda\in\Lambda$,  
which allow us to repeat the above arguments to show the existence 
of a regular version $F$ for the stochastic integral process of $f$,  
and to get  \eqref{conv1} if \eqref{Fubini cond} also holds.  
To obtain also  \eqref{conv2}, under the additional assumption 
\eqref{Fubini cond}, we introduce 
the process
$$
Q(t)=\int_{\Lambda}
\left(\int_0^t\int_Zf^2(s,z,\lambda)\,\mu(dz)\,ds\right)^{1/2}\,m(d\lambda), 
\quad t\in[0,T], 
$$
and the random time $\tau_{\delta}=\inf\{t\in[0,T]: Q(t)\geq\delta\}$ for $\delta>0$. 
Then $Q$ is a continuous $\cF_t$-adapted process.  Thus $\tau_{\delta}$ 
is an $\cF_t$-stopping time and for every $\omega\in\Omega$ 
$$
Q_n(t):=\int_{\Lambda}\left(\int_0^t\int_Z|f-f^n|^2(s,z,\lambda)\,
\mu(dz)\,ds\right)^{1/2}\,m(d\lambda)
$$
\begin{equation}                                          \label{Q}
\leq Q(t)
\leq\delta\quad\text{for $t\leq T\wedge\tau_{\delta}$}. 
\end{equation}
Hence for any $\varepsilon>0$ by the Markov and Davis inequalities 
$$
P\left(\int_{\Lambda}\sup_{t\leq T}|F^n-F|(t,\lambda)\,m(d\lambda)\geq \varepsilon\right)
$$
$$
\leq P\left(
\int_{\Lambda}\sup_{t\leq T}|F^n-F|(t\wedge\tau_{\delta},\lambda)\,m(d\lambda)
\geq \varepsilon 
\right)
+P\left(\tau_{\delta}<T\right)
$$
\begin{equation}                                                                \label{Qn}
\leq \varepsilon^{-1}E(Q_n(T)\wedge\delta)
+P\left(Q(T)\geq\delta\right). 
\end{equation}
Letting here first $n\to\infty$ and then $\delta\to\infty$ we get 
\begin{equation}                                                                    \label{convn}
\lim_{n\to\infty}
P\left(\int_{\Lambda}\sup_{t\leq T}
|F^n-F|(t,\lambda)\,m(d\lambda)\geq \varepsilon\right)
=0\quad\text{for any $\varepsilon>0$}, 
\end{equation}
which implies \eqref{conv2} in the new situation, 
and finishes the proof of the theorem 
under the additional condition that $\mu$ and $m$ are finite measures.  

In the general case 
of $\sigma$-finite measures $\mu$ and $m$ for every integer $n\geq1$ we define    
$\tilde \pi_n$, $\mu_n$ and $m_n$ by
$$
\tilde \pi_n(F)=\tilde \pi(F\cap(Z_n\times(0,T])), \quad \mu_n(A)=\mu(A\cap Z_n), 
\quad m_n(B)=m(B\cap\Lambda_n)
$$
for $F\in\cZ\otimes\cB(0,T)$, $A\in\cZ$ and $B\in\cS$, 
where $Z_n\in\cZ$ and $\Lambda_n\in\cS$ are sets 
such that $\mu(Z_n)<\infty$, $m(\Lambda_n)<\infty$, $Z_n\subset Z_{n+1}$, 
$\Lambda_n\subset \Lambda_{n+1}$ 
for every $n\geq1$, and  
$\cup_{n}Z_n=Z$ and $\cup_{n}\Lambda_n=\Lambda$. 
It is easy to see that $\tilde \pi_n$ 
is a Poisson martingale measure with characteristic measure $\mu_n$.  
Let $f$ be a 
$\mathcal{P}\otimes \mathcal{Z}\otimes \mathcal{S}$-measurable function 
satisfying \eqref{condition1}. Then clearly, $f$ satisfies \eqref{condition1} also 
with $\mu_n$ in place of $\mu$ and $Z_n$ in place of $Z$.  Hence 
by what we have already proved above, there is a regular 
version $F_n(t,\lambda)$ of the stochastic integral of 
$f$ (over $Z_n\times(0,t]$) 
with respect to $\tilde\pi_n$, i.e., in particular,  
for each $\lambda\in\Lambda_n$ 
$$
F_n(t,\lambda)=\int_0^t\int_{Z_n}f(s,z,\lambda)\,\tilde\pi_n(dz,ds)
=\int_0^t\int_{Z}{\bf1}_{Z_n}f(s,z,\lambda)\,\tilde\pi(dz,ds) 
$$
almost surely for all $t\in[0,T]$, 
and if $f$ satisfies also \eqref{Fubini cond}, then almost surely 
$$
\int_{\Lambda_n}F_n(t,\lambda)\,m_n(d\lambda)
=\int_0^t\int_{Z_n}\int_{\Lambda_n}f(s,z,\lambda)\,m_n(d\lambda)\,\tilde\pi_n(dz,ds)
$$
\begin{equation}                                                                 \label{nFubini}
=\int_0^t\int_{Z}\int_{\Lambda}{\bf1}_{Z_n}{\bf1}_{\Lambda_n}f(s,z,\lambda)
\,m(d\lambda)\,\tilde\pi(dz,ds)
\quad\text{for all $t\in[0,T]$.}
\end{equation}
Clearly, $\lim_{n\to\infty}f{\bf1}_{Z_n}=f$ and $|f-f{\bf1}_{Z_n}|\leq |f|$ for 
all $\omega\in\Omega$, $t\in[0,T]$, $z\in Z$ and $\lambda\in\Lambda$. 
Hence 
$$
\lim_{n\to\infty}\int_0^T\int_{Z}(f-f{\bf1}_{Z_n})^2(s,z,\lambda)\,\mu(dz)\,ds=0
\quad
\text{for all $\omega\in\Omega$, and $\lambda\in\Lambda$},  
$$
which, just like before, implies the existence of a 
regular version $F(t,\lambda)$ of the 
stochastic integral of $f$ with respect to $\tilde\pi$ (over $Z\times(0,t]$). 
If $f$ satisfies also \eqref{Fubini cond},  then we have  
$$
\lim_{n\to\infty} 
\int_0^T\int_{Z}
\left(
\int_{\Lambda}(f-{\bf1}_{Z_n}{\bf1}_{\Lambda_n}f)(s,z,\lambda)
\,m(d\lambda)
\right)^2
\,\mu(dz)\,ds=0\quad\text{for all $\omega\in\Omega$},  
$$
which implies 
$$
\int_0^t\int_{Z}\int_{\Lambda}{\bf1}_{Z_n}{\bf1}_{\Lambda_n}f(s,z,\lambda)
\,m(d\lambda)\,\tilde\pi(dz,ds)\to 
\int_0^t\int_{Z}\int_{\Lambda}f(s,z,\lambda)
\,m(d\lambda)\,\tilde\pi(dz,ds)
$$
in probability, uniformly in $t\in[0,T]$, as $n\to\infty$. 
Introducing the stopping 
time $\tau_{\delta}$ as before, we have \eqref{Q}, \eqref{Qn}, and hence 
\eqref{convn} with $F_n{\bf1}_{\Lambda_n}$ 
and $f{\bf1}_{\Lambda_n}{\bf1}_{Z_n}$ in place of $F^n$ 
and $f^n$, respectively. Consequently, letting $n\to\infty$ in 
\eqref{nFubini} we obtain \eqref{Fubini eq}, 
which finishes the proof of the theorem. 
\end{proof}
\begin{remark}
There is a Fubini theorem for stochastic 
integrals with respect to semimartingales 
in \cite{Pr2005}, see Theorem 65 in Chapter lV.  
Its integrability condition applied to our situation 
reads as 
\begin{equation}                                                      \label{ProtterFubini}
\int_0^T\int_Z\int_\Lambda
|f(t,z,\lambda)|^2\,m(d\lambda)\,\mu(dz)\,dt 
<\infty  \quad(a.s.), 
\end{equation}
which for finite measures $m$ is stronger than condition \eqref{Fubini cond}. 
\end{remark}
We also need a Fubini theorem for integrals against 
the Poisson random measure $\pi(dz,dt)$, 
that we formulate it as follows.
\begin{theorem}                                                                                \label{theorem Fubini2}
Let $g=g(\omega,t,z,\lambda)$ be a real-valued 
$\cP\otimes\cZ\otimes\cS$-measurable function on 
$\Omega\times[0,T]\times Z\times\Lambda$ such that
\begin{equation}                                                                \label{condition2}
\int_0^T\int_Z|g(t,z,\lambda)|\,\mu(dz)\,dt<\infty
\end{equation}
for each $\lambda\in\Lambda$ and $\omega\in\Omega$. 
Then there exists an $\cF\otimes \cB([0,T])\otimes\cS$-measurable 
function $G=G(t,\lambda)$ such that it is cadlag in $t\in[0,T]$ 
for each $(\omega,\lambda)\in\Omega\times\Lambda$, 
for each $\lambda\in\Lambda$ the process $(G(t,\lambda))_{t\in[0,T]}$ 
is locally integrable and $\cF_t$-adapted, and
$$
G(t,\lambda)=\int_0^t\int_Zg(s,z,\lambda)\,\pi(dz,ds)
$$
almost surely for all $t\in[0,T]$. Furthermore, if almost surely 
$$
\int_\Lambda\int_0^T\int_Z|g(t,z,\lambda)|\,\mu(dz)\,dt\,m(d\lambda)<\infty,
$$
then 
$$
\int_\Lambda G(t,\lambda)\,m(d\lambda)
=\int_0^t\int_Z\int_\Lambda g(s,z,\lambda)\,m(d\lambda)\,\pi(dz,ds)
$$
almost surely for all $t\in[0,T]$.
\end{theorem}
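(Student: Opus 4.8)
The plan is to mirror the proof of Theorem~\ref{theorem Fubini1}, exploiting the decisive simplification that $\pi(dz,dt)$ is, for each fixed $\omega$, a genuine (positive, $\sigma$-finite) counting measure. Hence $\int_0^t\int_Z g(s,z,\lambda)\,\pi(dz,ds)$ is an ordinary pathwise integral; there is no martingale structure to exploit and, in particular, no need for the Davis inequality that drove the argument for Theorem~\ref{theorem Fubini1}. Positivity of $\pi$ lets every passage to the limit be carried out pathwise by monotone or dominated convergence, so that the final Fubini identity will be nothing but the classical Tonelli--Fubini theorem for the product of the positive measures $\pi(dz,ds)$ and $m(d\lambda)$ applied $\omega$ by $\omega$. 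The only genuinely non-routine step, and thus the main obstacle, is the selection of a single $\cF\otimes\cB([0,T])\otimes\cS$-measurable, cadlag-in-$t$ version $G$; this I handle by the Monotone Class scheme below. As before I reduce first to finite measures $\mu$ and $m$ and recover the $\sigma$-finite case at the end by the exhaustion $Z=\bigcup_n Z_n$, $\Lambda=\bigcup_n\Lambda_n$.

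First I would check that condition \eqref{condition2} makes the pathwise integral meaningful. For fixed $\lambda$ the function $g(\cdot,\cdot,\cdot,\lambda)$ is $\cP\otimes\cZ$-measurable and $\mu\,dt$-integrable for every $\omega$, so repeating verbatim the stopping-time argument used to pass from \eqref{integrals} to \eqref{pi} in the proof of Theorem~\ref{theorem0} (with $|g(\cdot,\cdot,\cdot,\lambda)|$ in place of $|h|^2$) yields $\int_0^T\int_Z|g(t,z,\lambda)|\,\pi(dz,dt)<\infty$ almost surely. Thus $t\mapsto\int_0^t\int_Z g(s,z,\lambda)\,\pi(dz,ds)$ is a.s. well defined, cadlag and of finite variation for each $\lambda$, and local integrability and adaptedness are inherited from the explicit form on the generating class and preserved under the limits taken below.

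For the construction of the regular version and the first Fubini identity I would run a Monotone Class argument parallel to Theorem~\ref{theorem Fubini1}. On the generating class $\cC$ of functions $g(t,z,\lambda)=c\mathbf{1}_{(r,s]}\mathbf{1}_U\varphi(\lambda)$ the integral is explicit, namely $G(t,\lambda)=c\varphi(\lambda)\,\pi((r\wedge t,s\wedge t]\times U)$, which is manifestly $\cF\otimes\cB([0,T])\otimes\cS$-measurable, cadlag, adapted, locally integrable, and for which the Fubini identity is immediate. Letting $\cH$ be the class of bounded $\cP\otimes\cZ\otimes\cS$-measurable $g$ for which all conclusions hold, the stability of $\cH$ under increasing bounded limits is where positivity of $\pi$ does the work: if $g^n\uparrow g$ boundedly, then for each $\lambda$ the integrals $\int_0^t\int_Z g^n\,\pi$ increase to $\int_0^t\int_Z g\,\pi$ by monotone convergence, a straightforward modification of Lemma~2.1 of \cite{K2011} produces a jointly measurable cadlag limit $G$, and the Fubini identity passes to the limit by monotone convergence in both the $\pi$- and the $m$-integral. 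The Monotone Class Theorem then gives that $\cH$ contains all bounded $\cP\otimes\cZ\otimes\cS$-measurable functions, and truncation $g^n=-n\vee g\wedge n$ together with dominated convergence (dominant $|g|$, $\pi$-integrable by the previous paragraph, the increments $\int_0^t\int_Z|g^n-g|\,\pi$ being monotone in $t$ and hence converging uniformly in $t$) extends the regular-version statement to every $g$ satisfying \eqref{condition2}.

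Finally, for the Fubini identity under the stronger hypothesis $\int_\Lambda\int_0^T\int_Z|g|\,\mu(dz)\,dt\,m(d\lambda)<\infty$ almost surely, I would first transfer this $\mu$-integrability to $\pi$. The function $\bar g(\omega,t,z):=\int_\Lambda|g(\omega,t,z,\lambda)|\,m(d\lambda)$ is $\cP\otimes\cZ$-measurable by Tonelli and satisfies $\int_0^T\int_Z\bar g\,\mu\,dt<\infty$ a.s.; the same stopping-time/compensator argument as above then gives $\int_0^T\int_Z\bar g\,\pi<\infty$, that is $\int_\Lambda\int_0^T\int_Z|g|\,\pi(dz,dt)\,m(d\lambda)<\infty$ almost surely. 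With this pathwise integrability of the positive integrand in hand, the classical Tonelli--Fubini theorem applied $\omega$ by $\omega$ to the product measure $\pi(dz,ds)\otimes m(d\lambda)$ yields the stated identity directly, and the truncation together with the $\sigma$-finite exhaustion are closed by dominated convergence with dominant $|g|$, completing the proof.
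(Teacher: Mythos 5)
Your proposal is correct and follows essentially the same route as the paper, whose own proof is only a two-line remark: invoke the identity $E\int_0^T\int_Z|f|\,\pi(dz,ds)=E\int_0^T\int_Z|f|\,\mu(dz)\,ds$ and modify the proof of Theorem \ref{theorem Fubini1} accordingly. You carry out exactly that modification --- your stopping-time argument is the localized form of that identity, and your observation that the final identity reduces to the classical Fubini--Tonelli theorem applied pathwise to the positive product measure $\pi(dz,ds)\otimes m(d\lambda)$ is a legitimate (indeed the natural) way to close the argument without any analogue of the Davis inequality.
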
 
\begin{proof}
One knows, see e.g. \cite{IW2011} that 
$$
E\int_0^T\int_{Z}|f_s(z)|\,\pi(dz, ds)=E\int_0^T\int_{Z}|f_s(z)|\,\mu(dz)\,ds
$$
for $\cP\otimes\cS$-measurable real-valued functions $f$ 
on $\Omega\times[0,T]\times Z$, 
when the right-hand side of the above equation is finite. 
Using this identity, we can prove this 
theorem by a straightforward modification of 
the proof of Theorem \ref{theorem Fubini1} 
above. 
\end{proof}

For $\sigma$-finite measure spaces $(\Lambda_i,\cS_i,\mu_i)$, 
a separable real Banach space $V$ 
and $p_i\in[1,\infty)$ for $i=1,2$ let $L_{p_1,p_2}$ 
denote the space of $V$-valued 
$\cS_1\otimes S_2$-measurable functions $f=f(x,y)$ 
of $(x,y)\in\Lambda_1\times\Lambda_2$,  
such that 
$$
\int_{\Lambda_1}\left(\int_{\Lambda_2}|f(x,y)|_V^{p_2}
\,\mu_2(dy)\right)^{p_1/p_2}\,\mu_1(dx)<\infty.  
$$
Assume that $(\Lambda_2,\cS_2,\mu_2)$ is separable, 
and let $L_{p_1}(L_{p_2}(V))$ 
denote the space of $\cS_1$-measurable functions $f$ 
mapping $\Lambda_1$ into the space 
$L_{p_2}(V)=L_{p_2}((\Lambda_2,\cS_2,\mu_2),V)$ 
equipped with the Borel $\sigma$-algebra,  
such that 
$$
\int_{S_1}|f(x)|^{p_1}_{L_{p_2}(V)}\,\mu_1(dx)<\infty.
$$
Then we have the following lemma.
\begin{lemma}                                           \label{lemma L}
The spaces $L_{p_1,p_2}$ and $L_{p_1}(L_{p_2})$ 
are the same in the sense that for each  
$f$ from $L_{p_1}(L_{p_2})$ there is $\bar f\in L_{p_1,p_2}$ 
such that for every $x\in\Lambda_1$ we have 
$\bar f(x,y)=f(x,y)$ for $\mu_2$-a.e. $y\in\Lambda_2$, 
and for each $g\in L_{p_1,p_2}$ 
there is $\tilde g\in L_{p_1}(L_{p_2})$ 
such that for $\mu_1$-a.e. $x\in \Lambda_1$ we have 
$g(x,y)=\tilde g(x,y)$ for all $y\in\Lambda_2$. 
\end{lemma}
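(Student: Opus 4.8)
The plan is to realise the natural ``section'' correspondence $g\mapsto(x\mapsto g(x,\cdot))$ as an isometric identification of the two spaces, settling the measurability issues by density of simple functions together with the completeness of the relevant $L_p$ spaces. First I would record two facts that make everything run. Since $V$ is separable and $(\Lambda_2,\cS_2,\mu_2)$ is separable and $\sigma$-finite, the space $L_{p_2}(V)$ is a separable Banach space; and by Tonelli's theorem every $\cS_1\otimes\cS_2$-measurable $V$-valued $f$ satisfies
$$
\int_{\Lambda_1}|f(x,\cdot)|_{L_{p_2}(V)}^{p_1}\,\mu_1(dx)
=\int_{\Lambda_1}\Big(\int_{\Lambda_2}|f(x,y)|_V^{p_2}\,\mu_2(dy)\Big)^{p_1/p_2}\,\mu_1(dx),
$$
so the two norms literally coincide on jointly measurable functions. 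I would also invoke the Pettis measurability theorem: because $L_{p_2}(V)$ is separable, an $L_{p_2}(V)$-valued map is strongly measurable iff it is an a.e.\ limit of simple functions, and strong measurability is preserved under a.e.\ limits.

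For the direction $g\in L_{p_1,p_2}\rightsquigarrow\tilde g\in L_{p_1}(L_{p_2})$ --- the easier one --- I would argue as follows. By Tonelli the inner integral $\int_{\Lambda_2}|g(x,y)|_V^{p_2}\,\mu_2(dy)$ is finite for $\mu_1$-a.e.\ $x$; for such $x$ the section $y\mapsto g(x,y)$ is $\cS_2$-measurable and lies in $L_{p_2}(V)$, so I set $\tilde g(x):=[g(x,\cdot)]$ (and $\tilde g(x):=0$ on the exceptional $\mu_1$-null set). With this choice $\tilde g(x,y)=g(x,y)$ holds for all $y$ for $\mu_1$-a.e.\ $x$ by construction, so the only thing to verify is that $x\mapsto\tilde g(x)$ is strongly measurable into $L_{p_2}(V)$. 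This I would obtain by approximating $g$ in the mixed norm by $\cS_1\otimes\cS_2$-simple functions $g_n=\sum_k\mathbf{1}_{A_k\times B_k}v_k$, for which $x\mapsto g_n(x,\cdot)$ is patently a simple $L_{p_2}(V)$-valued map; the norm identity gives $\tilde g_n\to\tilde g$ in $L_{p_1}(L_{p_2})$, a subsequence converges in $L_{p_2}(V)$ for $\mu_1$-a.e.\ $x$, and closedness of strong measurability under a.e.\ limits finishes the step.

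The converse direction $f\in L_{p_1}(L_{p_2})\rightsquigarrow\bar f\in L_{p_1,p_2}$ is the delicate one, because the statement demands a genuine representative $\bar f(x,\cdot)=f(x)$ for \emph{every} $x$, not merely $\mu_1$-a.e.\ $x$. Here I would fix a countable dense set $\{w_j\}\subset L_{p_2}(V)$ with fixed $\cS_2$-measurable $V$-valued representatives $\hat w_j$, and build finitely-valued $\cS_1$-measurable maps $\sigma_n$ with values in $\{w_1,\dots,w_n\}$ by a greedy nearest-point rule, so that $\sigma_n(x)\to f(x)$ in $L_{p_2}(V)$ for every $x$; their representatives $\bar\sigma_n(x,y):=\sum_j\mathbf{1}_{\{\sigma_n=w_j\}}(x)\hat w_j(y)$ are jointly measurable. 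The key trick is to pass to a subsequence whose rate of convergence is controlled uniformly in $x$: the indices $N_i(x):=\min\{N:\sup_{k,l\ge N}|\sigma_k(x)-\sigma_l(x)|_{L_{p_2}(V)}\le 2^{-i}\}$ are $\cS_1$-measurable, being countable intersections of measurable sets, and setting $\tau_i:=\sigma_{N_i}$ gives $|\tau_{i+1}(x)-\tau_i(x)|_{L_{p_2}(V)}\le 2^{-i}$ for every $x$. Consequently $\sum_i|\bar\tau_{i+1}(x,\cdot)-\bar\tau_i(x,\cdot)|_{L_{p_2}(V)}<\infty$ for every $x$, which forces $\sum_i|\bar\tau_{i+1}(x,y)-\bar\tau_i(x,y)|_V<\infty$ for $\mu_2$-a.e.\ $y$, so that the $V$-valued limit $\bar f(x,y):=\lim_i\bar\tau_i(x,y)$ exists for $\mu_2$-a.e.\ $y$, for every $x$. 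Defining $\bar f$ as this limit on the jointly measurable set where it exists and $0$ elsewhere yields a jointly measurable $\bar f$ with $\bar f(x,\cdot)=f(x)$ in $L_{p_2}(V)$ for every $x$; the norm identity together with $f\in L_{p_1}(L_{p_2})$ then gives $\bar f\in L_{p_1,p_2}$. I expect this measurable-modulus-of-convergence construction to be the main obstacle; the remaining ingredients --- Tonelli, Pettis, the density/monotone-class arguments, and completeness --- are routine.
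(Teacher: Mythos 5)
Your proof is correct and follows essentially the same strategy as the paper's: for the delicate direction both arguments approximate $f$ by countably valued $\cS_1$-measurable maps built from a countable dense subset of $L_{p_2}(V)$ (with jointly measurable representatives), arrange a geometric convergence rate that is uniform in $x$, and then deduce convergence of the representatives for $\mu_2$-a.e.\ $y$ for \emph{every} $x$. The only differences are cosmetic: the paper picks approximants within $2^{-n-1}$ of $f(x)$ for all $x$ at the outset, so your measurable-modulus-of-convergence step is not needed, and it settles the easy direction by checking via Fubini that preimages of balls in $L_{p_2}(V)$ are in $\cS_1$, rather than by density of simple functions and a.e.\ limits.
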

\begin{proof}
Due to the separability of $(\Lambda_2,\cS_2,\mu_2)$ 
and $V$, there are countable subsets 
$\cS_0\subset\cS_2$ and $V_0\subset V$ such that the space $\cV$ of functions 
$g$ of the form 
$$
g(y)=\sum_{i=1}^N{\bf1}_{\Gamma_i}(y)v_i 
\quad 
\text{for $\Gamma_i\in \cS_0$, $\mu_2(\Gamma_i)<\infty$, $v_i\in V_0$, $N=1,2,...,$}
$$
is a countable dense subspace of $L_{p_2}(V)$.  
Hence for any $\cS_1$-measurable function 
$$
f:\Lambda_1\to L_{p_2}(V)
$$ 
there is a sequence $(f^n)_{n=1}^{\infty}$ of $\cV$-valued functions 
of the form 
$$
f^n(x)=\sum_{i=1}^{\infty}{\bf1}_{F^n_i}(x)g^n_i,
$$
such that  
$F^n_i\in\cS_1$, $F^n_i\cap F^n_j=\emptyset$ for $i\neq j$, $g^n_i\in\cV$  
and 
$$
|f^n(x)-f(x)|_{L_{p_2}(V)}<2^{-n-1}\quad \text{for 
all $x\in\Lambda_1$}  
$$
for $n\geq1$. Thus for each $x\in \Lambda_1$ for the set 
$$
A_n(x)=\{y\in S_2: |f^{n+1}(x,y)-f^n(x,y)|_{V}\geq n^{-2}\}\in\cS_2
$$ 
we have $\mu_2(A_n(x))\leq n^{p_2}2^{-p_2n}$,  
which, due to $\sum_{n=1}^{\infty}\mu_2(A_n(x))<\infty$, 
implies that for each $x\in\Lambda_1$ 
the sequence $(f^n(x,y))_{n=1}^{\infty}$ is convergent 
in $V$ for $\mu_2$-almost every $y\in\Lambda_2$. 
Define 
$$
B=\{(x,y)\in\Lambda_1\times\Lambda_2: 
(f^n(x,y))_{n=1}^{\infty}\,\,\text{is convergent in $V$}\}, 
$$ 
$$
\bar f(x,y)=\begin{cases}
\lim_{n\to\infty}f^n(x,y)\,\,\text{for $(x,y)\in B$}\\
0\in V \quad\qquad\qquad\text{for $(x,y)\notin B$}. 
\end{cases}
$$
Then $B\in\cS_1\otimes\cS_2$, and hence $\bar f$ 
is $\cS_1\otimes\cS_2$-measurable.  
Moreover,  $\bar f(x,y)=f(x,y)$ for $\mu_2$-almost every 
$y\in\Lambda_2$ for every $x\in\Lambda_1$. 
Assume now that $g\in L_{p_1,p_2}$. Then $|g(x,\cdot)|_{L_{p_2}(V)}$ is an 
$\cS_1$-measurable function of $x\in\Lambda_1$, 
with values in $[0,\infty]$. In particular, 
$$
A:=\{x\in\Lambda_1: |g(x,\cdot)|_{L_{p_2}(V)}<\infty\}\in \cS_1,  
$$
and $\mu_1(\Lambda_1\setminus A)=0$ by Fubini's theorem. 
For the function $\tilde g(x,y)={\bf1}_A(x)g(x,y)$ by Fubini's theorem we have 
$$
\{x\in\Lambda_1:|\tilde g(x)-e|_{L_{p_2}(V)}<R\}\in\cS_1
$$
for any $e\in L_{p_2}(V)$ and $R>0$. Consequently, 
$\tilde g$ is an $\cS_1$-measurable 
$L_{p_2}(V)$-valued function on $\Lambda_1$. 
In particular, $\tilde g\in L_{p_1}(L_{p_2})$, 
and clearly, for $\mu_1$-almost every $x\in\Lambda_1$ 
we have $\tilde g(x,y)=g(x,y)$ for every $y\in\Lambda_2$. 
\end{proof}
Recall that $\bL_p(L_p)$, $\bL_p(L_p(\ell_2))$ and $\bL_p(L_p(\cL_{p,2}))$ denote 
the spaces of predictable functions defined on $\Omega\times[0,T]$ and taking values in 
$L_p=L_p(\bR^d,\bR^M))$, $L_p(\ell_2)=L_p(\bR^d,\ell_2)$ and in 
$L_p(\cL_{p,2})=L_p(\bR^d,\cL_{p,2})$, 
respectively. For separable Banach spaces $B$ and numbers $p,q\in[1,\infty)$ 
the notations 
$$
L_p(\Omega\times[0,T]\times\bR^d, V)\quad\text{and}\quad  
L_{p,q}(\Omega\times[0,T]\times\bR^d\times Z, V) 
$$
mean the space of $\cP\otimes\cB(\bR^d)$-measurable functions 
$f:\Omega\times[0,T]\times\bR^d\to V$ and 
the space of $\cP\otimes\cB(\bR^d)\otimes\cZ$-measurable 
functions $g:\Omega\times[0,T]\times \bR^d\times Z\to V$, respectively, 
such that 
$$
E\int_0^T\int_{\bR^d}|f_t(x)|^p_V\,dx\,dt<\infty\quad 
E\int_0^T\int_{\bR^d}\left(\int_Z|g_t(x,z)|^q_V\,\mu(dz)\right)^{p/q}\,dx\,dt<\infty. 
$$
\begin{corollary}                                                    \label{lemma identification}
 The following identifications hold in the sense of Lemma \ref{lemma L}: 
$$
\bL_p(L_p)=L_p(\Omega\times[0,T]\times\bR^d,\bR^M),\quad
\bL_p(L_p(\ell_2))=L_p(\Omega\times[0,T]\times\bR^d,\ell_2(\bR^M))
$$
$$
\bL_p(L_p(\cL_{p,2}))=L_p(\Omega\times[0,T]\times\bR^d, \cL_{p,2})
$$
$$
=L_p(\Omega\times[0,T]\times\bR^d, \cL_{p})\cap
L_p(\Omega\times[0,T]\times\bR^d, \cL_{2})
$$ 
$$
=L_{p,p}(\Omega\times[0,T]\times\bR^d\times Z, \bR^M)\cap
L_{p,2}(\Omega\times[0,T]\times\bR^d\times Z, \bR^M).  
$$
\end{corollary}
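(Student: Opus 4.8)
The plan is to read each displayed identification as a \emph{specialization of Lemma \ref{lemma L}} to a suitable pair of measure spaces and a suitable target Banach space, and to dispatch the single intersection appearing in the third line by an elementary norm comparison. No new analysis is required beyond checking the separability hypotheses of the lemma and matching the definitions of the mixed-norm spaces with the iterated ones.

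For the two single-index equalities
$$
\bL_p(L_p)=L_p(\Omega\times[0,T]\times\bR^d,\bR^M),\qquad
\bL_p(L_p(\ell_2))=L_p(\Omega\times[0,T]\times\bR^d,\ell_2(\bR^M))
$$
I would apply Lemma \ref{lemma L} with $(\Lambda_1,\cS_1,\mu_1)=(\Omega\times[0,T],\cP,P\otimes dt)$, $(\Lambda_2,\cS_2,\mu_2)=(\bR^d,\cB(\bR^d),dx)$, exponents $p_1=p_2=p$, and $V=\bR^M$ (respectively $V=\ell_2(\bR^M)$). Since $p_1=p_2$, the mixed norm of $L_{p_1,p_2}$ collapses by Fubini to the single integral $E\int_0^T\int_{\bR^d}|\cdot|_V^p\,dx\,dt$ defining $L_p(\Omega\times[0,T]\times\bR^d,V)$, while $L_{p_1}(L_{p_2}(V))$ is exactly $\bL_p(L_p(V))$, the choice $\cS_1=\cP$ turning the required $\cS_1$-measurability into predictability. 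As $\bR^d$ with Lebesgue measure and the spaces $\bR^M$, $\ell_2(\bR^M)$ are separable, the hypotheses of the lemma hold.

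For the third line I would proceed in three steps. First, apply Lemma \ref{lemma L} exactly as above but with $V=\cL_{p,2}$ to obtain $\bL_p(L_p(\cL_{p,2}))=L_p(\Omega\times[0,T]\times\bR^d,\cL_{p,2})$; here one must verify that $\cL_{p,2}$ is separable, which follows from $\cZ$ being countably generated and $\mu$ being $\sigma$-finite, so that $\cL_p$ and $\cL_2$ are separable and hence so is the subspace $\cL_{p,2}$ of $\cL_p\times\cL_2$. Second, from $|v|_{\cL_{p,2}}=\max(|v|_{\cL_p},|v|_{\cL_2})$ and the elementary bounds $\max(a,b)\le a+b\le 2\max(a,b)$ applied to $a=|v|_{\cL_p}^p$ and $b=|v|_{\cL_2}^p$, the $\cL_{p,2}$-integrability condition is equivalent to simultaneous $\cL_p$- and $\cL_2$-integrability; together with the continuous diagonal embedding $\cL_{p,2}\hookrightarrow\cL_p\times\cL_2$, which for separable targets makes joint measurability equivalent to measurability of each component, this yields
$$
L_p(\Omega\times[0,T]\times\bR^d,\cL_{p,2})
=L_p(\Omega\times[0,T]\times\bR^d,\cL_p)\cap L_p(\Omega\times[0,T]\times\bR^d,\cL_2).
$$
Third, I would invoke Lemma \ref{lemma L} once more, now with $(\Lambda_1,\cS_1,\mu_1)=(\Omega\times[0,T]\times\bR^d,\cP\otimes\cB(\bR^d),P\otimes dt\otimes dx)$, $(\Lambda_2,\cS_2,\mu_2)=(Z,\cZ,\mu)$, $V=\bR^M$, and exponents $(p_1,p_2)=(p,p)$, respectively $(p,2)$. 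Because $\cL_q=L_q(Z,\mu,\bR^M)$, the iterated space $L_{p_1}(L_{p_2}(V))$ is $L_p(\Omega\times[0,T]\times\bR^d,\cL_q)$ while $L_{p_1,p_2}$ is $L_{p,q}(\Omega\times[0,T]\times\bR^d\times Z,\bR^M)$, giving the final two equalities for $q=p$ and $q=2$; separability of $(Z,\cZ,\mu)$ again follows from $\cZ$ countably generated and $\mu$ $\sigma$-finite.

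The only point that is not pure bookkeeping, and hence the main obstacle, is the verification of the separability hypotheses of Lemma \ref{lemma L} — in particular the separability of the intersection space $\cL_{p,2}$ and the compatibility of its Borel structure with that of $\cL_p\times\cL_2$. Once these are in hand, each identification is a direct instance of the lemma with the stated choice of parameters.
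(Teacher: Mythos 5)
Your proposal is correct and follows essentially the same route as the paper: the paper obtains the intersection identity directly from the definition of intersection spaces together with the equivalence of the norms, and derives all remaining equalities by repeated applications of Lemma \ref{lemma L}, exactly as you do. Your write-up simply makes explicit the choices of measure spaces, exponents and target space $V$ in each application of the lemma, and the separability checks, which the paper leaves implicit.
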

\begin{proof}
By definition of intersection spaces 
$$
L_p(\Omega\times[0,T]\times\bR^d, \cL_{p,2})
=L_p(\Omega\times[0,T]\times\bR^d, \cL_{p})\cap
L_p(\Omega\times[0,T]\times\bR^d, \cL_{2})
$$ 
as vector spaces, and it is easy to see that their norms are equivalent. 
The other equalities can be obtained by repeated applications of Lemma \ref{lemma L}. 
\end{proof}

We conclude this section with a simple lemma, which 
plays a useful role in situations when we want 
to use Lebesgue's theorem on dominated convergence to 
pass to the limit in some expressions in the proof of the main 
theorems. 
\begin{lemma}                                                                           \label{lemma tool}
Let $(V, |\cdot|_V)$ be a real Banach space whose elements are 
real-valued functions on a set $\Lambda$ such 
that when $f\in V$ then $|f|$, the absolute value 
of $f$, belongs to $V$ as well, and the norms of $f$ and $|f|$ are the same. 
Assume that the pointwise limit of every increasing sequence 
of non-negative functions $f_n\in V$ belongs to $V$ if $\sup_n|f_n|_V<\infty$. 
Then for every convergent sequence $(g_n)_{n=1}^{\infty}$ in $(V,|\cdot|_V)$ 
there is a subsequence $(g_{n(k)})_{k=1}^{\infty}$ and an element 
$G$ from $V$ such that $|g_{n(k)}|\leq G$ for each $k$. 
\end{lemma}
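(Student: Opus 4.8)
The plan is to mimic the classical argument showing that an $L_p$-convergent sequence has a subsequence dominated by an $L_p$ function, reading off at each step exactly which structural hypothesis on $V$ is used. Since $(g_n)_{n=1}^\infty$ converges in $(V,|\cdot|_V)$ it is Cauchy, so I would first extract a rapidly converging subsequence $(g_{n(k)})_{k=1}^\infty$ with $|g_{n(k+1)}-g_{n(k)}|_V\le 2^{-k}$ for every $k\ge1$. This is the only place where convergence (as opposed to mere boundedness) of the original sequence is used, and it is what makes the telescoping differences summable.

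Next I would build the dominating function explicitly as
$$
G=|g_{n(1)}|+\sum_{k=1}^\infty|g_{n(k+1)}-g_{n(k)}|
$$
and verify $G\in V$ by approximating with partial sums. Setting $G_N=|g_{n(1)}|+\sum_{k=1}^N|g_{n(k+1)}-g_{n(k)}|$, the hypothesis that $V$ is a vector space on which $f\mapsto|f|$ maps into $V$ without changing the norm shows that each $G_N$ belongs to $V$; moreover $(G_N)$ is non-negative and increasing pointwise, and by the triangle inequality
$$
|G_N|_V\le|g_{n(1)}|_V+\sum_{k=1}^N|g_{n(k+1)}-g_{n(k)}|_V\le|g_{n(1)}|_V+1,
$$
so that $\sup_N|G_N|_V<\infty$. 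The standing assumption on $V$ — that the pointwise limit of a norm-bounded increasing sequence of non-negative elements again belongs to $V$ — then yields $G=\lim_N G_N\in V$.

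Finally the domination is a one-line telescoping estimate: writing $g_{n(k)}=g_{n(1)}+\sum_{j=1}^{k-1}(g_{n(j+1)}-g_{n(j)})$ and applying the pointwise triangle inequality gives
$$
|g_{n(k)}|\le|g_{n(1)}|+\sum_{j=1}^{k-1}|g_{n(j+1)}-g_{n(j)}|=G_{k-1}\le G
$$
for every $k$, which is the required conclusion.

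The only genuinely non-routine point is the membership $G\in V$: the partial sums $G_N$ are honest elements of $V$, but their pointwise supremum need not a priori be, and this is exactly what the monotone-limit hypothesis on $V$ is imposed to supply. Once that hypothesis is invoked, the remaining steps are elementary manipulations with the triangle inequality, so I expect no further obstacle.
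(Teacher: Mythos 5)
Your argument is correct and is essentially the paper's own proof: the same rapidly convergent subsequence with $|g_{n(k+1)}-g_{n(k)}|_V\le 2^{-k}$, the same dominating function $G=|g_{n(1)}|+\sum_k|g_{n(k+1)}-g_{n(k)}|$, and the same telescoping bound. You have merely spelled out the verification that $G\in V$ via the increasing partial sums and the monotone-limit hypothesis, which the paper leaves implicit.
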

\begin{proof}
If $(g_n)_{n=1}^{\infty}$ is a Cauchy sequence in $(V,|\cdot|_V)$ 
then there  is a strictly increasing sequence of positive integers 
$(n(k))_{k=1}^{\infty}$ such that $|g_{n(k+1)}-g_{n(k)}|_V\leq 2^{-k}$ for each $k\geq1$. 
Thus 
$$
G:=|g_{n(1)}|+\sum_{k=1}^{\infty}|g_{n(k+1)}-g_{n(k)}|\in V
\quad \text{and\,\, $|g_{n(k)}|\leq G$ for every $k\geq1$}. 
$$
\end{proof}

\mysection{Proof of the main results}                             \label{section proof}
We use ideas and methods from 
\cite{K2010}. To prove the existence of the process $\bar u$ with the 
stated properties in Theorem \ref{theorem1}, first we show  
that when $\varphi$ runs through $C^{\infty}_0$, then the integral processes of $(f,\varphi)$, $(g,\varphi)$ 
and $(h,\varphi)$ in equation \eqref{eq1} define appropriate $L_p$-valued 
integral processes of $f$, $g$ and $h$, respectively. 
To this end we introduce a class of functions $\cU_p$,   
the counterpart of the class  $\cU_p$  introduced in \cite{K2010}.  

Let $\mathcal{U}_p$ denote the set of $\bR^M$-valued functions 
$u=u_t(x)=u_t(\omega,x)$ on $\Omega\times [0,T]\times \mathbb{R}^d$ such that
\begin{enumerate}[(i)]
\item $u$ is $\mathcal{F}\otimes\mathcal{B}([0,T])\otimes\mathcal{B}(\mathbb{R}^d)$-measurable,
\item for each $x\in\mathbb{R}^d$, $u_t(x)$ is $\mathcal{F}_t$-adapted,
\item $u_t(x)$ is cadlag in $t\in[0,T]$ for each $(\omega,x)$,
\item $u_t(\omega,\cdot)$ as a function of 
$(\omega,t)$ is $L_p$-valued, $\mathcal{F}_t$-adapted and cadlag in $t$ 
for every $\omega\in \Omega$.
\end{enumerate}
The following two lemmas are obvious corollaries 
of Lemmas 4.3 and 4.4 in \cite{K2010}. 

\begin{lemma}                                         \label{lemma f}
Let $f$ be an $\bR^M$-valued function from $\bL_p$. 
Then there exists a function $m\in \cU_p$ 
such that for each $\varphi\in C_0^\infty$ almost surely
$$
(m_t,\varphi)=\int_0^t(f_s,\varphi)\,ds
$$
holds for all $t\in[0,T]$. Furthermore, we have
$$
E\int_{\bR^d}\sup_{t\leq T}|m_t(x)|^p\,dx\leq NT^{p-1}E\int_0^T|f_s|^p_{L_p}\,ds,
$$
with a constant $N=N(p,M)$. 
\end{lemma}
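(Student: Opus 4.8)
The plan is to take $m$ to be the time-integral of $f$, i.e.\ $m_t(x)=\int_0^t f_s(x)\,ds$, and to verify directly that it lies in $\cU_p$ and satisfies the identity and the estimate. First I would fix a jointly measurable representative of $f$: by Corollary \ref{lemma identification} the element $f\in\bL_p=\bL_p(L_p)$ may be realised as a $\cP\otimes\cB(\bR^d)$-measurable function $f_t(\omega,x)$ with $E\int_0^T\int_{\bR^d}|f_t(x)|^p\,dx\,dt<\infty$. Discarding the $P$-null set $\{\omega:\int_0^T|f_s|_{L_p}^p\,ds=\infty\}$, on which I set $m\equiv0$, I may assume $\int_0^T|f_s|_{L_p}^p\,ds<\infty$ for every $\omega$; by H\"older's inequality in time this also gives $\int_0^T|f_s|_{L_p}\,ds<\infty$, and by Fubini's theorem $\int_0^T|f_s(x)|\,ds<\infty$ for $dx$-a.e.\ $x$. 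On the jointly measurable set $A=\{(\omega,x):\int_0^T|f_s(x)|\,ds<\infty\}$ I define $m_t(x)=\int_0^t f_s(x)\,ds$, and set $m_t(x)=0$ off $A$.

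Membership in $\cU_p$ I would then check property by property. Joint measurability (property (i)) and measurability of $A$ follow from the joint measurability of $f$ together with Fubini's theorem for the Lebesgue integral in $s$. For fixed $x$ the process $(\omega,s)\mapsto f_s(\omega,x)$ is predictable, being a section of a $\cP\otimes\cB(\bR^d)$-measurable function, so $t\mapsto m_t(x)$ is $\cF_t$-adapted and, as an indefinite Lebesgue integral, continuous in $t$; this gives (ii) and (iii) (the value $0$ off $A$ is trivially cadlag). For (iv) I would use that $t\mapsto\int_0^t f_s\,ds$ is a continuous, $\cF_t$-adapted $L_p$-valued process whenever $\int_0^T|f_s|_{L_p}\,ds<\infty$, and that this Bochner integral agrees, as an element of $L_p$, with the pointwise construction for $P\otimes dt$-a.e.\ $(\omega,t)$.

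The identity and the estimate are short. Fixing $\varphi\in C_0^\infty$, the integral $\int_0^t\int_{\bR^d}|f_s(x)\varphi(x)|\,dx\,ds$ is finite since $\varphi$ is bounded with compact support and $f\in L_p$, so the classical Fubini theorem yields, almost surely for all $t\in[0,T]$ (both sides being continuous in $t$),
$$
(m_t,\varphi)=\int_{\bR^d}\Big(\int_0^t f_s(x)\,ds\Big)\varphi(x)\,dx=\int_0^t(f_s,\varphi)\,ds.
$$
For the maximal bound, the triangle inequality for the (vector-valued) Lebesgue integral gives $\sup_{t\le T}|m_t(x)|\le\int_0^T|f_s(x)|\,ds$, and Jensen's inequality in $s$ gives $\big(\int_0^T|f_s(x)|\,ds\big)^p\le T^{p-1}\int_0^T|f_s(x)|^p\,ds$; integrating in $x$ and taking expectations yields the claimed estimate (indeed with $N=1$).

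The step I expect to require the most care is not any single estimate but the reconciliation of the two ``adapted/cadlag'' requirements: properties (ii)--(iii) are stated pointwise in $x$ for every $(\omega,x)$, while (iv) is an $L_p$-valued statement for every $\omega$. Producing one jointly measurable $m$ that simultaneously satisfies both, with the pointwise-in-$x$ integral coinciding with the $L_p$-Bochner integral and with the exceptional null sets handled so that the conclusions hold in the precise quantifier form demanded by $\cU_p$, is where the bookkeeping must be done carefully; everything else reduces to Fubini's theorem and Jensen's inequality.
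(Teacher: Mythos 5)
The paper gives no proof of this lemma at all; it is dismissed as an ``obvious corollary of Lemmas 4.3 and 4.4 in \cite{K2010}'', so your direct argument is not competing with anything in the text --- it is essentially a self-contained reconstruction of the cited Krylov lemma, and it is correct in substance: the definition $m_t(x)=\int_0^t f_s(x)\,ds$, the Fubini identification of $(m_t,\varphi)$ with $\int_0^t(f_s,\varphi)\,ds$, and the bound $\sup_{t\le T}|m_t(x)|^p\le T^{p-1}\int_0^T|f_s(x)|^p\,ds$ followed by integration in $x$ and $\omega$ (which indeed gives the estimate with $N=1$) are all right. Two small points. First, the passage from $\int_0^T|f_s|_{L_p}\,ds<\infty$ to $\int_0^T|f_s(x)|\,ds<\infty$ for $dx$-a.e.\ $x$ is not literally Fubini applied to that quantity; the clean route is Tonelli applied to $\int_0^T|f_s(x)|^p\,ds$, whose $dx$-integral equals $\int_0^T|f_s|_{L_p}^p\,ds<\infty$, followed by H\"older in $s$ (or Minkowski's integral inequality). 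Second --- and this is exactly the point you flag at the end --- the definition $m_t(x)={\bf1}_A(\omega,x)\int_0^t f_s(x)\,ds$ with $A=\{(\omega,x):\int_0^T|f_s(x)|\,ds<\infty\}$ does not obviously satisfy property (ii) of $\cU_p$: for a fixed exceptional $x$ the section $A_x$ is only $\cF_T$-measurable, so multiplying by ${\bf1}_{A_x}$ can destroy $\cF_t$-adaptedness for $t<T$. The repair is precisely the device the paper uses in its proof of Lemma \ref{lemma h}: set $\Gamma=\{x:P(A_x)=1\}$, which is Borel and of full Lebesgue measure by Fubini, and replace $A$ by $A\cap(\Omega\times\Gamma)$; for $x\in\Gamma$ the indicator is almost surely $1$ and hence $\cF_0$-measurable by completeness of $\cF_0$, while for $x\notin\Gamma$ one has $m_\cdot(x)\equiv0$. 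With that adjustment your proof is complete, and it buys the reader a verifiable argument in place of an external citation.
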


\begin{lemma}                                                                   \label{lemma g}
Let $g$ be an $\ell_2$-valued function from $\bL_p$. 
Then there exists a function $a\in\cU_p$ 
such that for each $\varphi\in C_0^\infty$ almost surely 
$$
(a_t,\varphi)=\sum_{r=1}^\infty \int_0^t(g_s^r,\varphi)\,dw_s^r
$$
holds for all $t\in[0,T]$. Furthermore, we have
$$
E\int_{\bR^d}\sup_{t\leq T}|a_t(x)|^p\,dx\leq NT^{(p-2)/2}E\int_0^T|g_s|^p_{L_p}\,ds,
$$
with a constant  $N=N(p,M)$.
\end{lemma}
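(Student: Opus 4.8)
The plan is to build $a$ as an $L_p$-valued stochastic integral of $g$ and to read off both the tested identity and the maximal bound from a single fibrewise Burkholder--Davis--Gundy estimate combined with a stochastic Fubini theorem. First I would note that since $g\in\bL_p=\bL_p(L_p(\ell_2))$ we have $E\int_0^T\int_{\bR^d}|g_s(x)|_{\ell_2}^p\,dx\,ds<\infty$, so for almost every $x\in\bR^d$ the $\bR^M$-valued process $M^x_t:=\sum_r\int_0^t g^r_s(x)\,dw^r_s$ is a well-defined continuous $\cF_t$-martingale. Indeed $p\geq2$ and Hölder's inequality in time give $\int_0^T|g_s(x)|_{\ell_2}^2\,ds\leq T^{(p-2)/p}\big(\int_0^T|g_s(x)|_{\ell_2}^p\,ds\big)^{2/p}<\infty$ almost surely for such $x$, which secures integrability of the integrand.

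The key estimate is obtained fibrewise. For almost every $x$, the Burkholder--Davis--Gundy inequality applied to $M^x$ yields $E\sup_{t\leq T}|M^x_t|^p\leq N(p,M)\,E\big(\int_0^T|g_s(x)|_{\ell_2}^2\,ds\big)^{p/2}$, and the same Hölder inequality as above, raised to the power $p/2$, gives $\big(\int_0^T|g_s(x)|_{\ell_2}^2\,ds\big)^{p/2}\leq T^{(p-2)/2}\int_0^T|g_s(x)|_{\ell_2}^p\,ds$. Integrating in $x$ and invoking Tonelli's theorem then produces exactly the claimed bound with the power $T^{(p-2)/2}$; in particular $\sup_{t\leq T}|M^x_t|^p$ is integrable over $\Omega\times\bR^d$, which is the $L_p$-valued content of the assertion.

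The remaining, and genuinely technical, point is to replace the family $(M^x)_x$, each defined only up to an $x$-dependent null set, by a single function $a_t(x)$ belonging to the class $\cU_p$. Here I would approximate $g$ in $\bL_p(L_p(\ell_2))$ by simple processes of the form $\sum_j\mathbf{1}_{(\sigma_j,\tau_j]}(s)\,\xi_j\,\phi_j(x)$ with $\xi_j$ bounded and $\cF_{\sigma_j}$-measurable and $\phi_j\in C_0^\infty$, whose stochastic integrals $a^n_t(x)$ are manifestly $\cF\otimes\cB([0,T])\otimes\cB(\bR^d)$-measurable, continuous in $t$, and $\cF_t$-adapted for each $x$. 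Applying the maximal estimate just derived to $g^n-g^m$ shows that $(a^n)$ is Cauchy in the complete space with norm $\big(E\int_{\bR^d}\sup_{t\leq T}|\cdot|^p\,dx\big)^{1/p}$; passing to an almost everywhere convergent subsequence produces a limit $a$ that inherits joint measurability, continuity in $t$, adaptedness for each $x$, and the $L_p$-valued cadlag property, so $a\in\cU_p$, while for almost every $x$ one checks $a_t(x)=M^x_t$ for all $t$ almost surely.

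Finally, the tested identity $(a_t,\varphi)=\sum_r\int_0^t(g^r_s,\varphi)\,dw^r_s$ holds trivially for the simple approximants and is preserved in the limit, or alternatively follows directly from the stochastic Fubini theorem for Wiener integrals of \cite{K2011} applied with the finite signed measure $\varphi(x)\,dx$ on $\bR^d$. I expect the main obstacle to be precisely the construction of the jointly measurable version $a$ and the verification that it realises the fibrewise integrals $M^x$ simultaneously for almost every $x$ (the ``regular version'' issue that the approximation plus completeness argument is designed to resolve); the analytic estimate itself is a routine consequence of Burkholder--Davis--Gundy and Hölder.
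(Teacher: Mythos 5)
Your proof is correct and follows essentially the route the paper relies on: the paper itself does not prove this lemma but refers to Lemmas 4.3--4.4 of \cite{K2010}, and the argument there (as well as the paper's own detailed proof of the jump analogue, Lemma \ref{lemma h}) is exactly your scheme of a fibrewise Burkholder--Davis--Gundy plus H\"older-in-time estimate, integration in $x$, and approximation by simple processes to produce the jointly measurable element of $\cU_p$ and to pass the tested identity to the limit. The only point to keep in mind when writing it out in full is the measurable selection of the exceptional sets (so that the modified limit remains adapted for every $x$ and cadlag for every $\omega$, using completeness of $\cF_0$), which you correctly identify as the technical core and which is handled in the paper's proof of Lemma \ref{lemma h}.
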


\begin{lemma}                                                                        \label{lemma h}
Let $h\in\mathbb{L}_{p,2}$ for $p\geq2$. 
Then there exists a function $b\in\mathcal{U}_p$ 
such that for each real-valued $\varphi\in L_q(\bR^d)$ 
with $q=p/(p-1)$, almost surely 
\begin{equation}                                                                   \label{weak3}
(b_t,\varphi)=\int_0^t\int_Z(h_s,\varphi)\,\tilde{\pi}(dz,ds)
\end{equation}
for all $t\in[0,T]$,   and 
\begin{equation}                                                                   \label{weaksup}
E\sup_{t\leq T}|(b_t,\varphi)|
\leq 3T^{(p-2)/(2p)}|\varphi|_{L_q}\left(E\int_0^T|h_t|_{L_p(\cL_2)}^p\,dt\right)^{1/p}. 
\end{equation}
Furthermore
\begin{equation}                                                                       \label{Lp}
E\int_{\bR^d}\sup_{t\leq T}|b_t(x)|^p\,dx
\leq NE\int_0^T|h_t|^p_{L_p(\cL_p)}\,dt+NT^{(p-2)/2}E\int_0^T|h_t|^p_{L_p(\cL_2)}\,dt
\leq N'|h|^p_{\bL_{p,2}}
\end{equation}
with constants $N=N(p,M)$ and $N'=N'(p,M,T)$.
\end{lemma}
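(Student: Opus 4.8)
The plan is to construct $b$ as the pointwise-in-$x$ stochastic integral $b_t(x)=\int_0^t\int_Z h_s(x,z)\,\tilde\pi(dz,ds)$, rendered jointly measurable and cadlag by the Poisson Fubini theorem, Theorem \ref{theorem Fubini1}, applied with $(\Lambda,\cS,m)=(\bR^d,\cB(\bR^d),dx)$ and $f(\omega,t,z,x)=h_t(\omega,x,z)$ (read componentwise in $i=1,\dots,M$, since Theorem \ref{theorem Fubini1} is stated for real integrands). The difficulty is that Theorem \ref{theorem Fubini1} demands the integrability \eqref{condition1} for \emph{every} $(\omega,x)$, whereas $h\in\bL_{p,2}$ only gives $\int_0^T|h_t(x,\cdot)|^2_{\cL_2}\,dt<\infty$ for $P\otimes dx$-almost every $(\omega,x)$. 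I would therefore first prove the lemma for truncated integrands $h^{(n)}_t(x,z)=(-n\vee h_t(x,z)\wedge n)\,{\bf1}_{|x|\leq n}{\bf1}_{Z_n}(z)$, where $Z_n\uparrow Z$ with $\mu(Z_n)<\infty$; for these \eqref{condition1} (and \eqref{Fubini cond}) hold identically, so Theorem \ref{theorem Fubini1} directly yields a jointly measurable field $b^{(n)}(t,x)$, cadlag and a martingale in $t$ for each $x$, with $(b^{(n)}_t,\varphi)=\int_0^t\int_Z(h^{(n)}_s,\varphi)\,\tilde\pi(dz,ds)$, and the general case is recovered by a limiting argument once the two a priori estimates are in hand.

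Next I would establish the estimates. For \eqref{weaksup}, $(b_t,\varphi)$ is for fixed $\varphi$ a purely discontinuous $\bR^M$-valued martingale, so the Davis inequality used in the proof of Theorem \ref{theorem Fubini1} (constant $3$) gives $E\sup_{t\leq T}|(b_t,\varphi)|\leq 3\,E\big(\int_0^T|(h_s,\varphi)|^2_{\cL_2}\,ds\big)^{1/2}$. Viewing $(h_s,\varphi)=\int_{\bR^d}\varphi(x)h_s(x,\cdot)\,dx$ as an $\cL_2$-valued Bochner integral, Minkowski's integral inequality and then Hölder's inequality in $x$ yield $|(h_s,\varphi)|_{\cL_2}\leq|\varphi|_{L_q}|h_s|_{L_p(\cL_2)}$; Hölder in $s$ produces the factor $T^{(p-2)/(2p)}$ and Jensen's inequality moves $E$ inside the $1/p$-power, giving \eqref{weaksup}. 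For \eqref{Lp} I would fix $x$ and invoke the Burkholder--Davis--Gundy (Kunita) inequality for $p$-th moments of compensated Poisson integrals, $E\sup_{t\leq T}|b_t(x)|^p\leq N\,E\big(\int_0^T|h_s(x,\cdot)|^2_{\cL_2}\,ds\big)^{p/2}+N\,E\int_0^T|h_s(x,\cdot)|^p_{\cL_p}\,ds$, and integrate in $x$ by Tonelli. The $\cL_p$-term becomes $N\,E\int_0^T|h_s|^p_{L_p(\cL_p)}\,ds$, while Hölder in $s$ turns the $\cL_2$-term into $N\,T^{(p-2)/2}E\int_0^T|h_s|^p_{L_p(\cL_2)}\,ds$, which is exactly \eqref{Lp}; the final bound $N'|h|^p_{\bL_{p,2}}$ follows from $|\cdot|_{\cL_p},|\cdot|_{\cL_2}\leq|\cdot|_{\cL_{p,2}}$.

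To finish I would verify $b\in\cU_p$ and pass to the limit. Theorem \ref{theorem Fubini1} already supplies joint measurability, adaptedness in $t$ for each $x$, and cadlag $t$-paths for each $(\omega,x)$, i.e.\ conditions (i)--(iii); for (iv), \eqref{Lp} makes $x\mapsto\sup_{t\leq T}|b_t(x)|$ lie in $L_p$ almost surely, so dominated convergence with this $L_p$-dominating function promotes the pointwise right-continuity and existence of left limits of $b_t(x)$ to right-continuity and left limits in the $L_p(\bR^d)$-norm. The identity \eqref{weak3} for $\varphi\in L_q$ I would obtain first for the approximants from the Fubini identity \eqref{Fubini eq} applied to $\varphi(x)h^{(n)}_t(x,z)$, and then for $h$ by a limit: applying \eqref{Lp} and \eqref{weaksup} to the differences $h^{(n)}-h^{(m)}$ (which tend to $0$ in $\bL_{p,2}$) shows $(b^{(n)})$ is Cauchy both in the norm $(E\int_{\bR^d}\sup_t|\cdot|^p\,dx)^{1/p}$ and, tested against $\varphi$, in $E\sup_t|\cdot|$, so the limit $b$ inherits membership in $\cU_p$, the estimates, and, since the right-hand side of \eqref{weak3} for $h^{(n)}$ converges to that for $h$ by the Davis inequality, the weak identity itself.

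The main obstacle is precisely the mismatch between the everywhere-integrability required by Theorem \ref{theorem Fubini1} and the merely almost-everywhere integrability furnished by $h\in\bL_{p,2}$, which forces the truncation-and-limit scheme and makes it necessary to prove the a priori estimates first for the approximants. A secondary technical point is securing the correct two-term Poisson BDG inequality that produces exactly the $L_p(\cL_p)$ and $L_p(\cL_2)$ contributions in \eqref{Lp}, and confirming that the pointwise-in-$x$ cadlag property genuinely promotes to the cadlag property in the $L_p$-norm.
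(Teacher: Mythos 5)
Your proposal is correct in substance and its two a priori estimates are derived exactly as in the paper (Davis--Minkowski--H\"older for \eqref{weaksup}; the two-term Burkholder--Davis--Gundy inequality for compensated Poisson integrals pointwise in $x$, followed by Tonelli and H\"older in $s$, for \eqref{Lp}), but the construction of the regular version $b$ for the approximants is genuinely different. The paper does not invoke Theorem \ref{theorem Fubini1} here at all: it takes as approximating class $\cH$ the tensor-product simple functions $h_t(x,z)=\sum_i\varphi_i(x)c_i{\bf1}_{(s_i,t_i]}(t){\bf1}_{U_i}(z)$ with $\varphi_i\in C_0^\infty$, for which $b_t(x)=\sum_i\varphi_i(x)c_i(\tilde\pi_{t_i\wedge t}(U_i)-\tilde\pi_{s_i\wedge t}(U_i))$ is written down explicitly and membership in $\cU_p$ is immediate; density of $\cH$ in $\bL_{p,2}$ then drives the same Cauchy-sequence limit passage you describe. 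You instead truncate ($|h|\le n$, $|x|\le n$, $z\in Z_n$) so that the everywhere-in-$(\omega,x)$ integrability \eqref{condition1} holds, and let Theorem \ref{theorem Fubini1} manufacture the jointly measurable cadlag field $b^{(n)}$ and (applied to $\varphi h^{(n)}$) the weak identity. Your route buys a more systematic construction that reuses machinery already proved and avoids checking density of a special class; the paper's route buys complete explicitness for the approximants and avoids the one point you should make explicit, namely that an element of $\bL_{p,2}$ must first be identified with a $\cP\otimes\cB(\bR^d)\otimes\cZ$-measurable representative (Corollary \ref{lemma identification}) before pointwise truncation and Theorem \ref{theorem Fubini1} make sense. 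Two further places where you gloss but where the argument does go through: promoting ``indistinguishable for each fixed $x$'' to ``almost surely, for a.e.\ $x$, for all $t$'' when matching the Fubini regular version of $\varphi h^{(n)}$ with $\varphi(x)b^{(n)}_t(x)$, and the null-set bookkeeping (the paper's sets $\Theta$, $\Gamma$, $\bar\Theta$) needed to ensure the limit of the Cauchy sequence retains properties (i)--(iii) of $\cU_p$; your dominated-convergence argument for upgrading pointwise cadlag paths to $L_p$-valued cadlag paths via \eqref{Lp} is a valid alternative to the paper's uniform-convergence argument for property (iv).
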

\begin{proof}
Let $\cH$ denote the set of functions $h$ of the form 
$$
h_t(z,x)=\sum_{i=1}^k\varphi_i(x)c_i{\bf1}_{(s_i,t_i]}(t){\bf1}_{U_i}(z)
$$
for integers $k\geq1$, functions $\varphi_i\in C^{\infty}_0(\bR^d)$, 
time points $0\leq s_i\leq t_i$, 
$\cF_{s_i}$-measurable bounded random vectors $c_i$  
and sets $U_i\in \cZ$ such that $\mu(U_i)<\infty$. 
For this function $h$ define $b$ by 
$$
b_t(x)=\sum_i \varphi_i(x)c_i(\tilde \pi_{t_{i}\wedge t}(U_i)-\tilde \pi_{s_{i\wedge t}}(U_i)), 
\quad t\in[0,T], \,x\in\bR^d.  
$$
Clearly, $b\in \cU_p$,  for  every  $\varphi\in L_q(\bR^d,\bR)$
$$
(b_t,\varphi)=\sum_{i=1}^k(\varphi_i,\varphi)
c_i(\tilde \pi_{t_{i}\wedge t}(U_i)-\tilde \pi_{s_{i\wedge t}}(U_i))
=\int_0^t\int_{\cZ}(h_s(z),\varphi)\,\tilde\pi(dz,ds)  
$$  
almost surely for all $t\in[0,T]$, and for each $x\in\bR^d$ 
\begin{equation}                                          \label{x}
b_t(x)=\int_0^t\int_Zh_s(x,z)\,\tilde\pi(dz,ds)\quad\text{almost surely for all $t$.}
\end{equation}
  By the Davis, Minkowski and H\"older inequalities,
$$
E\sup_{t\leq T}|(b_t,\varphi)|
\leq 3E\left(\int_0^T\int_Z|(h_s(z),\varphi)|^2\,\mu(dz)\,ds\right)^{1/2}
\leq 3E\left(\int_0^T(|h_s|_{\cL_2},|\varphi|)^2\,ds\right)^{1/2}
$$
$$
\leq 3E\left(\int_0^T(|h_s|^2_{L_p(\cL_2)}|\varphi|_{L_q}^2\,ds\right)^{1/2}
\leq 3|\varphi|_{L_q}\left(\int_0^TE|h_s|^2_{L_p(\cL_2)}\,ds\right)^{1/2}
$$
$$
\leq 3|\varphi|_{L_q}\left(\int_0^T(E|h_s|^p_{L_p(\cL_2)})^{2/p}\,ds\right)^{1/2}
\leq 3|\varphi|_{L_q}T^{(p-2)/(2p)}\left(\int_0^TE|h_s|^p_{L_p(\cL_2)}\,ds\right)^{1/p}, 
$$
which proves \eqref{weaksup} when $h\in\cH$. 
From \eqref{x} by the Burkholder-Davis-Gundy inequality 
for Poisson  martingale measures, 
see, e.g. \cite{MP3}, for $p\geq2$ for each $x\in\bR^d$ we have 
$$
E\sup_{t\in[0,T]}|b_t(x)|^p
=E\sup_{t\leq T}\left(\int_0^t\int_Zh_s(x,z)\,\tilde\pi(dz,ds)\right)^p
$$
$$
\leq NE\int_0^T\int_Z|h_s(x,z)|^p\,\mu(dz)\,ds+
 NE\left(\int_0^T\int_Z|h_s(x,z)|^2\,\mu(dz)\,ds\right)^{p/2}  
$$
with $N=N(p,M)$.  Hence by Jensen's inequality and integrating 
over $\bR^d$ we get \eqref{Lp} for $h\in\cH$. 
It is not difficult to see that $\cH$ is dense in $\bL_{p,2}$. Thus for 
$h\in\bL_{p,2}$ there is a sequence $h^n\in\cH$ and $b^n\in\cU_p$, 
such that $h^n\to h$ in $h\in\bL_{p,2}$, and \eqref{weak3} and \eqref{Lp} 
hold with $b^n$ and $h^n$ in place of $b$ and $h$,  respectively. 
Therefore we can find a subsequence $h^{n(k)}$ and $b^{n(k)}$ such that 
$$
E\int_{\bR^d}\sup_{t\leq T}|b^{n(k+1)}_t(x)-b^{n(k)}_t(x)|^p\,dx
$$
$$
\leq 
N(E\int_0^T|h_t^{n(k+1)}-h_t^{n(k)}|_{L_p(\cL_2)}^p\,dt
+E\int_0^T|h_t^{n(k+1)}-h_t^{n(k)}|_{L_p(\cL_p)}^p\,dt)
\leq \frac{N}{2^{kp}}. 
$$
Hence there is a set $\Theta\in\cF\otimes\cB(\bR^d)$ of full measure 
such that for $k\to\infty$ the 
sequence $b_t^{n(k)}(x)$ converges for $(t,\omega,x)\in[0,T]\times\Theta$, 
uniformly in $t\in[0,T]$.  Define 
$$
\Gamma=\{x\in\bR^d: P((\omega,x)\in\Theta)=1\}\quad 
\text{and
\quad  
$\bar\Theta=\Theta\cap(\Omega\times \Gamma)$}. 
$$ 
By Fubini's theorem $\Gamma\in\cB(\bR^d)$, and it is of full measure. 
Hence $\bar\Theta\in\cF\otimes\cB(\bR^d)$, and it is of full measure. 
If $x\in\Gamma$ then $\bar\Theta_{x}:=\{\omega\in\Omega:(\omega,x)\in\bar\Theta\}
=\{\omega\in\Omega:(\omega,x)\in\Theta\}=:\Theta_{x}$, i.e., 
$P(\bar\Theta_{x})=P(\Theta_{x})=1$, which implies
$\bar\Theta_{x}\in\cF_0$, since $\cF_0$ is complete. 
If $x\notin\Gamma$ then $\bar\Theta_{x}=\emptyset\in\cF_0$. 
Thus $\bar b^{n(k)}:=b^{n(k)}{\bf1}_{\bar\Theta}$ is 
$\cF\otimes\cB([0,T])\otimes\cB(\bR^d)$-measurable 
and $\bar b^{n(k)}(t,x)$ is $\cF_t$-measurable for each 
$(t,x)\in[0,T]\times\bR^d$. Consequently, 
$b=\lim_{k\to\infty}\bar b^{n(k)}$ has these measurability properties as well. 
Since for every $(\omega,x)\in\Omega\times\bR^d$ the functions 
$\bar b^{n(k)}$ are cadlag and converge  to $b$, uniformly 
in $t\in[0,T]$, the limit $b$ is a cadlag function of $t\in[0,T]$ 
for every $(\omega,x)\in\Omega\times\bR^d$. 
Thus $b$ satisfies the conditions (i), (ii) and 
(iii) in the definitions of $\cU_p$. Letting $k\to\infty$ in 
$$
E\int_{\bR^d}\sup_{t\leq T}|\bar b^{n(k)}_t(x)|^p\,dx
\leq N(E\int_0^T|h_t^{n(k)}|^p_{L_p(\cL_p)}\,dt+T^{(p-2)/2}
E\int_0^T|h_t^{n(k)}|^p_{L_p(\cL_2)})\,dt 
$$
and 
$$
E\sup_{t\leq T}|(\bar b^{n(k)}_t,\varphi)|\leq 3T^{(p-2)/(2p)}|\varphi|_{L_q}
(E\int_0^T|h_t^{n(k)}|_{L_p(\cL_2)}^p\,dt)^{1/p} 
$$
by Fatou's lemma we get \eqref{Lp} and \eqref{weaksup}.   
Letting $k\to\infty$ in 
$$
E\sup_{t\leq T}|\bar b^{n(k)}_t-\bar b^{n(l)}_t|^p_{L_p}
\leq N E\int_0^T|h_t^{n(k)}-h_t^{n(l)}|^p_{L_p(\cL_p)}\,dt
+|h_t^{n(k)}-h_t^{n(l)}|^p_{L_p(\cL_2)}\,dt
$$
by Fatou's lemma we have 
$$
E\sup_{t\leq T}|b_t-\bar b_t^{n(l)}|^p_{L_p}
\leq N E \int_0^T |h_t-h_t^{n(l)}|_{L_p(\cL_p)}^p+
|h_t-h_t^{n(l)}|_{L_p(\cL_2)}^p\,dt,
$$
which converges to zero as $l\to\infty$. Thus there is 
$\Omega^{'}\subset\Omega$ of full 
probability such that $({\bf1}_{\Omega^{'}}b_t)_{t\in[0,T]}$ is an $L_p$-valued 
$\cF_t$-adapted cadlag process. For $\varphi\in L_q(\bR^d)$ using the Davis 
inequality, then Minkowski's and H\"older's inequalities we have 
$$
E\sup_{t\in[0,T]}|(\bar b^{n(k+1)}_t,\varphi)-(\bar b^{n(k)}_t,\varphi)|
$$
$$
\leq N'|\varphi|_{L_q}\left(E\int_0^T|h^{n(k+1)}-h^{n(k)}|^p_{L_p(\cL_2)}\,dt\right)^{1/p}
\leq N{''}|\varphi|_{L_q}2^{-k}
$$
with constants $N'=N'(p,T)$ and $N{''}=N{''}(p,T)$. Hence we can see that 
letting $k\to\infty$ in 
$$
(\bar b_t^{n(k)},\varphi)=\int_0^t\int_Z(h^{n(k)}_s(z),\varphi)\,\tilde\pi(dz,ds), 
$$
both sides converge almost surely, uniformly in $t\in[0,T]$,  
and for each $\varphi\in L_q$ we get that there is 
$\Omega_{\varphi}\subset\Omega$ 
of full probability 
such that for $\omega\in\Omega_{\varphi}$ 
$$
(b_t,\varphi)=\int_0^t\int_Z(h_s(z),\varphi)\,\tilde\pi(dz,ds)
$$
for all $t\in[0,T]$, which completes the proof of the lemma.  
\end{proof}

Following \cite{K2010} we obtain It\^o's formula \eqref{Ito1}  by mollifying 
$\bar u$ in $x\in\bR^d$ and applying   
It\^o's formula \eqref{Itop}. 
To this end we take a nonnegative kernel 
$k\in C_0^{\infty}$ with unit integral, and for $\varepsilon\in(0,1)$ 
and for locally integrable functions $v$ of $x\in\bR^d$ we use the notation  
$v^{(\varepsilon)}$ for the mollifications of $v$, 
\begin{equation}                                          \label{mollification def}
v^{(\varepsilon)}(x)=\int_{\bR^d}v(x-y)k_{\varepsilon}(y)\,dy, \quad x\in\bR^d, 
\end{equation}
where $k_{\varepsilon}(y)=\varepsilon^{-d}k(y/\varepsilon)$ for $y\in\bR^d$. 
Note that if $v=v(x)$ is a locally Bochner-integrable function on $\bR^d$,  
taking values in a Banach space, the mollification of $v$ is defined 
as \eqref{mollification def} in the sense of Bochner integral.

We will make use of well-known smoothness properties of mollifications 
and the following well-known lemma. 

\begin{lemma}                                                               \label{lemma epsilon}
Let $V$ be a separable Banach space, 
and let $f=f(x)$ be a $V$-valued function of $x\in\bR^d$ such that 
$f\in L_p(V)=L_p(\bR^d,V)$ for some $p\geq 1$. Then 
$$
|f^{(\varepsilon)}|_{L_p(V)}\leq |f|_{L_p(V)}\quad\text{for every $\varepsilon>0$}, 
\quad  
\text{and} 
\quad \lim_{\varepsilon\to 0}|f^{(\varepsilon)}-f|_{L_p(V)}=0.
$$
\end{lemma}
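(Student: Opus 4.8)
The plan is to establish the two assertions in order, deriving the contraction bound first and then using it to bootstrap the convergence from a dense subclass to all of $L_p(V)$. For the contraction inequality, I would use that $k_{\varepsilon}\geq0$ with $\int_{\bR^d}k_{\varepsilon}\,dy=1$, so that $k_{\varepsilon}(y)\,dy$ is a probability measure. For each fixed $x$, the triangle inequality for the Bochner integral (raised to the power $p$) followed by Jensen's inequality for the convex function $t\mapsto t^p$ then gives
$$
|f^{(\varepsilon)}(x)|_V^p\leq\Big(\int_{\bR^d}|f(x-y)|_Vk_{\varepsilon}(y)\,dy\Big)^p\leq\int_{\bR^d}|f(x-y)|_V^pk_{\varepsilon}(y)\,dy.
$$
Integrating in $x$ and applying Tonelli's theorem (the integrand is non-negative) together with the translation invariance of Lebesgue measure would then yield $|f^{(\varepsilon)}|_{L_p(V)}\leq|f|_{L_p(V)}$, which is the first claim.

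For the convergence claim, I would run a standard three-$\delta$ argument. First I would verify it on a dense subclass of well-behaved functions: for $g$ continuous with compact support, uniform continuity gives, with $R$ chosen so that $\mathrm{supp}\,k\subset\{|y|\leq R\}$,
$$
\sup_{x\in\bR^d}|g^{(\varepsilon)}(x)-g(x)|_V\leq\sup_{|y|\leq\varepsilon R}\ \sup_{x\in\bR^d}|g(x-y)-g(x)|_V\to0\quad\text{as }\varepsilon\to0;
$$
since the supports of $g$ and of all $g^{(\varepsilon)}$ with $\varepsilon\leq1$ lie in one fixed bounded set, this uniform convergence upgrades to convergence in $L_p(V)$. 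Then, given an arbitrary $f\in L_p(V)$ and $\delta>0$, I would pick such a $g$ with $|f-g|_{L_p(V)}<\delta$ and use the identity $(f-g)^{(\varepsilon)}=f^{(\varepsilon)}-g^{(\varepsilon)}$ together with the contraction bound already proved to estimate
$$
|f^{(\varepsilon)}-f|_{L_p(V)}\leq|(f-g)^{(\varepsilon)}|_{L_p(V)}+|g^{(\varepsilon)}-g|_{L_p(V)}+|g-f|_{L_p(V)}\leq2\delta+|g^{(\varepsilon)}-g|_{L_p(V)}.
$$
Letting $\varepsilon\to0$ and then $\delta\to0$ would finish the argument.

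The only point requiring genuine care, and the only place the separability hypothesis on $V$ is used, is the density of continuous compactly supported $V$-valued functions in $L_p(V)$: for separable $V$ a Bochner-measurable $f$ can be approximated in $L_p(V)$ by simple functions taking values in a countable dense subset of $V$, after which the indicator sets can be smoothed into continuous functions. I expect to invoke this as a standard fact from Bochner integration theory rather than reprove it; everything else in the argument is routine, and the case $p=1$ is covered since the Jensen step then holds with equality.
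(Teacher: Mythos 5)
Your proof is correct and follows essentially the same route as the paper: the contraction bound via Jensen's inequality and Fubini/Tonelli, then density plus a three-epsilon argument using that bound. The only (inessential) difference is the dense subclass: the paper uses finite sums $\sum_i v_i\varphi_i$ with $v_i$ in a countable dense subset of $V$ and scalar $\varphi_i\in C_c(\bR^d)$, reducing the convergence to the scalar case, whereas you work with general continuous compactly supported $V$-valued functions and get convergence directly from uniform continuity.
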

\begin{proof} By the properties of Bochner integrals, Jensen's inequality 
and Fubini's theorem 
$$
|f^{(\varepsilon)}|^p_{L_p(V)}
=\int_{\bR^d}\left|\int_{\bR^d}f(y)k_\varepsilon(x-y)dy\right|^p_V\,dx
$$
$$
\leq \int_{\bR^d}\int_{\bR^d}|f(y)|^p_V k_\varepsilon(x-y)\,dy\,dx=|f|^p_{L_p(V)}.
$$
Since $V$ is separable, it has a countable dense subset $V_0$. 
Denote by $\cH\subset L_p(V)$ 
the space of functions $h$ of the form 
$$
h(x)=\sum_{i=1}^kv_i\varphi_i(x)
$$ 
for some integer $k\geq1$, $v_i\in V_0$ 
and 
continuous real functions $\varphi_i$ on $\bR^d$ with compact support. 
Then for such an $h$ we have 
$$
|h^{(\varepsilon)}-h|_{L_p(V)}
\leq \sum_{i=1}^{k}
|\varphi_i^{(\varepsilon)}-\varphi_i|_{L_p}
|v_i|_V\to0
\quad\text{as $\varepsilon\to0$},  
$$
where $L_p=L_p(\bR^d,\bR)$. 
For $f\in L_p(V)$ and $h\in\cH$ we have 
$$
|f-f^{(\varepsilon)}|_{L_p(V)}\leq |f-h|_{L_p(V)}
+|h-h^{(\varepsilon)}|_{L_p(V)}+|(f-h)^{(\varepsilon)}|_{L_p(V)}
\leq 2|f-h|_{L_p(V)}+|h-h^{(\varepsilon)}|_{L_p(V)}.
$$
Letting here $\varepsilon\to0$ for each $f\in L_p(V)$ we obtain 
\begin{equation*}                                                                           \label{fh}
\limsup_{\varepsilon\to0}|f-f^{(\varepsilon)}|_{L_p(V)}
\leq 2|f-h|_{L_p(V)}\quad \text{for all $h\in\cH$}. 
\end{equation*}
Since $\cH$ is dense in  $L_p(V)$, 
we can choose $h\in\cH$ to make $|f-h|_{L_p(V)}$ arbitrarily small,   
which proves $\lim_{\varepsilon\to0}|f-f^{(\varepsilon)}|_{L_p(V)}=0$. 
\end{proof}
\begin{proof}[Proof of Theorem \ref{theorem1}]
By Lemmas \ref{lemma f}, \ref{lemma g} and \ref{lemma h} there exist $a=(a^i)$ 
and $b=(b^i)$ and $m=(m^i)$ in $\mathcal{U}_p$ 
such that for each $\varphi\in C_0^\infty$ almost surely
$$
(a_t^i,\varphi)=\int_0^t(f^i_s,\varphi)ds,
\quad 
(b_t^i,\varphi)=\int_0^t(g^{ir}_s,\varphi)dw_s^r
$$
and
$$
(m^i_t,\varphi)=\int_0^t\int_Z(h^i_s,\varphi)\,\tilde{\pi}(dz,ds)
$$
for all $t\in [0,T]$ and $i=1,...,M$. Thus $a+b+m$ is an 
$L_p$-valued adapted cadlag process such that 
for $\bar u_t:=\psi+a_t+b_t+m_t$ we have 
$(\bar u_t,\varphi)=
(u_t,\varphi)$ for each $\varphi\in C_0^{\infty}$ 
for $P\otimes dt$ almost every 
$(\omega,t)\in\Omega\times[0,T]$. 
Hence, by taking a countable set 
$\Phi\subset C_0^{\infty}$ such that $\Phi$ is dense in $L_q$, 
we get that  $\bar u=u$ 
for $P\otimes dt$ almost everywhere 
as $L_p$-valued functions. Moreover, for each 
$\varphi\in C_0^{\infty}$
\begin{equation}                                                          \label{equ1}
(\bar u^i_t,\varphi)
=(\psi,\varphi)
+\int_0^t(f^i_s,\varphi)\,ds
+\int_0^t(g_s^{ir},\varphi)\,dw_s^r
+\int_0^t\int_Z(h^i_s(z),\varphi)\,\tilde{\pi}(dz,ds)
\end{equation}
almost surely for all $t\in[0,T]$, $i=1,2,...,M$, 
since both sides we have cadlag 
processes. 
To ease notation we will denote $\bar u$ also by $u$ in the sequel. 
 
By the estimates of Lemmas 
\ref{lemma f}, \ref{lemma g} and \ref{lemma h},
$$
E\int_{\mathbb{R}^d}\sup_{t\leq T}|u_t(x)|^p\,dx
$$
\begin{equation}                                            \label{u sup estimate}
\leq N\left( E|\psi|^p_{L_p}+ |f|^p_{\bL_p}
+|g_s|^p_{\bL_p}
+|h|^p_{\bL_{p,2}} \right)<\infty, 
\end{equation}
where $N=N(p,M, T)$ is a constant.  
Substituting $k_{\varepsilon}(x-\cdot)=\varepsilon^{-d}k((x-\cdot)/\varepsilon)$ 
in place of $\varphi$  in equation \eqref{equ1}, 
for $\varepsilon>0$ and $x\in\bR^d$ 
we have 
$$
u_t^{(\varepsilon)i}(x)=\psi^{(\varepsilon)i}(x)
+\int_0^tf^{(\varepsilon)i}_s(x)\,ds
+\int_0^tg^{(\varepsilon)ir}_s(x)\,dw^r_s
+\int_0^t\int_Zh_s^{(\varepsilon)i}(x)\,\tilde{\pi}(dz,ds)
$$
almost surely for all $t\in[0,T]$ for $i=1,2,...,M$. 
By virtue of Lemma \ref{lemma epsilon} we have
\begin{equation}                                                                  \label{h epsilon conv}
\lim_{\varepsilon\to0}|h^{(\varepsilon)}-h|^p_{\bL_{p,2}}=0
\end{equation}
and     
\begin{equation}                                                                        \label{f g epsilon conv}
\lim_{\varepsilon\to0}(|
f^{(\varepsilon)}-f|_{\bL_p}+|g^{(\varepsilon)}-g|_{\bL_p})=0.  
\end{equation}
Then using \eqref{h epsilon conv}, \eqref{f g epsilon conv} 
and estimate \eqref{u sup estimate} 
with $u^{(\varepsilon)}-u$ in place of $u$, we have
\begin{equation}                                                                              \label{u epsilon conv}
\lim_{\varepsilon\to0}E\,\sup_{t\leq T}
|u_t^{(\varepsilon)}-u_t|^p_{L_p}=0.
\end{equation}
By Minkowski's and H\"older's inequalities, for $\varepsilon>0$ 
for each $x\in \mathbb{R}^d$, $s\in[0,T]$ and $\omega\in\Omega$
\begin{equation}                                                                                \label{he}
|h^{(\varepsilon)}_s(x)|_{\cL_2}
\leq \int_{\mathbb{R}^d}|h_s(y)|_{\cL_2}k_\varepsilon(x-y)\,dy
\leq N_{\varepsilon}|h_s|_{L_p(\cL_2)}   
\end{equation}
with 
$N_{\varepsilon}=|k_{\varepsilon}|_{L_{p/(p-1)}}<\infty$.   
Similarly,  for every $\varepsilon>0$
$$
|f^{(\varepsilon)}_s(x)|\leq  N_{\varepsilon}|f_s|_{L_p},
\quad 
|g_s^{(\varepsilon)}(x)|_{\ell_2}
\leq N_{\varepsilon}|g_s|_{L_p}.  
$$
Hence 
$$
\int_0^T|f^{(\varepsilon)}_s(x)|+|g_s^{(\varepsilon)}(x)|^2_{\ell_2}
+|h^{(\varepsilon)}_s(x)|^2_{\cL_2}<\infty\,\,\text{(a.s.)}. 
$$
Thus we can apply Theorem \ref{theorem0} on It\^o's formula  
to $|u_t^{(\varepsilon)}(x)|^p$ for each $x\in \bR^d$ 
to get 
$$
|u^{(\varepsilon)}_t(x)|^p
=  |\psi^{(\varepsilon)}(x)|^p
+\int_0^t
p|u^{(\varepsilon)}_{s-}(x)|^{p-2}u^{(\varepsilon)i}_{s-}(x)g^{(\varepsilon)ir}_s(x)\,dw^r_s 
$$
$$
+\int_0^t
p|u_{s-}^{(\varepsilon)}(x)|^{p-2}u_{s-}^{(\varepsilon)i}f_s^{(\varepsilon)i}(x) \,ds 
$$
$$             
+\tfrac{p}{2}\int_0^t\big((p-2)
|u_{s-}^{(\varepsilon)}(x)|^{p-4}|u_{s-}^{(\varepsilon)i}(x)g^{(\varepsilon)i\cdot}_s(x)|^2_{l_2}
+
|u_{s-}^{(\varepsilon)}(x)|^{p-2}|g_s^{(\varepsilon)}(x)|_{\ell_2}^2  \big)\,ds      
$$
\begin{equation}                                                                    \label{Itox}
+\int_0^t
\int_Z p|u_{s-}^{(\varepsilon)}(x)|^{p-2}u_{s-}^{(\varepsilon)i}(x)h_s^{(\varepsilon)i}(x)
\,\tilde{\pi}(dz,ds)
+\int_0^t\int_Z J^{h_s^{(\varepsilon)}(x,z)}|u_{s-}^{(\varepsilon)}(x)|^p \,\pi(dz,ds),        
\end{equation}
where the notation 
$$
J^a|v|^p=|v+a|^p-|v|^p-a^iD_{i}|v|^p=|v+a|^p-|v|^p-pa^i|v|^{p-2}v^i
$$
is used for vectors $a=(a^1,...,a^M)$ and $(v^1,...,v^M)\in\bR^M$. 
In order to integrate both sides of  \eqref{Itox} against $dx$ 
over $\bR^d$ and apply deterministic and stochastic 
Fubini theorems, we  are going to check 
that almost surely 
\begin{equation*}                                               \label{conditionA1}
A_1(x):=
\int_0^T\int_Z
|J^{h^{(\varepsilon)}_s}|u_{s-}^{(\varepsilon)}(x)|^p|\,\mu(dz)\,ds<\infty 
\quad\text{for all $x\in\bR^d$}, 
\end{equation*}
\begin{equation*}                                                   \label{conditionB1}
B_1:=\int_0^T\int_Z\int_{\mathbb{R}^d} 
|J^{h^{(\varepsilon)}_s}|u_{s-}^{(\varepsilon)}|^p|\,dx\,\mu(dz)\,ds<\infty, 
\end{equation*}
\begin{equation*}                                               \label{conditionA2}
A_2(x):=
 \int_0^T\int_Z 
|u_{s-}^{(\varepsilon)}(x)|^{2p-2}|h^{(\varepsilon)}_s(x)|^2\,\mu(dz)\,ds<\infty
\quad\text{for all $x\in\bR^d$}, 
\end{equation*}
\begin{equation*}                                               \label{conditionB2}
B_2:=\int_{\mathbb{R}^d}
\left( \int_0^T\int_Z 
|u_{s-}^{(\varepsilon)}|^{2p-2}|h^{(\varepsilon)}_s|^2\,\mu(dz)\,ds\right)^{1/2}\,dx<\infty, 
\end{equation*}    
\begin{equation*}                                               \label{conditionA3}
A_3(x):=\int_0^T
|u^{(\varepsilon)}_{s-}|^{2p-4}|u^{(\varepsilon)i}_{s-}(x)g^{(\varepsilon)i\cdot}_s(x)|^2_{l_2}\,ds
<\infty
\quad\text{for all $x\in\bR^d$}, 
\end{equation*}
\begin{equation*}                                               \label{conditionB3}
B_3:=\int_{\mathbb{R}^d}
\left(\int_0^T
|u^{(\varepsilon)}_{s-}|^{2p-4}
|u^{(\varepsilon)i}_{s-}g^{(\varepsilon)i\cdot}_s|^2_{l_2}\,ds
\right)^{1/2}\,dx<\infty  
\end{equation*}
and 
\begin{equation*}                                               \label{conditionC}
C:=\int_0^T\int_{\bR^d}
|u_s^{(\varepsilon)}(x)|^{p-1}|f_s^{(\varepsilon)}(x)|\,dx \,ds<\infty.  
\end{equation*}
To this end notice first that for $a,v\in\bR^M$ by Taylor's formula 
\begin{equation}                                                            \label{Jestimate}
|J^a|v|^p|\leq N(|v|^{p-2}|a|^2+|a|^p)
\end{equation}
with a constant $N=N(p)$. 
Using this and Young's inequality,    
by  \eqref{he} combined with 
$$
|u^{(\varepsilon)}_s(x)| \leq N_{\varepsilon}|u_s|_{L_p}
$$
 we get that almost surely  
$$
A_1(x)  \leq N\int_0^T
\int_Z
\left( 
|u_{s-}^{(\varepsilon)}(x)|^{p-2}
|h_s^{(\varepsilon)}(x,z)|^2+|h_s^{(\varepsilon)}(x,z)|^p 
\right)
\,\mu(dz)\,ds
$$
$$
\leq \tfrac{p-2}{p}NN^p_{\varepsilon}\int_0^T|u_{s-}|_{L_p}^p\,ds
+\tfrac{2}{p}NN^p_{\varepsilon}\int_0^T|h_s|_{L_p(\cL_2)}^p\,ds
+NN^p_{\varepsilon} \int_0^T |h_s|_{L_p(\cL_p)}^p\,ds< \infty  
$$
for all $x\in\bR^d$. 
By \eqref{Jestimate}, Young's inequality   
and Lemma \ref{lemma epsilon} 
 we have
$$
B_1  \leq N\int_0^T \int_{\mathbb{R}^d}
\int_Z
\left( 
|u_{s-}^{(\varepsilon)}|^{p-2}|h_s^{(\varepsilon)}|^2+|h_s^{(\varepsilon)}|^p 
\right)
\,\mu(dz)\,dx\,ds
$$
$$
\leq \tfrac{p-2}{p}N\int_0^T|u_{s-}|_{L_p}^p\,ds
+\tfrac{2}{p}N \int_0^T|h_s|_{L_p(\cL_2)}^p\,ds
+N \int_0^T |h_s|_{L_p(\cL_p)}^p\,ds< \infty\,\,\text{(a.s.)}.  
$$
Using \eqref{he} and 
$$
|u^{(\varepsilon)}_{s-}(x)|\leq N_{\varepsilon}|u_{s-}|_{L_p}
\quad
\text{for $s\in(0,T]$, $x\in\bR^d$, $\omega\in\Omega$}, 
$$
by \eqref{u sup estimate} we get that almost surely 
\begin{align*}
A_2(x) & \leq\sup_{t\leq T}|u_{t-}^{(\varepsilon)}(x)|^{2p-2}
\int_0^T|h_s^{(\varepsilon)}(x)|^2_{\cL_2}\,ds\\
& 
\leq N_{\varepsilon}^{2p} T^{(p-2)/p}\sup_{t\leq T}|u_{t}|^{2p-2}_{L_p}
\left(\int_0^T|h_s|_{L_p(\cL_2)}^p\,ds \right)^{2/p}<\infty\quad
\text{for all $x\in\bR^d$}.
\end{align*}
 By Young's and H\"older's inequalities and by \eqref{u sup estimate} 
\begin{align*}
B_2 & \leq q^{-1} \int_{\mathbb{R}^d}\sup_{t\leq T}|u_t^{(\varepsilon)}|^p\,dx
+p^{-1}
\int_{\mathbb{R}^d}
\left( \int_0^T|h_s^{(\varepsilon)}|^2_{\cL_2}\,ds \right)^{p/2}\,dx\\
& 
\leq q^{-1} \int_{\mathbb{R}^d}(\sup_{t\leq T}|u_t|^p)^{(\varepsilon)}\,dx+
T^{(p-2)/2}p^{-1}
\int_0^T|h_s^{(\varepsilon)}|_{L_p(\cL_2)}^p\,ds              \\
&
\leq q^{-1}
\int_{\mathbb{R}^d}\sup_{t\leq T}|u_t|^p\,dx
+
T^{(p-2)/2}p^{-1}\int_0^T|h_s|_{L_p(\cL_2)}^p\,ds 
<\infty \,\,\text{(a.s.)}  
\end{align*}
with  $q=p/(p-1)$. 
Similarly, for almost every $\omega\in\Omega$ 
$$
A_3(x)\leq N_{\varepsilon}^{2p} T^{(p-2)/p}\sup_{t\leq T}|u_{t}|^{2p-2}_{L_p}
\left(\int_0^T|g_s|_{L_p(\ell_2)}^p\,ds \right)^{2/p}<\infty\quad
\text{for all $x\in\bR^d$}, 
$$
$$
B_3\leq q^{-1}
\int_{\mathbb{R}^d}\sup_{t\leq T}|u_t|^p\,dx
+
T^{(p-2)/2}p^{-1}\int_0^T|g_t|^p_{L_p(\ell_2)}\,dt<\infty,  
$$
and 
$$
C\leq q^{-1}
\int_{\mathbb{R}^d}\sup_{t\leq T}|u_t|^p\,dx
+
T^{p-1}p^{-1}\int_0^T|f_t|^p_{L_p}\,dt<\infty. 
$$
Note that $u^{(\varepsilon)}_{t-}(x)$ is left continuous in $t$, it is continuous in $x$,  
and it is $\cF_t$-measurable for every $(t,x)$. 
Therefore $u_{t-}(x)$ is a $\cP\otimes\cB(\bR^d)$-measurable mapping of 
$(\omega,t,x)\in\Omega\times[0,T]\times\bR^d$, 
and hence it is easy to show that the integrands in \eqref{Itox} are also 
$\cP\otimes\cB(\bR^d)$-measurable functions of $(\omega,t,x)$. 

Thus integrating \eqref{Itox} over $\mathbb{R}^d$ we can use the deterministic 
Fubini theorem together with the stochastic Fubini theorems, Lemma 2.6 
from \cite{K2011} and Theorems  \ref{theorem Fubini1} and 
\ref{theorem Fubini2} above,  to get  
$$
|u^{(\varepsilon)}_{t}|_{L_p}^p= 
 |\psi^{(\varepsilon)}|_{L_p}^p
+\int_0^t\int_{\bR^d}
p|u^{(\varepsilon)}_s|^{p-2}u^{(\varepsilon)i}_sg^{(\varepsilon)ir}_s\,dx\,dw^r_s
$$
$$
+\tfrac{p}{2}\int_0^t\int_{\bR^d}
2|u_s^{(\varepsilon)}|^{p-2}u_s^{(\varepsilon)i}f_s^{(\varepsilon)i}               
+
(p-2)
|u_s^{(\varepsilon)}|^{p-4}|u_s^{(\varepsilon)i}g^{(\varepsilon)i\cdot}_s|_{l_2}^2
+|u_s^{(\varepsilon)}|^{p-2}|g_s^{(\varepsilon)}|_{l_2}^2
\,dx\,ds                                                                                                                    
$$
\begin{equation}                                                                    \label{Itoe}
+\int_0^t\int_Z\int_{\bR^d} 
p|u_{s-}^{(\varepsilon)}|^{p-2}u_{s-}^{(\varepsilon)i}h_s^{(\varepsilon)i}\,dx\,\tilde{\pi}(dz,ds)
+\int_0^t\int_Z\int_{\bR^d} 
J^{h^{(\varepsilon)}_s}|u_{s-}^{(\varepsilon)}|^p\,dx \,\pi(dz,ds)                                      
\end{equation}
almost surely for all $t\in[0,T]$. 
In order to take $\varepsilon\to0$ here, 
we need to prove
$$
A_\varepsilon:=
\int_0^T
\int_Z
\left( 
\int_{\mathbb{R}^d} |u_{s-}^{(\varepsilon)}|^{p-2}u_{s-}^{(\varepsilon)i}h_s^{(\varepsilon)i}
-|u_{s-}|^{p-2}u_{s-}^ih_s^i\,dx 
\right)^2\,\mu(dz)\,ds\to 0, 
$$
$$
B_\varepsilon:
=\int_0^T\int_Z\int_{\mathbb{R}^d}
\big|J^{h^{(\varepsilon)}_s}|u_{s-}^{(\varepsilon)}|^p-J^{h_s}|u_{s-}|^p\big| \,dx\,\pi(dz,ds)\to0  
$$
and
$$
C_{\varepsilon}:=\int_0^T
\left|
\int_{\mathbb{R}^d}
|u^{(\varepsilon)}_s|^{p-2}u^{(\varepsilon)i}_sg^{(\varepsilon)i\cdot}_s
-|u_s|^{p-2}u_s^ig_s^{i\cdot}\,dx\right|^2_{l_2}\,ds\to0
$$
in probability as $\varepsilon\rightarrow 0$.
To this end notice that $A_\varepsilon \leq A^1_\varepsilon +A^2_\varepsilon$ 
with 
$$
A_\varepsilon^1:
=\int_0^T\int_Z\left(\int_{\mathbb{R^d}}|u_{s-}^{(\varepsilon)}|^{p-1}
|h_s^{(\varepsilon)}-h_s|\,dx  \right)^2\,\mu(dz)\,ds
$$
and 
$$
A^2_\varepsilon:=
\int_0^T\int_Z\left(\int_{\mathbb{R}^d}
(|u_{s-}^{(\varepsilon)}|^{p-2}u_{s-}^{(\varepsilon)i}-|u_{s-}|^{p-2}u_{s-}^i)
h^i_s\,dx\right)^2\,\mu(dz)\,ds.
$$
By Minkowski's and H\"older inequalities 
\begin{align*}
A^1_\varepsilon 
& \leq \int_0^T
\left( \int_{\mathbb{R}^d}
\left(\int_Z
|u_{s-}^{(\varepsilon)}|^{2p-2}|h^{(\varepsilon)}_{s}-h_s|^2\,\mu(dz)
\right)^{1/2}\,dx 
\right)^2\,ds                                                                                      \\
& 
\leq 
\int_0^T|u_{s-}|^{2p-2}_{L_p}
\left(\int_{\mathbb{R}^d}|h^{(\varepsilon)}-h_s|^p_{\cL_2}\,dx\right)^{2/p}\,ds \\
& 
\leq \sup_{t\leq T}|u_{t}|^{2p-2}_{L_p}
\int_0^T|h_s^{(\varepsilon)}-h_s|^p_{L_p(\cL_2)}\,ds\rightarrow 0
\end{align*}
almost surely as $\varepsilon\rightarrow 0$,  and
\begin{align}
A^2_\varepsilon 
& \leq \int_0^T
\left(\int_{\mathbb{R}^d} 
\left( \int_Z
||u_{s-}^{(\varepsilon)}|^{p-2}u_{s-}^{(\varepsilon)}
-|u_{s-}|^{p-2}u_{s-}|^2|h_s|^2
\,\mu(dz) 
\right)^{1/2}\,dx
\right)^2\,ds                                               \nonumber\\
&
\leq 
\int_0^T
\left( \int_{\mathbb{R}^d}||u_{s-}^{(\varepsilon)}|^{p-2}
u_{s-}^{(\varepsilon)}
-|u_{s-}|^{p-2}u_{s-}||h_s|_{\cL_2}\,dx 
\right)^2\,ds                                                       \nonumber\\
& 
\leq 
\int_0^T||u_{s-}^{(\varepsilon)}|^{p-2}u_{s-}^{(\varepsilon)}
-|u_{s-}|^{p-2}u_{s-}|^2_{L_{p/(p-1)}}|h_s|^2_{L_p(\cL_2)}\,ds.                                              \label{A2}
\end{align}
Using \eqref{u sup estimate}, by Lebesgue's theorem on dominated 
convergence we can see that 
$u^{(\varepsilon)}_{s-}=(u_{s-})^{(\varepsilon)}$ for every $s\in(0,T]$ 
and $\omega\in\Omega$.  Since 
$u_{s-}\in L_p(\bR^d)$, we have 
$u^{(\varepsilon)}_{s-}\to u_{s-}$ in $L_p(\bR^d)$ as $\varepsilon\to0$. 
Hence for fixed $\omega$ and $s\in(0,T]$ there is 
a sequence $\varepsilon_k\to0$ 
such that $u^{(\varepsilon_k)}_{s-}(x)\to u_{s-}(x)$ 
for $dx$-almost every $x$, as $k\to\infty$. 
Applying Lemma \ref{lemma tool} to the sequence 
$(u^{(\varepsilon_k)}_{s-})_{k=1}^{\infty}$ in 
$V=L_p$ we get a subsequence, for simplicity 
denoted also by $(u^{(\varepsilon_k)}_{s-})_{k=1}^{\infty}$, and a function 
$v\in L_p$ such that $|u^{(\varepsilon_k)}_{s-}(x)|\leq v(x)$ for all $x\in\bR^d$ and 
all $k$. Thus 
$$
||u_{s-}^{(\varepsilon_k)}|^{p-2}u_{s-}^{(\varepsilon_k)}
-|u_{s-}|^{p-2}u_{s-}|\leq v^{p-1}+|u(s-)|^{p-1}\in L_{p/p-1},  
$$
and by Lebesgue's theorem on dominated convergence 
$$
\lim_{k\to\infty}||u_{s-}^{(\varepsilon_k)}|^{p-2}u_{s-}^{(\varepsilon_k)}
-|u_{s-}|^{p-2}u_{s-}|_{L_{p/p-1}}=0.  
$$
Since we have this for a subsequence of any sequence $\varepsilon_k\to0$, 
we have 
$$
\lim_{\varepsilon\to0}||u_{s-}^{(\varepsilon)}|^{p-2}u_{s-}^{(\varepsilon)}
-|u_{s-}|^{p-2}u_{s-}|_{L_{p/p-1}}=0\quad \text{for all $s\in(0,T]$ 
and $\omega\in\Omega$}.  
$$
By H\"older's inequality we have 
$$
||u_{s-}^{(\varepsilon)}|^{p-2}u_{s-}^{(\varepsilon)}
-|u_{s-}|^{p-2}u_{s-}|_{L_{p/(p-1)}}\leq 2|u_{s-}|_{L_p}^{p-1}\,\text{(a.s.)}. 
$$
Hence 
$$
||u_{s-}^{(\varepsilon)}|^{p-2}u_{s-}^{(\varepsilon)}
-|u_{s-}|^{p-2}u_{s-}|^2_{L_{p/(p-1)}}|h_s|^2_{L_p(\cL_2)}
\leq 2|h_s|^2_{L_p(\cL_2)}\sup_{s\leq T}|u_{s}|_{L_p}^{2p-2}, 
$$
and we can use again Lebesgue's theorem on 
dominated convergence to get that 
$\lim_{\varepsilon\to0}A^2_{\varepsilon}=0$ almost surely. 
Consequently, $\lim_{\varepsilon\to0}A_{\varepsilon}=0$ (a.s.), 
which implies
\begin{equation}                                                                       \label{piconv}
\sup_{t\leq T}\left|\int_0^t\int_Z\int_{\bR^d} 
\left(|u_{s-}^{(\varepsilon)}|^{p-2}u_{s-}^{(\varepsilon)i}h_s^{(\varepsilon)i}
-|u_{s-}|^{p-2}u_{s-}^{i}h_s^{i}\right)\,dx\,\tilde{\pi}(dz,ds)
\right|\to0
\end{equation}
in probability as $\varepsilon\to0$. 
In order to prove $B_\varepsilon\to0$, we are going to show
\begin{equation}                                                                                    \label{J conv 1}
\lim_{\varepsilon\to0}E\int_0^T\int_Z\int_{\bR^d}
\left|J^{h^{(\varepsilon)}_s}|u_{s-}^{(\varepsilon)}|^p-J^{h_s}|u_{s-}|^p\right|
\,dx\,\mu(dz)\,ds=0. 
\end{equation}
Note that by \eqref{u epsilon conv}, \eqref{h epsilon conv} and 
\eqref{f g epsilon conv}, for any sequence $\varepsilon_k\to0$ there is a subsequence, 
denoted also by $\varepsilon_k$, such that  
\begin{equation}                                                                               \label{convptxm}
J^{h^{(\varepsilon_k)}_s}|u_{s-}^{(\varepsilon_k)}|^p\to J^{h}|u_{s-}|^p
\quad
\text{in $P\otimes dt\otimes dx\otimes \mu(dz)$ as $k\to\infty$.}
\end{equation}
Thus to get \eqref{J conv 1} by virtue of Lebesgue's theorem 
on dominated convergence 
we need only show the existence of a function in 
$L_1(\Omega\times[0,T],L_1(\cL_1))$, which dominates 
the integrand in \eqref{J conv 1} for $\varepsilon=\varepsilon_k$ for all 
$k\geq1$. By \eqref{Jestimate} 
$$
J^{h_s^{(\varepsilon)}}|u_{s-}^{(\varepsilon)}|^p
\leq N\big( |u_{s-}^{(\varepsilon)}|^{p-2}
|h_s^{(\varepsilon)}|^2+|h_s^{(\varepsilon)}|^p \big)
$$
with a constant $N=N(p)$. 
Due to  \eqref{h epsilon conv} and \eqref{u epsilon conv}, 
by Lemma \ref{lemma tool}, there exist 
a sequence $\varepsilon_k\rightarrow 0$ 
and functions $v\in\bL_p$ and $H\in\bL_{p,2}$, 
such that together with \eqref{convptxm} 
$$
|u_{s-}^{(\varepsilon_k)}|\leq |v_{s}|\quad \text{and}
\quad |h_s^{(\varepsilon_k)}|\leq |H_s|
\quad \text{for all $(\omega,s,z,x)$ and $k\geq1$}  
$$
hold. Thus 
$$
|J^{h^{(\varepsilon_k)}}|u_{s-}^{(\varepsilon_k)}|^p|
\leq N(|v_s|^{p-2}|H_s|^2+|H_s|^p)\quad 
\text{for $(\omega,s,z,x)$ and $k\geq1$}. 
$$
By H\"older's and Young's inequalities, 
$$
E\int_0^T
\int_{\bR^d}\int_Z
\left(|v_s|^{p-2}|H_s|^2+|H_s|^p
\right)\,\mu(dz)\,dx\,ds
$$
$$
\leq \tfrac{p-2}{p}|v|^p_{\bL_p}
+\tfrac{2}{p}E\int_0^T|H_s|^p_{L_p(\cL_2)}\,ds
+E\int_0^T|H_s|^p_{L_p(\cL_2)}\,ds<\infty,
$$
which shows that 
$|v_s|^{p-2}|H_s|^2+|H_s|^p\in L_1(\Omega\times[0,T],L_1(\cL_1))$ 
and finishes the proof of 
\eqref{J conv 1}. Consequently, 
$$                                    
E\sup_{t\in[0,T]}
\left|\int_0^t\int_Z J^{h^{(\varepsilon)}_s}|u_{s-}^{(\varepsilon)}|^p \,\pi(dz,ds)
-\int_0^t\int_Z J^{h_s}|u_{s-}|^p \,\pi(dz,ds)\right|
$$
$$                                    
\leq E\int_0^T\int_Z \left|J^{h^{(\varepsilon)}_s}|u_{s-}^{(\varepsilon)}|^p
-J^{h_s}|u_{s-}|^p\right| \,\pi(dz,ds)
$$
\begin{equation}                                                   \label{conv3}
=E\int_0^T\int_Z
\left|J^{h^{(\varepsilon)}}|u_{s-}^{(\varepsilon)}|^p-J^{h_s}|u_{s-}|^p\right|
\,\mu(dz)\,ds\to 0\quad \text{as $\varepsilon\to0$}. 
\end{equation} 
Now we are going to show that $\lim_{\varepsilon\to0}C_{\varepsilon}=0$ 
almost surely, which implies 
\begin{equation}                                                             \label{Cconv}
\sup_{t\in[0,T]}\left|\int_0^t\int_{\mathbb{R}^d}
(|u^{(\varepsilon)}_s|^{p-2}u^{(\varepsilon)i}_sg^{(\varepsilon)ir}_s
-|u_s|^{p-2}u_s^ig^{ir}_s)\,dx\,dw^r_s\right|\to0
\end{equation} 
in probability as $\varepsilon\to0$. By Minkowski's inequality 
\begin{equation}                                                              \label{Ce}
C_{\varepsilon}\leq \int_0^T
\left(\int_{\mathbb{R}^d}\left|
|u^{(\varepsilon)}_t|^{p-2}u^{(\varepsilon)i}_tg^{(\varepsilon)i\cdot}_t
-|u_t|^{p-2}u^i_tg^{i\cdot}_t\right|_{l_2}\,dx\right)^2\,dt. 
\end{equation}
Since for fixed $t\in[0,T]$ and $\omega\in\Omega$ we have 
$|u_t^{(\varepsilon)}-u_t|_{L_p}\to0$ 
and $|g_t^{(\varepsilon)}-g_t|_{L_p}\to0$ 
as $\varepsilon\to0$,  for any sequence $\varepsilon_k\to0$ 
there exists a subsequence, denoted also by $\varepsilon_k$, 
such that almost surely
$$
\lim_{\varepsilon_k\to0}u_t^{(\varepsilon_k)}=u_t \quad \text{and}\quad 
\lim_{\varepsilon_k\to0}g_t^{(\varepsilon_k)}=g_t
\quad 
dx\text{-almost everywhere}, 
$$
and, by virtue of Lemma \ref{lemma tool} 
we have functions $v\in L_p$ and $G\in L_p$ 
such that 
$$
|u_t^{(\varepsilon_k)}|\leq v\quad\text{and}
\quad |g_t^{(\varepsilon_k)}|_{\ell_2}\leq G 
\quad\text{for all $x\in\bR^d$ for all $k\geq1$} 
$$
for the fixed $t\in[0,T]$ and $\omega\in\Omega$. Thus
$$
\left||u^{(\varepsilon_k)}_t|^{p-2}|u^{(\varepsilon_k)i}_tg^{(\varepsilon_k)i\cdot}_t
-
|u_t|^{p-2}u_t^ig_t^{i\cdot}\right|_{l_2}\leq v^{p-1}G+|u_t|^{p-1}|g_t|_{\ell_2}
$$
$$
\leq \tfrac{p-1}{p}v^{p}+\tfrac{1}{p}G^p+
\tfrac{p-1}{p}|u_t|^{p}+\tfrac{1}{p}|g_t|^p_{\ell_2}\in L_1(\bR^d,\bR)
\quad\text{for all $k\geq1$}, 
$$
and by Lebesgue's theorem on dominated convergence 
$$ 
\lim_{k\to\infty}\int_{\mathbb{R}^d}\left|
|u^{(\varepsilon_k)}_t|^{p-2}u^{(\varepsilon_k)i}_tg^{(\varepsilon_k)i\cdot}_t
-|u_t|^{p-2}u_t^ig_t^{i\cdot}\right|_{l_2}\,dx=0.  
$$
Consequently, 
$$ 
\lim_{\varepsilon\to0}\int_{\mathbb{R}^d}\left|
|u^{(\varepsilon)}_t|^{p-2}u^{(\varepsilon)i}_tg^{(\varepsilon)i\cdot}_t
-|u_t|^{p-2}u_t^ig_t^{i\cdot}\right|_{l_2}\,dx=0.  
$$
Notice that by H\"older's inequality 
$$
\left||u^{(\varepsilon)}|^{p-2}u^{(\varepsilon)i}g^{(\varepsilon)i\cdot}
-|u|^{p-2}u^ig^{i\cdot}\right|^2_{L_1}
\leq 4|g|^2_{L_p}\sup_{t\in[0,T]}|u_t|_{L_p}^{2p-2}\in L_1([0,T],\bR),  
$$
where $L_1=L_1(\bR^d,\bR)$ and $L_p=L_p(\bR^d,\bR)$. 
Thus letting $\varepsilon \to 0$ in \eqref{Ce} by Lebesgue's theorem 
on dominated convergence we get 
$\lim_{\varepsilon\to0}C_{\varepsilon}=0$. Finally we show
\begin{equation*}                             
E\int_0^T\int_{\bR^d}
\left||u_s^{(\varepsilon)}|^{p-2}u_s^{(\varepsilon)i}f_s^{(\varepsilon)i}-
|u_s|^{p-2}u_s^if_s^i
\right|\,dx\,ds\to0,
\end{equation*}
\begin{equation*}                           
E\int_0^T\int_{\bR^d}
\left||u_s^{(\varepsilon)}|^{p-4}|u_s^{(\varepsilon)i}g^{(\varepsilon)i\cdot}_s|_{l_2}^2
-|u_s|^{p-4}|u^i_sg^{i\cdot}_s|_{l_2}^2\right|
\,dx\,ds\to0, 
\end{equation*}
\begin{equation}                                                              \label{convergence}
E\int_0^T\int_{\bR^d}
\left|
|u_s^{(\varepsilon)}|^{p-2}|g_s^{(\varepsilon)}|_{\ell_2}^2 
-|u_s|^{p-2}|g_s|_{\ell_2}^2
\right|
\,dx\,ds\to0.
\end{equation}
as $\varepsilon\to0$. 
Since $|u^{(\varepsilon)}-u|_{\bL_p}\to0$, 
$|f^{(\varepsilon)}-f|_{\bL_p}\to0$ 
and $|g^{(\varepsilon)}-g|_{\bL_p}\to0$ 
as $\varepsilon\to0$, for any 
sequence $\varepsilon_k\to0$ there exist a subsequence, 
denoted also by $\varepsilon_k$ such that 
\begin{equation*}                                              
\lim_{k\to\infty}(|u^{(\varepsilon_k)}-u|+|f^{(\varepsilon_k)}-f|+
|g^{(\varepsilon_k)}-g|_{\ell_2})=0 \quad P\otimes dt\otimes dx\,(a.e.), 
\end{equation*}
and by virtue of Lemma \ref{lemma tool} there are functions ${\bf v}\in\bL_p$, 
${\bf f}\in\bL_p$ 
and ${\bf g}\in\bL_p$ such that 

$$
|u^{(\varepsilon_k)}|\leq {\bf v},\quad 
|f^{(\varepsilon_k)}|\leq {\bf f},\quad 
|g^{(\varepsilon_k)}|_{\ell_2}\leq {\bf g}
\quad 
\text{for all $(\omega,t,x)\in\Omega\times[0,T]\times\bR^d$ and $k\geq1$}.
$$
Thus by H\"older's inequality 
$$
||u^{(\varepsilon_k)}|^{p-2}u^{(\varepsilon_k)i}f^{(\varepsilon_k)i}-
|u|^{p-2}u^if^i|\leq {\bf v}^{p-1}{\bf f}+|u|^{p-1}|f|\in\bL_1,
$$
$$
||u^{(\varepsilon_k)}|^{p-4}|u^{(\varepsilon_k)i}g^{(\varepsilon_k)i\cdot}|_{l_2}^2
-|u|^{p-4}|u^ig^{i\cdot}|_{l_2}^2|
\leq {\bf v}^{p-2}{\bf g}^2+|u|^{p-2}|g|_{\ell_2}^2\in\bL_1, 
$$
$$
||u^{(\varepsilon_k)}|^{p-2}|g^{(\varepsilon_k)}|_{\ell_2}^2 
-|u|^{p-2}|g|_{\ell_2}^2|
\leq {\bf v}^{p-2}{\bf g}^2+|u|^{p-2}|g|_{\ell_2}^2\in\bL_1, 
$$
and by Lebesgue's theorem on dominated convergence we get 
\eqref{convergence} for $\varepsilon_k\to0$, and hence  
for $\varepsilon\to0$ as well. Using this together with 
\eqref{piconv}, \eqref{conv3} and \eqref{Cconv}, we obtain 
\eqref{Ito1} by letting $\varepsilon\to0$ in \eqref{Itoe}. 
\end{proof}

\begin{proof}[Proof of Theorem \ref{theorem2}]
By taking $\varphi^{(\varepsilon)}$ in place of $\varphi$ in equation \eqref{eq2}, 
we get for each $\varphi\in C_0^\infty$ 
\begin{equation}                                                                    \label{differential}
(u_t^{(\varepsilon)},\varphi)=(\psi^{(\varepsilon)},\varphi)
+\int_0^t(f_s^{(\varepsilon)},\varphi)\,ds+\int_0^t(g_s^{(\varepsilon)r},\varphi)\,dw_s^r
+\int_0^t\int_Z(h_s^{(\varepsilon)},\varphi)\,\tilde{\pi}(dz,ds)
\end{equation}
$P\otimes dt$ almost every $(\omega,t)\in\Omega\times[0,T]$, where
$$
f_s^{(\varepsilon)}:=f_s^{i(\varepsilon)}+f_s^{0(\varepsilon)}, 
$$ 
where $i$ runs through $\{1,2,...,d\}$. 
Hence by Theorem \ref{Ito1} we have an $L_p$-valued adapted cadlag 
process $\bar u^{\varepsilon}$ such that  
for each $\varphi\in C_0^{\infty}$ almost surely 
\eqref{differential} holds with $\bar u^{\varepsilon}$ in place of $u^{(\varepsilon)}$ for all 
$t\in[0,T]$. In particular, for each $\varphi\in C_0^{\infty}$ we have 
$(u^{(\varepsilon)},\varphi)=(\bar u^{\varepsilon},\varphi)$ for $P\otimes dt$-almost 
every $(\omega,t)\in\Omega\times[0,T]$. 
Thus $u^{(\varepsilon)}=\bar u^{\varepsilon}$, as $L_p$-valued functions,  
for $P\otimes dt$-almost every $(\omega,t)\in\Omega\times[0,T]$,  
and almost surely 
$$
|\bar u_t^{\varepsilon}|^p_{L_p}= 
|\psi^{(\varepsilon)}|_{L_p}^p
+p\int_0^t\int_{\mathbb{R}^d}|\bar u_s^{\varepsilon}|^{p-2}
\bar u_s^{\varepsilon}g_s^{(\varepsilon)r}\,dx\,dw^r_s                           
$$
$$
+p\int_0^t\int_{\mathbb{R}^d}
\left(|\bar u_s^{\varepsilon}|^{p-2}\bar u_s^{\varepsilon}
f^{0(\varepsilon)}_s+
|u_s^{(\varepsilon)}|^{p-2}u_s^{(\varepsilon)}D_if_s^{i(\varepsilon)}                
+\tfrac{1}{2}(p-1)
|\bar u_s^{\varepsilon}|^{p-2}|g_s^{(\varepsilon)}|^2_{l_2} \right)\,dx\,ds              
$$
$$
+p\int_0^t\int_Z\int_{\mathbb{R}^d}
|\bar u_{s-}^{\varepsilon}|^{p-2}u_{s-}^{(\varepsilon)}
h_s^{(\varepsilon)}\,dx\,\tilde{\pi}(dz,ds)                                           
+\int_0^t\int_Z\int_{\mathbb{R}^d}
J^{h^{(\varepsilon)}}|\bar u_{s-}^{\varepsilon}|^p\,dx\,\pi(dz,ds)      
$$
for all $t\in[0,T]$. Hence, using that by integration by parts  
$$
\int_{\mathbb{R}^d}
|u_{s}^{(\varepsilon)}|^{p-2}u_{s}^{(\varepsilon)}
D_if_s^{i(\varepsilon)}\,dx
=-\int_{\mathbb{R}^d}(p-1)
|\bar u_s^{\varepsilon}|^{p-2}f^{(\varepsilon)i}_sD_iu_s^{(\varepsilon)}\,dx,    
$$
for $P\otimes dt$-almost every $(\omega,t)\in\Omega\times[0,T]$ we get 
$$
|\bar u_t^{\varepsilon}|^p_{L_p}= 
|\psi^{(\varepsilon)}|_{L_p}^p
+p\int_0^t\int_{\mathbb{R}^d}|\bar u_s^{\varepsilon}|^{p-2}
\bar u_s^{\varepsilon}g_s^{(\varepsilon)r}\,dx\,dw^r_s                           
$$
$$
+p\int_0^t\int_{\mathbb{R}^d}
\left(|\bar u_s^{\varepsilon}|^{p-2}\bar u_s^{\varepsilon}
f^{0(\varepsilon)}_s-(p-1)
|\bar u_s^{\varepsilon}|^{p-2}
f_s^{i(\varepsilon)}D_i u_s^{(\varepsilon)}                  
+\tfrac{1}{2}(p-1)
|\bar u_s^{\varepsilon}|^{p-2}|g_s^{(\varepsilon)}|^2_{l_2} \right)\,dx\,ds              
$$
\begin{equation}                                                        \label{Lp Ito formula smooth}
+p\int_0^t\int_Z\int_{\mathbb{R}^d}
|\bar u_{s-}^{\varepsilon}|^{p-2}\bar u_{s-}^{\varepsilon}
h_s^{(\varepsilon)}\,dx\,\tilde{\pi}(dz,ds)                                           
+\int_0^t\int_Z\int_{\mathbb{R}^d}
J^{h^{(\varepsilon)}}|\bar u_{s-}^{\varepsilon}|^p\,dx\,\pi(dz,ds).       
\end{equation}
almost surely for all $t\in[0,T]$. By Davis', Minkowski 
and H\"older inequalities we have
\begin{align*}
&E\sup_{t\leq T}\left| \int_0^t\int_Z \int_{\mathbb{R}^d} 
 p|\bar u_{s-}^{\varepsilon}|^{p-2}\bar u_{s-}^{\varepsilon}
h_s^{(\varepsilon)}\,dx \,\tilde{\pi}(dz,ds)\right|                                          \nonumber\\
& 
\leq 3pE\left(\int_0^T\int_Z\left(\int_{\mathbb{R}^d}
|\bar u_{s-}^{\varepsilon}|^{p-2}
\bar u_{s-}^{\varepsilon}h_s^{(\varepsilon)}\,dx\right)^2
\,\mu(dz)\,ds \right)^{1/2}                                                                                     \nonumber\\
&
\leq 3pE\left(\int_0^T\left( \int_{\mathbb{R}^d}|\bar u_s^{\varepsilon}|^{p-1}      
|h_s^{(\varepsilon)}|_{\cL_2}\,dx \right)^2\,ds\right)^{1/2}                                                 
\end{align*}
\begin{equation}                                                                                              \label{sup1}
\leq 3pE\left(\int_0^T |\bar u^{\varepsilon}_s|_{L_p}^{2p-2}
|h_s^{(\varepsilon)}|^2_{L_p(\cL_2)}\, ds \right)^{1/2}                                          
\leq \frac{1}{12}\,E\sup_{t\leq T}|\bar u^{\varepsilon}_{t}|^p_{L_p}                           
+NT^{(p-2)/2}|h^{(\varepsilon)}|^p_{\bL_{p,2}},
\end{equation}
with a constant  $N=N(p,d)$. 
Similarly,
\begin{equation}                                                                               \label{sup2}
E\sup_{t\leq T}\left|\int_0^t\int_{\mathbb{R}^d}p|\bar u_s^{\varepsilon}|^{p-2}
\bar u_s^{\varepsilon}g_s^{(\varepsilon)r}\,dx\,dw^r_s\right| 
\leq
\frac{1}{12}\,E\sup_{t\leq T}|\bar u^{\varepsilon}_{t}|^p_{L_p}
+NT^{(p-2)/2}|g^{(\varepsilon)}|^p_{\bL_{p}}. 
\end{equation}
By \eqref{Jestimate} and H\"older inequality we have
$$
E\int_0^T\int_Z\int_{\mathbb{R}^d}  J^{h^{(\varepsilon)}}
| \bar u^{\varepsilon}_{s-}|^p\,dx\,\pi(dz,ds)                                                       \nonumber 
$$
$$
\leq N\,E\int_0^T\int_{\mathbb{R}^d}\int_Z\big( |\bar u_{s-}^{\varepsilon}|^{p-2}
|h_s^{(\varepsilon)}|^2+|h_s^{(\varepsilon)}|^p \big)\,\mu(dz)\,dx\,ds                
$$
\begin{equation}                                                            \label{sup3}
\leq \frac{1}{12}\,E\sup_{t\leq T}|\bar u^{\varepsilon}_{t}|^p_{L_p}                             
+N^{'}T^{(p-2)/2}
E\int_0^T|h_t^{(\varepsilon)}|^p_{L_p(\cL_2)}\,dt
+N E\int_0^T|h_t^{(\varepsilon)}|^p_{L_p(\cL_p)}\,dt, 
\end{equation}
with constants $N$ and $N^{'}$ depending only on $p$ and $d$. 
By H\"older's and Young's inequalities 
$$
pE\int_0^T\int_{\mathbb{R}^d}
|\bar u_s^{\varepsilon}|^{p-2}|\bar u_s^{\varepsilon}
f^{0(\varepsilon)}_s|\,dx\,ds
\leq \frac{1}{12}\,E\sup_{t\leq T}|\bar u^{\varepsilon}|^p_{L_p}
+NT^{p-1}|f^{0(\varepsilon)}_s|^p_{\bL_p}
$$
$$
p(p-1)E
\int_0^T\int_{\bR^d}
|\bar u_s^{\varepsilon}|^{p-2}
f_s^{i(\varepsilon)}D_i u_s^{(\varepsilon)}\,dx\,ds
$$
$$
\leq  
\frac{1}{12}\,E\sup_{t\leq T}|\bar u^{\varepsilon}|^p_{L_p}
+NT^{(p-2)/2}
\Big(
\sum_{i=1}^d|f^{i(\varepsilon)}|_{\bL_p}^p
+|Du^{(\varepsilon)}|^p_{\bL_p}
\Big)
$$
\begin{equation*}                                                       \label{sup4}            
\tfrac{1}{2}p(p-1)E
\int_0^T\int_{\bR^d}
|\bar u_s^{\varepsilon}|^{p-2}|g_s^{(\varepsilon)}|^2_{l_2}\,dx\,ds
\leq 
\frac{1}{12}\,E\sup_{t\leq T}|\bar u^{\varepsilon}_{t}|^p_{L_p}
+NT^{(p-2)/2}|g^{(\varepsilon)}|_{\bL_p}^p
 \end{equation*}
Using these inequalities together with \eqref{sup1}, \eqref{sup2} 
and \eqref{sup3}, 
from \eqref{Lp Ito formula smooth} 
we obtain  
$$        
E\sup_{t\leq T}|\bar u_t^{\varepsilon}|^p_{L_p} 
\leq 2\,E|\psi^{(\varepsilon)}|_{L_p}^p
+NE\int_0^T|h_t^{(\varepsilon)}|^p_{L_p(\cL_p)}\,dt
+NT^{p-1}
|f^{0(\varepsilon)}|^p_{\bL_p}                                                              \nonumber\\
$$
\begin{equation}                                                   \label{Ito Lp estimate smooth}
+ NT^{(p-2)/2}\big(|g^{(\varepsilon)}|^p_{\bL_p}
+E\int_0^T|h_t^{(\varepsilon)}|^p_{L_p(\cL_2)}
+\sum_{i=1}^d|f^{i(\varepsilon)}|_{\bL_p}^p+|Du^{(\varepsilon)}|^p_{\bL_p}
\big)                                                  
\end{equation}
with a constant $N=N(p,d)$. Hence  
$$
E\sup_{t\leq T}|\bar u_t^{\varepsilon}-\bar u_t^{\varepsilon'}|^p_{L_p}
\rightarrow 0
\quad
\text{as $\varepsilon,\,\varepsilon'\to0$}.  
$$ 
Consequently, there is an $L_p$-valued adapted cadlag process 
$\bar u=(\bar u_t)_{t\in[0,T]}$ 
such that 
$$
\lim_{\varepsilon\to0}E\sup_{t\leq T}|\bar u_t^{\varepsilon}-\bar u|^p_{L_p}=0. 
$$
Thus for each $\varphi\in C_0^{\infty}(\bR^d)$ we can take $\varepsilon\to0$ in 
\begin{align*}
(\bar u_t^{\varepsilon},\varphi)
& =(\psi^{(\varepsilon)},\varphi)+\int_0^t(f_s^{(\varepsilon)},\varphi)\,ds
+\int_0^t(g_s^{(\varepsilon)r},\varphi)\,dw_s^r
+\int_0^t\int_Z(h_s^{(\varepsilon)},\varphi)\,\tilde{\pi}(dz,ds)\\
&
= (\psi^{(\varepsilon)},\varphi)+\int_0^t(f_s^{0(\varepsilon)},\varphi)\,ds
-\int_0^t(f_s^{i(\varepsilon)},D_i\varphi)\,ds
+\int_0^t(g_s^{(\varepsilon)r},\varphi)\,dw_s^r\\
&
\quad +\int_0^t\int_Z(h_s^{(\varepsilon)},\varphi)\,\tilde{\pi}(dz,ds)
\end{align*}
and it is easy to see we get 
$$
(\bar u_t,\varphi)=(\psi,\varphi)+\int_0^t(f_s^\alpha,D^*_\alpha\varphi)\,ds
+\int_0^t(g_s^r,\varphi)\,dw_s^r+\int_0^t\int_Z(h_s,\varphi)\,\tilde{\pi}(dz,ds) 
$$
almost surely for all $t\in[0,T]$. Hence $\bar u= u$ for $P\otimes dt$-almost every 
$(\omega,t)\in\Omega\times[0,T]$. 
Letting $\varepsilon\to0$ 
in \eqref{Ito Lp estimate smooth}, we get estimate \eqref{Ito Lp estimate}. 
Finally letting $\varepsilon\to0$ in \eqref{Itoe}, by analogous arguments 
as in the proof  of Theorem \ref{Ito1}, we obtain \eqref{Ito2}. 
\end{proof}

\smallskip
\noindent 
 {\bf Acknowledgement.} The main results of the paper 
were presented at the conference on 
``New Directions in Stochastic Analysis: Rough Paths, SPDEs and Related Topics", 
on the occasion of Professor Terry Lyons' 65th Birthday, in Zuse Institute Berlin,  
8-22 March, 2019. The authors are grateful to the organisers for the invitation.


\begin{thebibliography}{mm}

\bibitem{A2009} D. Applebaum, {\em L\'evy Processes and Stochastic Calculus}, 
Cambridge University Press, 2009. 

\bibitem{BNV}Z. Brze\'zniak, J.M.A.M. van Neerven, M.C. Veraar, L. Weis: 
Ito's formula in UMD Banach spaces
and regularity of solutions of the Zakai equation,  {\em J. Differ. Equ.}, 245 (2008), 30-58. 

\bibitem{DM1982}C. Dellacherie and P-A. Meyer, {\em Probability and Potential B: 
Theory of Martingales}, North-Holland Mathematics Studies 72, 
Elsevier Science Ltd,  1982. 
\bibitem{IW2011} N. Ikeda and S. Watanabe, 
{\em Stochastic differential equations and diffusion processes}, North-Holland, 2011.

\bibitem{MP3} R. Mikulevi\v{c}ius and H. Pragarauskas, 
On $L_p$-estimates of singular integrals related 
to jump processes, {\em SIAM J. Math. Anal.}, 
Vol. 44, No. 4 (2012), 2305--2328. 

\bibitem{GK1982}
I.~Gy\"ongy and N.~V. Krylov.
\newblock{On Stochastic Equations with Respect to Semimartingales II. 
It\^o Formula in Banach Spaces.}
\newblock{ \em Stochastics}, 6:153--173, 1982.

\bibitem{GS2017} I. Gy\"ongy and D.  \v{S}i\v{s}ka, 
It\^o Formula for Processes Taking Values in Intersection of Finitely Many Banach Spaces, 
{\em Stochastic Partial Differential Equations: Analysis and Computations}, 5 (2017), no. 3, 
428--455. 

\bibitem{GW2019} I. Gy\"ongy and S. Wu, On Solvability 
of Stochastic Integro-Differential equations in $L_p$ spaces, preprint 2019. 
\bibitem{K2011}N. V. Krylov, On the It\^o-Wentzell formula for distribution-valued
processes and related topics, {\em Probab. Theory Relat. Fields},  150 (2011),  295-319. 

\bibitem{K2010} N.V. Krylov, It\^o's formula for the $L_p$-norm of stochastic 
$W_ p^1$-valued processes, {\em Probab. Theory Relat. Fields}, 147 (2010), 583--605. 


\bibitem{KR1981}
N.~V. Krylov and B.~L. Rozovskii.
\newblock {Stochastic evolution equations.}
\newblock {\em J. Sov. Math.}, 16:1233--1277, 1981.

\bibitem{K2013}
N.~V. Krylov,  A relatively short proof of It\^o's formula for SPDEs
and its applications, 
{\em Stochastic Partial Differential Equations: Analysis and Computations}, 
1 (2013), 152--174. 

\bibitem{P1975} E. Pardoux, {\em \'Equations aux d\'erive\'es partielles stochastiques 
non lin\'earies monotones. \'Etude des
solution forte de type It\^o}. Th\`ese Universit\'e de Paris Sud, Orsay 1975.

\bibitem{Pr2005}P.E. Protter, {\em Stochastic integration and differential equations}, 
Springer 2005. 

\bibitem{PR} C. Pr\'ev\^ot and M. R\"ockner, 
{\em A concise course on stochastic partial differential equations}.  
Lecture Notes in Mathematics, 1905. Springer, Berlin, 2007. 

\bibitem{Ro} B.L. Rozovskii, {\em Stochastic Evolution Systems. 
Linear Theory and Applications to Nonlinear
Filtering\/}, Kluwer, Dordrecht (1990).

\bibitem{R2006}B. R\"udiger,
It\^o formula for stochastic integrals w.r.t.
compensated Poisson random measures on
separable Banach spaces, {\em Stochastics: 
An International Journal of Probability and Stochastics Processes},
Vol. 78, No. 6, December 2006, 377--410. 

\bibitem{VV} J.M.A.M. van Neerven, M. Veraar: On the stochastic Fubini theorem in infinite dimensions. 
In: Stochastic
Partial Differential Equations and Applications--VII. Volume 245 of Lect. Notes Pure Appl.
Math., pp. 323--336. Chapman \& Hall/CRC, Boca Raton, FL 2006. 

\end{thebibliography}
\end{document}